%%%%%%%%%%%%%%%%%%%%%%%%%%%%%%%%%%%%%%%%%%%
%
%%%%%%%%%%%%%%%%%%%%%%%%%%%%%%%%%%%%%%%%%%%
\documentclass[12pt]{amsart}

\usepackage{graphicx}
\usepackage{amsmath}
\usepackage{amscd}
\usepackage{amsfonts}
\usepackage{amssymb}
\usepackage[dvipsnames]{xcolor}
\usepackage{fullpage}
\usepackage{mathtools}
\usepackage{graphicx}
\usepackage{caption}
\usepackage{subcaption}
\usepackage{verbatim}
\usepackage[pagebackref,hypertexnames=false, colorlinks, citecolor=red,linkcolor=blue, urlcolor=red]{hyperref}

\newcommand\blue{\color{blue}}

\numberwithin{equation}{section}

\setcounter{MaxMatrixCols}{30}
\setcounter{secnumdepth}{2}
\setcounter{tocdepth}{1}

\theoremstyle{definition}
\newtheorem{theorem}[equation]{Theorem}
\newtheorem{lemma}[equation]{Lemma}
\newtheorem{proposition}[equation]{Proposition}

\newtheorem{corollary}[equation]{Corollary}
\newtheorem{problem}[equation]{Problem}
\newtheorem{definition}[equation]{Definition}
\newtheorem{example}[equation]{Example}
\newtheorem{remark}[equation]{Remark}

%\newenvironment{proof}{\bf Proof. \rm}{$\Box$}
%      Proof environment

%

\newcommand{\be}{\begin{equation}}
\newcommand{\ee}{\end{equation}}
\newcommand{\bes}{\begin{equation*}}
\newcommand{\ees}{\end{equation*}}
\newcommand{\ba}{\begin{aligned}}
\newcommand{\ea}{\end{aligned}}

\newcommand{\cH}{H}
\newcommand{\cK}{\mathcal{K}}

\newcommand{\cB}{\mathcal{B}}

\newcommand{\cW}{\mathcal{W}}

%      Text used in equations

\newcommand{\Md}{\mathcal{M}^d}

\newcommand{\bC}{\mathbb{C}}
\newcommand{\bD}{\mathbb{D}}

\newcommand{\bR}{\mathbb{R}}
\newcommand{\bS}{\mathbb{S}}

\newcommand{\bZ}{\mathbb{Z}}

\newcommand{\Wmin}[1]{\cW^{\text{min}}_{#1}}
\newcommand{\Wmax}[1]{\cW^{\text{max}}_{#1}}

\newcommand{\Diam}{\mbox{\larger\larger\larger$\diamond$}}
\newcommand{\USD}[1]{\mathcal{U}(#1)}

\newcommand\queseq{\stackrel{\mathclap{\normalfont\mbox{?}}}{=}}

% Closed ball with p and d parameters

% Closed positive ball with p and d parameters

% Closed Euclidean ball with d parameter
\newcommand{\eucball}[1]{\overline{\mathbb{B}^{#1}}}

\newcommand{\ol}{\overline}

%%%%%%%%%%%%%%%%%%%%%%%%%%%%%%%%%%%%%%%
\begin{document}

\title{Shape, Scale, and Minimality of Matrix Ranges}
\date{\today}

\author[Passer]{$\text{Benjamin Passer}^{\dag}$}
\thanks{$\dag$ Partially supported by a Zuckerman Fellowship at the Technion.}
\address{Faculty of Mathematics\\
Technion-Israel Institute of Technology\\
Haifa\\
Israel}
\email{benjaminpas@technion.ac.il}

\subjclass[2010]{47A20, 47A13, 46L07, 47L25}
\keywords{matrix convex set; dilation; operator system;
matrix range}
%%%%%%%%%%%%%%%%%%%%%%%%
\begin{abstract}
We study containment and uniqueness problems concerning matrix convex sets. First, to what extent is a matrix convex set determined by its first level? Our results in this direction quantify the disparity between two product operations, namely the product of the smallest matrix convex sets over $K_i \subseteq \mathbb{C}^d$, and the smallest matrix convex set over the product of $K_i$. Second, if a matrix convex set is given as the matrix range of an operator tuple $T$, when is $T$ determined uniquely? We provide counterexamples to results in the literature, showing that a compact tuple meeting a minimality condition need not be determined uniquely, even if its matrix range is a particularly friendly set. Finally, our results may be used to improve dilation scales, such as the norm bound on the dilation of (non self-adjoint) contractions to commuting normal operators, both concretely and abstractly.
\end{abstract}

\maketitle

%%%%%%%%%%%%%%%%%%%%%%%%%
\section{Introduction}

The noncommutative generalization of a function system is called an operator system, and many crucial objects in the study of function systems also generalize to the noncommutative setting \cite{arvesonhyp, hamana}.

\begin{definition}
An \textit{operator system} is a self-adjoint, unital subspace $S$ of a unital $C^*$-algebra $\mathcal{A}$.  If $\mathcal{A} = C(X)$ is commutative, then $S$ is also called a \textit{function system} on $X$.
\end{definition}

Any unital $C^*$-algebra is spanned by its positive (or more precisely, positive semidefinite) elements, which by definition are self-adjoint elements $a \in \mathcal{A}$ whose spectra are contained in the nonnegative real line. We write $a \geq 0$ when $a$ is positive, noting that positivity of $a$ is equivalent to the claim that a factorization $a = bb^*$ exists for some $b \in \mathcal{A}$. While an operator system $S \subseteq \mathcal{A}$ might not have a multiplicative structure of its own, by considering the given $C^*$-algebra in which $S$ lives, one may point out the set of positive elements in $S$. In particular, $S$ is also spanned by its positive elements, which make up a crucial part of the operator system structure, as in the abstract definition found in \cite{choieffros}.

The above discussion of positivity applies equally well to the set of $n \times n$ matrices over $S$, as $M_n(S)$ embeds into the unital $C^*$-algebra $M_n(\mathcal{A})$. Further, to any map $\phi: S \to T$ between operator systems, one may also produce maps $\phi^{(n)}: M_n(S) \to M_n(T)$ which apply $\phi$ entrywise. The relevant notion of morphism between operator systems is a map which respects all of the above structure, on every matrix level, as in the following definition.

\begin{definition}
Let $S \subseteq \mathcal{A}$ and $T \subseteq \mathcal{B}$ be operator systems. Then a linear map $\phi: S \to T$ is a \textit{unital completely positive} map, or \textit{UCP} map, if $\phi(1) = 1$ and each $\phi^{(n)}$ is positive -- for any matrix $s \in M_n(S)$ such that $s \geq 0$, it follows that $\phi^{(n)}(s) \geq 0$.
\end{definition}

Any unital $C^*$-algebra $\mathcal{A}$ is by default an operator system, and Arveson's extension theorem \cite[Theorem 1.2.3]{arvesonsubI} implies that for an operator system $S \subseteq \mathcal{A}$, any UCP map $S \to \cB(H)$ extends to a UCP map $\mathcal{A} \to \cB(H)$. Therefore, when studying the interpolation problem for UCP maps into $\cB(H)$, the choice of domain is generally not important. (In contrast, it is often of great interest if an extension of a UCP map given by Arveson's extension theorem is \textit{unique}, and this problem is certainly domain-sensitive. See, for example, the role of unique extensions in Arveson's Hyperrigidity Conjecture \cite[Conjecture 4.3]{arvesonhyp}.) The interpolation problem for UCP maps reduces to consideration of the matrix range, defined below.

\begin{definition}
The matrix range of $A = (A_1, \ldots, A_d) \in \cB(H)^d$, denoted $\mathcal{W}(A) = \bigcup\limits_{n=1}^\infty \mathcal{W}_n(A)$, is a subset of the matrix universe $\Md = \bigcup\limits_{n=1}^\infty M_n^d$ defined on each $n \times n$ level by
\bes
\mathcal{W}_n(A) := \{(Q_1, \ldots, Q_d) \in M_n^d: \exists \textrm{ UCP map } \psi: \cB(H) \to M_n \textrm{ with } \psi(A_i) = Q_i\}.
\ees
\end{definition}

From a slight reworking of \cite[Theorem 2.4.2]{arvesonsubII} in \cite[Theorem 5.1]{DDSS}, if $A \in \cB(H)^d$ and $B \in \cB(K)^d$, then a UCP map $\phi: \cB(H) \to \cB(K)$ mapping $\phi(A_i) = B_i$ exists precisely if $\mathcal{W}(B) \subseteq \mathcal{W}(A)$. Thus, the interpolation problem for UCP maps reduces to the consideration of (all) UCP maps whose codomains are finite-dimensional. For any $A$, the matrix range $\mathcal{W}(A)$ is a closed and bounded matrix convex set \cite[Proposition 2.5]{DDSS}. More precisely, each $\mathcal{W}_n(A)$ is closed as a subset of $M_n^d$ with the product norm topology, and there is a uniform bound (independent of $n$) on the norm of any member of any tuple belonging to $\mathcal{W}_n(A)$. Matrix convexity is defined as follows.

\begin{definition}
A set $\mathcal{S} = \bigcup\limits_{n=1}^\infty \mathcal{S}_n \subseteq \Md$ is \textit{matrix convex} if it is closed under the application of direct sums and UCP maps. That is, $\mathcal{S}$ meets the following conditions.
\begin{itemize}
\item If $X \in \mathcal{S}_n$ and $Y \in \mathcal{S}_m$, then $X \oplus Y \in \mathcal{S}_{n+m}$.
\item If $X \in \mathcal{S}_n$ and $\phi: M_n \to M_m$ is a UCP map, then $(\phi(X_1), \ldots, \phi(X_d)) \in \mathcal{S}_m$.
\end{itemize}
\end{definition}

In fact, operator systems and matrix convex sets are dual to each other \cite{effroswinkler}. From Choi's theorem \cite{choiUCP}, which characterizes completely positive maps between matrix algebras, an equivalent definition of matrix convexity follows.

\begin{definition}
Let $\mathcal{S}$ be a subset of $\Md$. For each $i \in \{1, \ldots, N\}$, let $Y^i \in \mathcal{S}_{n_i}$ and let $V_i: \bC^n \to \bC^{n_i}$ be a linear map, such that $\sum\limits_{i=1}^N V_i^* V_i = I_n$. Then 
\be\label{eq:mccdef}
X := \sum_{i=1}^N V_i^* Y^i V_i
\ee
is called a \textit{matrix convex combination} of $Y^1, \ldots, Y^N \in \mathcal{S}$.
\end{definition}

\begin{definition}
A set $\mathcal{S} \subseteq \Md$ is \textit{matrix convex} if whenever $X$ is a matrix convex combination of $Y^1, \ldots, Y^N \in \mathcal{S}$, it follows that $X \in \mathcal{S}$.
\end{definition}

Note in particular that if $X \in \mathcal{S}_n$ and $Y \in \mathcal{S}_m$, then $X \oplus Y = V_1^* X V_1 + V_2^* Y V_2$ for a natural choice of coisometries $V_1$ and $V_2$, so $X \oplus Y$ is a matrix convex combination of $X$ and $Y$. Further, any unitary conjugation $U^* X U$ is a matrix convex combination of $X$ that uses only one summand.

It is of great interest what the proper notion of extreme point should be in the matrix convex setting, analogous to the Krein-Milman theorem (as well as Milman's converse) and the Minkowski/Steinitz theorem in the compact convex setting \cite{kreinmilman, steinitz}. Two major candidates are \textit{matrix extreme points} and \textit{absolute extreme points}, and both have been characterized in dilation-theoretic terms \cite[Theorem 1.1]{evertheltonext}. However, both candidates have limitations. If $\mathcal{S}$ is a closed and bounded matrix convex set, then $\mathcal{S}$ is generated by its matrix extreme points, in the sense that the smallest closed matrix convex set containing these points is equal to $\mathcal{S}$. However, it is possible for a matrix extreme point to be a nontrivial matrix convex combination of other matrix extreme points, and it is not known if there is a smaller generating set. While the definition of an absolute extreme point forces its representation as a matrix convex combination to be essentially unique, it is possible for $\mathcal{S}$ to have no absolute extreme points at all \cite[Corollary 1.1]{evertabs}. 

Given a compact convex set $K \subseteq \bC^d$, there might be many matrix convex sets $\mathcal{S}$ such that $\mathcal{S}_1 = K$, but there is always a smallest and largest choice of $\mathcal{S}$ \cite[Definition 4.1 and Proposition 4.3]{DDSS}. They may be presented in multiple equivalent ways:
\be\label{eq:wmindef} \ba
\Wmin{}(K) 	&= \{X \in \Md: \textrm{there is a normal dilation } N \textrm{ of } X \textrm{ with } \sigma(N) \subseteq K\}\\
		&= \{X \in \Md: \textrm{there is a normal matrix dilation } N \textrm{ of } X \textrm{ with } \sigma(N) \subseteq K\} \\
\ea \ee
and
\be\label{eq:wmaxdef} \ba
\Wmax{}(K)	&= \left\{X  \in \Md: \text{ if } \textrm{Re}\left( \sum\limits_{j=1}^d a_j x_j \right) \leq b \textrm{ for all } x \in K, \textrm{ then } \textrm{Re}\left( \sum\limits_{j=1}^d a_j X_j \right) \leq bI \right\} \\
		&=  \{X  \in \Md: \mathcal{W}_1(X) \subseteq K \}.
\ea \ee
We remind the reader that when a tuple $N = (N_1, \ldots, N_d)$ is called \textit{normal}, this means that the operators $N_1, \ldots, N_d$ are normal and commute with each other. Further, $Y \in \cB(K)^d$ is a \textit{dilation} of $X \in \cB(H)^d$ if there exists an isometry $V: H \to K$ such that $X_i = V^* Y_i V$ for each $i$. Equivalently, $X$ is a \textit{compression} of $Y$.

There is a considerable amount of information buried in the previous definitions. For example, as $\Wmin{}(K)$ is by definition the smallest matrix convex set spanned by the scalar set $K$, this spanning property does not include a closure operation. However, the second formulation of $\Wmin{}(K)$ (with added dimension bounds) shows it is actually closed. More detail from \cite{DDSS} and \cite{PSS} is given below, and the following proposition may be seen as a manipulation of the Stinespring dilation procedure \cite{stinespring}.
\begin{proposition}\label{prop:minWrange}(\cite[Corollary 2.8 and Proposition 4.4]{DDSS})
If $N \in \cB(H)^d$ is a normal tuple, then $\mathcal{W}(N) = \Wmin{}(K)$, where $K$ is the convex hull of $\sigma(N)$.
\end{proposition}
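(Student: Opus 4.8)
The plan is to prove the two inclusions $\mathcal{W}(N) \subseteq \Wmin{}(K)$ and $\Wmin{}(K) \subseteq \mathcal{W}(N)$ separately: the first by running the Stinespring construction, and the second by computing the first level of $\mathcal{W}(N)$ and invoking the minimality of $\Wmin{}(K)$.

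For $\mathcal{W}(N) \subseteq \Wmin{}(K)$, I would fix $X \in \mathcal{W}_n(N)$ and a UCP map $\psi \colon \cB(H) \to M_n$ with $\psi(N_i) = X_i$, and dilate via Stinespring: there are a Hilbert space $\widetilde H$, a unital $*$-representation $\pi \colon \cB(H) \to \cB(\widetilde H)$, and an isometry $V \colon \bC^n \to \widetilde H$ with $\psi(\cdot) = V^* \pi(\cdot) V$. Setting $M_i := \pi(N_i)$, the tuple $M = (M_1, \ldots, M_d)$ consists of commuting normal operators (images of commuting normals under a $*$-homomorphism) and dilates $X$, so the only point to check is $\sigma(M) \subseteq K$. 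For this I would restrict $\pi$ to the commutative unital $C^*$-algebra $\cA_0 = C^*(N_1, \ldots, N_d, I) \cong C(\sigma(N))$; its image is $C^*(M_1, \ldots, M_d, I)$, which is a quotient of $C(\sigma(N))$ and hence isomorphic to $C(F)$ for a closed subset $F \subseteq \sigma(N)$, with the $N_i$ and $M_i$ corresponding to coordinate functions. Thus $\sigma(M) = F \subseteq \sigma(N) \subseteq K$, and the first description of $\Wmin{}(K)$ in \eqref{eq:wmindef} gives $X \in \Wmin{}(K)$.

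For $\Wmin{}(K) \subseteq \mathcal{W}(N)$, I would use that $\mathcal{W}(N)$ is matrix convex and that $\Wmin{}(K)$ is the smallest matrix convex set whose first level contains $K$; hence it suffices to show $K \subseteq \mathcal{W}_1(N)$. Each $\lambda \in \sigma(N)$ equals $(\chi(N_1), \ldots, \chi(N_d))$ for a character $\chi$ of $\cA_0$, and $\chi$ is a state on $\cA_0$ that extends, by Hahn--Banach together with positivity, to a state $\psi$ on $\cB(H)$; reading $\psi$ as a UCP map into $M_1$ shows $\lambda \in \mathcal{W}_1(N)$. Since $\mathcal{W}_1(N)$ is convex, it contains $\conv(\sigma(N)) = K$. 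Combining the two inclusions yields $\mathcal{W}(N) = \Wmin{}(K)$ (and, incidentally, $\mathcal{W}_1(N) = K$).

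I expect the only real friction to be the spectral bookkeeping in the first inclusion, namely making precise that passing a commuting normal tuple through a unital $*$-representation cannot enlarge its joint spectrum; this is handled by the identification $C^*(N, I) \cong C(\sigma(N))$ together with the correspondence between quotients of $C(\Omega)$ and closed subsets of $\Omega$. Everything else — Stinespring's theorem, the state-extension step, and convexity of the first level — is routine.
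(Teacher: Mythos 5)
Your argument is correct, and it is essentially the intended one: the paper states this result without proof (citing \cite{DDSS}) and explicitly describes it as ``a manipulation of the Stinespring dilation procedure,'' which is exactly your first inclusion, while your second inclusion (extend characters of $C^*(N,I)$ to states of $\cB(H)$, then invoke convexity of $\mathcal{W}_1(N)$ and minimality of $\Wmin{}(K)$) is the standard complementary step. Nothing further is needed.
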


Since every compact convex set $K \subseteq \bC^d$ may be written as the convex hull of $\sigma(N)$ for a diagonal operator $N$, it follows that any $\Wmin{}(K)$ may be written as a matrix range $\mathcal{W}(N)$, and it is therefore also closed and bounded. Alternatively, a compactness argument can be used, as if $T \in \Wmin{n}(K)$, then there is a normal tuple of \textit{matrices} with a fixed dimension bound (depending on $n$) which dilates $T$ and has spectrum in $K$. 

\begin{proposition}\label{prop:introcara}(reformulation of \cite[Theorem 7.1]{DDSS} and \cite[Proposition 2.3]{PSS})
 If $T \in \cB(H)^d$ has $\mathcal{W}(T) \subseteq \Wmin{}(K)$, then there is a normal dilation $N$ of $T$ with $\sigma(N) \subseteq \overline{\text{ext}(K)}$. If, in addition, $T$ acts on a finite-dimensional space of dimension $n$, then we may choose $N$ to act on a space of dimension $2n^3(d+1) +1$ or lower.
\end{proposition}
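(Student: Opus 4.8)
The plan is to realize $T$ in both cases as a compression of a \emph{diagonal} (hence commuting normal) tuple whose spectrum lies in $\overline{\text{ext}(K)}$, obtained by a Naimark-type dilation of a finitely supported positive-operator-valued resolution of the identity ``supported on'' $\overline{\text{ext}(K)}$. One preliminary point makes the passage to extreme points clean: by the Krein--Milman theorem $K=\overline{\conv}(\text{ext}(K))$, and since $\overline{\text{ext}(K)}$ is a compact subset of $\bC^d$ its convex hull is already compact, so in fact $\conv(\overline{\text{ext}(K)})=K$. Hence whenever an element of $\Md$ is written as a matrix convex combination of scalar points of $K$, substituting each such point by a finite convex combination of points of $\overline{\text{ext}(K)}$ (and absorbing the convex weights into the $V_i$'s) rewrites it as a matrix convex combination of points of $\overline{\text{ext}(K)}$.

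For the first assertion (general $H$), I would pick a diagonal normal tuple $N_0$ whose diagonal entries form a dense subset of $\overline{\text{ext}(K)}$, so that $\sigma(N_0)=\overline{\text{ext}(K)}$ and, by Proposition~\ref{prop:minWrange}, $\mathcal{W}(N_0)=\Wmin{}(\conv\sigma(N_0))=\Wmin{}(K)$. From $\mathcal{W}(T)\subseteq\mathcal{W}(N_0)$ and \cite[Theorem~5.1]{DDSS} there is a UCP map $\phi$ with $\phi((N_0)_i)=T_i$. Restricting $\phi$ to the unital commutative $C^*$-algebra generated by $N_0$, which is $*$-isomorphic to $C(\overline{\text{ext}(K)})$, and applying Stinespring's theorem gives $\phi(\cdot)=V^*\pi(\cdot)V$ for a unital $*$-representation $\pi$ of $C(\overline{\text{ext}(K)})$ and an isometry $V$. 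Then $N:=(\pi(z_1),\dots,\pi(z_d))$, the images of the coordinate functions, is a commuting normal tuple whose joint spectrum is contained in $\overline{\text{ext}(K)}$ (a representation of $C(X)$ sends the coordinate tuple to a tuple with joint spectrum inside $X$), and $V^*N_iV=\phi((N_0)_i)=T_i$, so $N$ is the desired normal dilation.

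For the finite-dimensional assertion, note first that $T\in\mathcal{W}_n(T)\subseteq\Wmin{n}(K)$ via the identity UCP map, so by the very definition of $\Wmin{}(K)$ as the smallest matrix convex set spanned by $K$, the tuple $T$ is a \emph{finite} matrix convex combination of scalar points of $K$; by the preliminary observation we may take these points in $\overline{\text{ext}(K)}$. Unwinding the definition of matrix convex combination with $1\times1$ summands, this reads $T_i=\sum_k\lambda_k^{(i)}v_kv_k^{*}$ with $\lambda_k\in\overline{\text{ext}(K)}$, $\sum_k v_kv_k^{*}=I_n$, and each $v_kv_k^{*}$ of rank at most one. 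To control the number of terms, observe that $\tfrac1n(T_1,\dots,T_d,I_n)$ lies in the convex hull of the compact set $\{(\lambda^{(1)}\xi\xi^{*},\dots,\lambda^{(d)}\xi\xi^{*},\xi\xi^{*}):\lambda\in\overline{\text{ext}(K)},\ \|\xi\|=1\}$ inside the real vector space $M_n^d\oplus(M_n)_{sa}$, which has real dimension $(2d+1)n^2$; Carath\'eodory's theorem (and the fact that the last coordinate always has trace one) lets us rewrite it with at most $(2d+1)n^2+1$ terms, which is at most $2n^3(d+1)+1$ since $(2d+1)\le 2n(d+1)$ for $n\ge 1$. Finally, with the rank-at-most-one positive matrices $P_k:=v_kv_k^{*}$, let $N$ be the diagonal tuple on $\bigoplus_k\operatorname{ran}(P_k)$ acting as $\lambda_k^{(i)}$ times the identity on the $k$-th summand, and let $V\xi:=\bigoplus_kP_k^{1/2}\xi$; then $V$ is an isometry ($V^*V=\sum_kP_k=I_n$), $N$ is a commuting normal tuple with $\sigma(N)=\{\lambda_k\}\subseteq\overline{\text{ext}(K)}$, $V^*N_iV=\sum_k\lambda_k^{(i)}P_k=T_i$, and the dimension of the ambient space is at most the number of terms, hence at most $2n^3(d+1)+1$.

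The step I expect to be the main obstacle is the bookkeeping: carrying out the reduction to $\overline{\text{ext}(K)}$ with only finitely many terms and with no hidden closure (the identity $\conv(\overline{\text{ext}(K)})=K$), and then threading the Carath\'eodory count together with the Naimark construction so that the total dimension stays within $2n^3(d+1)+1$. The feature that makes the estimate go through is that the building blocks $v_kv_k^{*}$ have rank at most one, so the Naimark dilation space is no larger than the number of terms; without this the count would degrade. By contrast, once $N_0$ is chosen with $\sigma(N_0)=\overline{\text{ext}(K)}$, the Stinespring argument for the general case is routine.
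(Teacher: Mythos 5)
Your proof is correct, and since the paper itself gives no proof of this proposition (it is quoted from \cite[Theorem 7.1]{DDSS} and \cite[Proposition 2.3]{PSS}), what you have written is essentially a faithful reconstruction of the standard arguments behind those citations: Stinespring applied to a UCP map out of $C(\overline{\text{ext}(K)})$ for the general case, and a Carath\'eodory count on a rank-one decomposition $T_i = \sum_k \lambda_k^{(i)} P_k$ for the finite-dimensional bound. Note that your count actually yields the sharper dimension bound $(2d+1)n^2+1 \leq 2n^3(d+1)+1$.
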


Note that since $T$ above need not act on a finite-dimensional space, Proposition \ref{prop:introcara} shows that the existence of normal dilations for a family of matrices may be used to produce a normal dilation of an infinite-dimensional operator. In the background of this claim lies the fact that UCP maps (and hence Stinespring dilations) behave very well with respect to limits in pointwise topologies.

For most of the problems we pursue, there is no harm in considering a tuple $(T_1, \ldots, T_d)$ of $d$ operators as a tuple $(X_1, Y_1, \ldots, X_d, Y_d)$ of $2d$ self-adjoint operators instead. In particular, this notational change does not affect the definitions of $\mathcal{W}(T)$, $\Wmin{}(K)$, and $\Wmax{}(K)$. Therefore, whenever it is possible, we will restrict proofs to the self-adjoint setting $\cB(H)^d_{sa}$ and assume that any scalar tuples we consider belong to $\bR^d$. We also note that it is possible for $\Wmax{}(K)$ and $\Wmin{}(K)$ to be equal, and this occurs if and only if $K$ is a simplex. More specifically, for a compact convex set $K \subseteq \bR^d$, 
\be\label{eq:simplex2tod-1}
\Wmax{}(K) = \Wmin{}(K) \iff \Wmax{2^{d-1}}(K) = \Wmin{2^{d-1}}(K) \iff K \textrm{ is a simplex }
\ee
holds from \cite[Theorem 4.1]{PSS}. See also \cite[Theorem 4.7]{FNT} for the equivalence of the first and third items when $K$ is a polyhedron, phrased in the language of operator systems. 

If $T \in \cB(H)^d_{sa}$ and $A$ is an \textit{invertible} affine transformation on $\bR^d$, then
\be\label{eq:affine}
\mathcal{W}(A(T)) = A(\mathcal{W}(T)),
\ee
where $A$ is applied to operator tuples in the natural way, as in \cite[\S 3]{PSS}. This also implies that for any compact convex set $K \subseteq \bR^d$,
\be\label{eq:affineminmax}
\Wmin{}(A(K)) = A(\Wmin{}(K)) \hspace{.4 in} \text{and} \hspace{.4 in} \Wmax{}(A(K)) = A(\Wmax{}(K)).
\ee
 Similarly, if $H \subseteq \bR^d$ is an affine subspace with orthogonal projection $P_H: \bR^d \to H$, then 
\be\label{eq:proj}
\Wmax{}(K) \subseteq \Wmin{}(L) \implies \Wmax{}(K \cap H) \subseteq \Wmin{}(P_H(L))
\ee
by \cite[Lemma 3.2]{PSS}. When our computations take place entirely in a proper affine subspace of $\bR^d$, we may then use (\ref{eq:affine}), (\ref{eq:affineminmax}), and (\ref{eq:proj}) to reduce the ambient space to $\bR^{n}$ for $n < d$. This allows us to prove results for all compact convex sets by focusing only on convex bodies (compact convex sets with nonempty interior). Roughly speaking, this corresponds to throwing out useless $0$ operators in a tuple $(T_1, \ldots, T_d, 0, \ldots, 0)$ to focus on $(T_1, \ldots, T_d)$ alone.

If one considers the graded product
\bes
\prod_{j=1}^N \mathcal{S}^j := \bigcup_{n=1}^\infty \prod_{j=1}^N \mathcal{S}^j_n
\ees
of matrix convex sets, then it is evident from (\ref{eq:wmaxdef}) that $\Wmax{}\left( \prod\limits_{j=1}^N K_i \right) = \prod\limits_{j=1}^N \Wmax{}(K_i)$. However, such a factorization generally does not exist for $\Wmin{}$. The root of the problem is that if $(A_1, \ldots, A_d)$ and $(B_1, \ldots, B_d)$ are normal tuples, then $(A_1, \ldots, A_d, B_1, \ldots, B_d)$ might fail to be normal, as the various $A_i$ and $B_j$ might not commute. In section \ref{sec:prodmin} we consider containment problems of the form
\bes
\prod_{j=1}^n \Wmin{}(K_i) \subseteq  \Wmin{}\left( \prod_{j=1}^N c_i \cdot K_i \right).
\ees
For symmetric $K_i$, it is possible to derive such containments from estimates concerning products of simplices and products of diamonds, as in Theorem \ref{thm:symmetricSDconsequences}. Since the matrix convex set $\mathcal{S}$ consisting of all $d$-tuples of matrix contractions may be written as $\prod\limits_{j=1}^d \Wmin{}(\overline{\bD})$, we can then obtain a dilation scale result in Corollary \ref{cor:contractionSDdilation} for tuples of contractions (see also Theorem \ref{thm:contraction_exp_dilation}). In contrast, for $K_i$ which are not necessarily symmetric, we show in Corollary \ref{cor:prodpossimplex} that estimates derived from a very slight modification of dilations in \cite{DDSS} cannot be improved.

In section \ref{sec:compmin}, we first consider the interaction between dilation theorems and compactness of operators. While we cannot guarantee that compactness is preserved in a dilation that comes from Proposition \ref{prop:introcara}, it does hold from Proposition \ref{prop:approxcompactdilation} that if $T$ is compact and $\mathcal{W}(T) \subseteq \Wmin{}(K)$, then there is a compact normal dilation $N$ of $T$ with spectrum in a neighborhood of $K$. Moreover, if a compact tuple $T$ has $\mathcal{W}(T) = \Wmin{}(K)$, then $K$ has the shape one would expect if it were assumed that $T$ is also normal, as in Theorem \ref{thm:compact_eigenvector_hunt}.  However, unlike in the finite-dimensional setting, the assumption that $T$ is minimal for its matrix range does not characterize $T$ up to unitary equivalence, and a minimal compact $T$ with $\mathcal{W}(T) = \Wmin{}(K)$ need not be normal. In particular, for many $K$ there exist uncountably many inequivalent, compact, minimal tuples with matrix range equal to $\Wmin{}(K)$, as in Corollary \ref{cor:uniqueness_issues}. Similarly, if $T$ is compact, a minimal summand $S$ of $T$ with the same matrix range might not exist, from Example \ref{ex:not_a_surprise}. These results indicated the need for additional assumptions in the theorems of \cite[\S 6]{DDSS}, which have now been corrected in response (see \cite{DDSScorrected}). In the restricted setting of $\Wmin{}$ sets and compact operators, we consider some alternative relaxations of the problem in Propositions \ref{prop:compression_min_Ik} and  \ref{prop:approxunitaryequivalencefix}, which we believe could be useful starting points for further study.

The remainder of section \ref{sec:compmin} concerns operator tuples which are not necessarily compact. A simple spectral theorem argument in Theorem \ref{thm:easy_spec_thm} shows that there is a minimal \textit{normal} $T$ for matrix range $\mathcal{W}(T) = \Wmin{}(K)$ if and only if $K$ satisfies a simple geometric condition: the isolated extreme points of $K$ are dense in the set of all extreme points of $K$. In this case, $T$ must be diagonal with eigenvalues at the isolated extreme points of $K$. However, if $K$ has at least three extreme points, then Corollary \ref{cor:non-compact_non-unique} shows that the same condition on isolated extreme points implies the existence of uncountably many \textit{non-normal} minimal tuples for matrix range $\Wmin{}(K)$. Finally, for any compact convex set $K$ with at least three extreme points, there is a tuple $T$ with $\mathcal{W}(T) = \Wmin{}(K)$ such that $T$ has no summand which is minimal for the same matrix range, and such that $T$ has no normal summands at all, as in Theorem \ref{thm:body_ball_covering}.

Finally, in section \ref{sec:scaled_containments}, we consider two matrix convex set containments that are demonstrated by explicit dilation procedures. First, we dilate tuples of contractions to normal tuples in Theorem \ref{thm:contraction_exp_dilation}, with a new norm bound (see also Corollary \ref{cor:contractionSDdilation}). Second, we give a lower bound for the matrix range of a universal tuple of anticommuting self-adjoint unitaries, using an explicit dilation procedure developed in Theorem \ref{thm:cont_to_ACaj} and Corollary \ref{cor:cubeball}.

%%%%%%%%%%%%%%%%%%%%%%%%%%%%%%%%%%%%%%%%%%%%%%%
%%%%%%%%%%%%%%%%%%%%%%%%%%%%%%%%%%%%%%%%%%%%%%%

\section{Products of Minimal Sets}\label{sec:prodmin}

As in \cite{PSS}, for nonempty compact convex sets $K$ and $L$ in Euclidean space, define
\bes
\theta(K) := \inf \{ C > 0: \Wmax{}(K) \subseteq C \cdot \Wmin{}(K) \}
\ees
and
\bes
\theta(K, L) := \inf\{ C > 0: \Wmax{}(K) \subseteq C \cdot \Wmin{}(L)\}.
\ees
Two disparate estimates
\be\label{eq:constsumm}
\theta([-1, 1]^d) = \sqrt{d} < d = \theta([0,1]^d)
\ee
were computed in \cite[Theorems 6.4 and 6.7]{PSS}, along with a non-uniform version of the first equality,
\be\label{eq:nonuniformconstsumm}
\Wmax{}([-1,1]^d) \subseteq \Wmin{}\left(\prod_{j=1}^d [-a_j, a_j] \right) \iff \sum_{j=1}^d a_j^{-2} \leq 1.
\ee 
Equation (\ref{eq:constsumm}) is equivalent to the following dilation results.
\begin{enumerate}
\item\label{item:casesa} If $(X_1, \ldots, X_d) \in \cB(H)^d_{sa}$ is a tuple of \textit{self-adjoint} contractions, then there exists a dilation tuple $(M_1, \ldots, M_d)$ of commuting self-adjoint operators with norm $||M_i|| \leq \sqrt{d}$, and $\sqrt{d}$ is the optimal constant.
\item\label{item:casepos} If $(P_1, \ldots, P_d) \in \cB(H)^d_{sa}$ is a tuple of \textit{positive} contractions, then there exists a dilation tuple $(N_1, \ldots, N_d)$ of commuting positive operators with norm $||N_i|| \leq d$, and $d$ is the optimal constant.
\end{enumerate}

Both computations are paired with explicit dilation procedures. The disparity between the two constants emphasizes the fact that, when dilating self-adjoint contractions to self-adjoint operators which commute, the preservation of another relation among the contractions (namely, positivity) significantly alters the norm bound one can achieve. Consistent with this idea, the dilation procedure in \cite[Theorem 6.7]{PSS} which demonstrates (\ref{item:casesa}) begins by replacing each $X_i$ with a self-adjoint unitary. Therefore, the dilation procedure is generally not able to preserve additional properties of the tuple. Namely, it cannot
\begin{itemize}
\item preserve compactness of each $X_i$, or
\item preserve the satisfaction of other linear or non-linear inequalities by the $X_i$ (which are independent from $-1 \leq X_i \leq 1$), or
\item successfully use the fact that \textit{some} of the $X_i$ might already commute to lower the norm of the dilation in this special case
\end{itemize}
without modification. In contrast, an earlier explicit dilation procedure that demonstrates $\theta([-1,1]^d) \leq d$ in \cite[\S 7]{DDSS}, while far from achieving the optimal constant, \textit{does} preserve compactness information, and it simultaneously demonstrates multiple containments of the form $\Wmax{}(K) \subseteq \Wmin{}(L)$. In this section, we expand upon the third bullet point, in pursuit of the following problem.

\begin{problem}
Compute when $\prod\limits_{j=1}^d \Wmin{}(K_i) \subseteq \Wmin{}\left( L \right)$. Namely, if a large tuple $M$ consists of smaller subtuples $M^{[i]} = (M^{[i]}_1, \ldots, M^{[i]}_{n_i})$, where each $M^{[i]}$ is normal with joint spectrum $\sigma(M^{[i]}) \subseteq K_i$, does $M$ admit a normal dilation $N$ with joint spectrum in $L$?
\end{problem}

In particular, we show that when the $K_i$ are symmetric and $L$ is the product of (perhaps distinct) multiples of $K_i$, containment theorems follow from seemingly unrelated dilation constants. These constants are defined in reference to products of simplices and products of diamonds.

\begin{definition}
The \textit{standard simplex} $\Delta_n$ in $\bR^n$ refers to the convex hull of $0$ and the standard basis vectors $e_1, \ldots, e_n$. The corresponding \textit{standard diamond} $\Diam_n$ is the $\ell^1$ unit ball in $\bR^n$, i.e., the smallest symmetric convex set containing $\Delta_n$.
\end{definition}

\begin{definition}
Call a tuple $(a_1, \ldots, a_d)$ of positive numbers an \textit{SD-tuple} if it holds that for any $n \in \bZ^+$,
\be\label{eq:SDdefcontainment}
\Wmax{}\left( \Delta_n^d \right) \subseteq \Wmin{}\left(\prod_{j=1}^d a_j \cdot \Diam_n \right).
\ee
Similarly, let 
\bes \begin{aligned}
\USD{d} 	&:= \sup_{n \in \bZ^+} \theta( \Delta_n^d, \Diam_n^d) \\
			&= \inf \{C > 0: (C, \ldots, C) \text{ is an SD-tuple of length } d\}
\end{aligned} \ees 
be called the \textit{uniform SD-constant for }$d$-\textit{tuples}.
\end{definition}

Dilation techniques from \cite{DDSS} imply that SD-tuples exist and that $\USD{d}$ is finite, and such results will be clarified later in this section. The most common use of SD-tuples will come in the following form.

\begin{proposition}\label{prop:SDdilation}
Fix $n_1, \ldots, n_d \in \bZ^+$ and let $P$ be a tuple of self-adjoint operators $P^{[i]}_j$, $1 \leq i \leq d$, $1 \leq j \leq n_i$, on $\cB(H)$  such that $W_1(P) \subseteq \left( \prod\limits_{i = 1}^d \Delta_{n_i} \right)$. (That is, $P^{[i]}_j \geq 0$, and for each $i$, $\sum\limits_{j=1}^{n_i} P^{[i]}_j \leq I$.) If $(a_1, \ldots, a_d)$ is an SD-tuple, then there exists a dilation $Q \succ P$ consisting of self-adjoint operators with the following properties.
\begin{enumerate}
\item  Each $Q^{[i]}_j$ has $\sigma(Q^{[i]}_j) \subseteq \{ -a_i, 0, a_i \}$.
\item The operators $Q^{[i]}_j$ all commute with each other.
\item If $i$ is fixed, then distinct members of the tuple $Q^{[i]} = (Q^{[i]}_1, \ldots, Q^{[i]}_{n_i})$ are orthogonal.
\item If $\cH$ is finite-dimensional, then $Q$ acts on a finite-dimensional space as well.
\end{enumerate}
\end{proposition}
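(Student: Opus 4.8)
The plan is to derive Proposition \ref{prop:SDdilation} from the defining containment (\ref{eq:SDdefcontainment}) of an SD-tuple by carefully tracking the dilation furnished by the containment $\Wmax{}(\Delta_n^d) \subseteq \Wmin{}(\prod a_j \cdot \Diam_n)$, in the spirit of Proposition \ref{prop:introcara}. First I would reduce to the case $n_1 = n_2 = \cdots = n_d = n$: if the $n_i$ are unequal, set $n = \max_i n_i$ and pad each subtuple $P^{[i]}$ with $n - n_i$ extra zero operators. Since $0 \in \Delta_n$ and $\Delta_{n_i}$ sits inside $\Delta_n$ as a face (after reindexing coordinates), the hypothesis $W_1(P^{[i]}) \subseteq \Delta_{n_i}$ upgrades to $W_1(\widetilde P^{[i]}) \subseteq \Delta_n$ for the padded tuple, and any dilation $Q$ of the padded tuple restricts to a dilation of $P$ with the claimed properties (the extra coordinates will dilate to operators that we simply discard). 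So assume all $n_i = n$.

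Next, observe that the hypothesis says exactly that the tuple $P = (P^{[i]}_j)_{i,j}$, viewed as a point of $\cB(H)^{dn}_{sa}$, lies in $\Wmax{}(\Delta_n^d)_\infty$ — indeed $\mathcal{W}_1(P) \subseteq \Delta_n^d$ by assumption, and $\Wmax{}(\Delta_n^d)$ is precisely the largest matrix convex set with first level $\Delta_n^d$, so every operator tuple with numerical range in $\Delta_n^d$ belongs to it; equivalently one checks the defining linear inequalities of (\ref{eq:wmaxdef}) directly since $\Delta_n^d$ is a polytope. By the SD-tuple containment (\ref{eq:SDdefcontainment}), $P \in \Wmin{}\big(\prod_{i=1}^d a_i \cdot \Diam_n\big)$. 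Now apply the "$\overline{\text{ext}}$" refinement of Proposition \ref{prop:introcara}: since $\mathcal{W}(P) \subseteq \Wmin{}(L)$ with $L = \prod_i a_i \cdot \Diam_n$, there is a normal dilation $Q = (Q^{[i]}_j)$ of $P$ with $\sigma(Q) \subseteq \overline{\text{ext}(L)}$, and if $H$ is finite-dimensional then $Q$ may be taken finite-dimensional too. This immediately gives properties (2) and (4). For property (1), note that $\text{ext}(L) = \text{ext}\big(\prod_i a_i \Diam_n\big) = \prod_i a_i \cdot \text{ext}(\Diam_n)$, and the extreme points of the $\ell^1$-ball $\Diam_n$ are $\pm e_1, \ldots, \pm e_n$; so each coordinate of a point of $\text{ext}(L)$ indexed by $(i,j)$ lies in $\{-a_i, 0, a_i\}$, whence joint normality forces $\sigma(Q^{[i]}_j) \subseteq \{-a_i, 0, a_i\}$. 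Property (3) follows because for a fixed $i$ the $n$ operators $Q^{[i]}_1, \ldots, Q^{[i]}_n$ are the $n$ coordinate functions restricted to (a projection onto) the joint spectrum, which is contained in $a_i \cdot \{\pm e_1, \ldots, \pm e_n\}$; distinct points $\pm e_j$ and $\pm e_k$ have disjoint support, so the corresponding spectral projections of $Q^{[i]}_j$ and $Q^{[i]}_k$ corresponding to nonzero eigenvalues are orthogonal, i.e. $Q^{[i]}_j Q^{[i]}_k = 0$ for $j \neq k$ (this is what "orthogonal" should mean here, consistent with the positivity constraint $\sum_j Q^{[i]}_j$ having spectrum still controlled).

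The main obstacle I anticipate is making the reduction "$P \in \Wmax{}(\Delta_n^d)$" and the exact shape of $\text{ext}(L)$ airtight, and — more delicately — verifying that Proposition \ref{prop:introcara} can be invoked with a dilation landing on $\overline{\text{ext}(L)}$ rather than merely inside $L$, and that this set-theoretic spectral condition really does yield the \emph{algebraic} orthogonality in (3) and the three-point spectrum in (1) (as opposed to spectrum merely contained in the convex hull). This is where one must use that $Q$ is a genuinely \emph{normal} tuple: the joint spectral measure is supported on $\overline{\text{ext}(L)}$, which is the finite set $\{a_i \cdot (\pm e_j)\}$ in each block of coordinates (here $\Diam_n$ has finitely many extreme points, so $\text{ext}(L)$ is closed and finite, and the bar is superfluous), and the individual $Q^{[i]}_j$ are obtained by integrating coordinate projections against this measure. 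Once that structure is in hand, properties (1)–(3) are read off directly, and (4) is the finite-dimensional clause of Proposition \ref{prop:introcara}. I would also remark that the commuting normal dilation could alternatively be produced by an explicit Naimark-type construction from the dilations in \cite{DDSS} without passing through Proposition \ref{prop:introcara}, which may be preferable if one wants effective dimension bounds, but the abstract route above is the cleanest for the stated (dimension-free) conclusion.
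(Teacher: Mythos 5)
Your proposal is correct and follows essentially the same route as the paper: pad the tuples with zeros to equalize the $n_i$, invoke the SD-containment together with Proposition \ref{prop:introcara} to obtain a normal dilation with joint spectrum in the extreme points of $\prod_i a_i \cdot \Diam_n$, and read off properties (1)--(4) from the structure of those extreme points. The extra detail you supply (the product formula for $\text{ext}(L)$, the spectral-measure argument for orthogonality) is exactly what the paper leaves implicit, and your worry about the $\overline{\text{ext}}$ refinement is already resolved by the statement of Proposition \ref{prop:introcara} itself.
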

\begin{proof}
By introducing the $0$ operator into tuples if necessary, we may assume that $n_1, \ldots, n_d$ are equal to a fixed $n$. Since $\Wmax{}\left( \Delta_n^d \right) \subseteq \Wmin{}\left(\prod\limits_{j=1}^d a_j \cdot \Diam_n \right)$ holds, Proposition \ref{prop:introcara} shows there is a normal dilation $Q$ of $P$ whose joint spectrum lies in the extreme points of $\prod\limits_{j=1}^d a_j \cdot \Diam_n$, where $Q$ acts on a finite-dimensional space if $\cH$ is finite-dimensional. The extreme points of $\prod\limits_{j=1}^d a_j \cdot \Diam_n$ are positioned exactly so that the remaining properties (1)-(3) also hold.
\end{proof}
\begin{remark}
It is important to note that the properties listed do \textit{not} imply that the operators $Q^{[i]}_j$ are positive, or that all products of distinct $Q^{[i]}_j$ are zero. We abuse notation somewhat by letting $Q = (Q^{[1]}, \ldots, Q^{[d]})$ denote the \lq\lq conjoined tuple\rq\rq\hspace{0pt} consisting of all the matrices $Q^{[i]}_j$, ordered by $i$ first and $j$ second.
\end{remark}

\begin{theorem}\label{thm:symmetricSDconsequences}
Let $(a_1, \ldots, a_d)$ be an SD-tuple, and for each $1 \leq i \leq d$, let $K_i \subseteq \bR^{n_i}$ be a compact convex set with $K_i = -K_i$. Then
\be\label{eq:symmSDdilationresult}
\prod_{i=1}^d \Wmin{}(K_i) \subseteq \Wmin{}\left( \prod_{i=1}^d a_i K_i \right)
\ee
holds. Consequently, it also holds that
\be
\Wmax{}\left( \prod_{i=1}^d K_i \right) \subseteq \Wmin{}\left( \prod_{i=1}^d \theta(K_i) a_i K_i \right)
\ee
and
\be
\theta\left( \prod_{i=1}^d K_i \right) \leq \inf _{a \text{ is SD}} \left[ \max_{1 \leq i \leq d} \hspace{4 pt} a_i \theta(K_i) \right] \leq  \USD{d} \cdot \max_{1 \leq i \leq d} \theta(K_i).
\ee
\end{theorem}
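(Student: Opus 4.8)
The plan is to prove the key containment \eqref{eq:symmSDdilationresult} first, and then derive the two consequences by routine monotonicity and scaling arguments using the definitions of $\theta(K_i)$, $\theta(\prod K_i)$, and $\USD{d}$. For the main containment, I would argue at the level of operator tuples and dilations. Suppose $X = (X^{[1]}, \ldots, X^{[d]})$ belongs to $\prod_{i=1}^d \Wmin{}(K_i)_m$, so that each subtuple $X^{[i]}$ lies in $\Wmin{}(K_i)_m$; by the first formulation in \eqref{eq:wmindef}, each $X^{[i]}$ has a normal dilation with joint spectrum in $K_i$. I want to produce a single normal dilation $N$ of the conjoined tuple $X$ with $\sigma(N) \subseteq \prod_{i=1}^d a_i K_i$, which by \eqref{eq:wmindef} again gives $X \in \Wmin{}(\prod_i a_i K_i)$.

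The bridge to SD-tuples is the symmetry hypothesis $K_i = -K_i$, which lets me pass through simplices and diamonds. Since $K_i$ is a symmetric compact convex body in $\bR^{n_i}$ (after the reduction to convex bodies described in the excerpt via \eqref{eq:affine}--\eqref{eq:proj}), I would express $K_i$ as a limit (or direct image under a linear map) of symmetric polytopes, and a symmetric polytope with $2k_i$ vertices is a linear image of the standard diamond $\Diam_{k_i}$; correspondingly $\Wmin{}(K_i)$ is controlled in terms of $\Wmin{}(\Diam_{k_i})$. Dually, $\Wmax{}$ of the product of the dual simplices $\Delta_{k_i}$ governs the compressions of the $X^{[i]}$. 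More concretely: because $X^{[i]}\in\Wmin{}(K_i)$ and $K_i$ is a symmetric polytope, one can write each coordinate of $X^{[i]}$ as a combination of the vertex directions, repackaging $X$ as a compression of a tuple $P$ with $W_1(P)\subseteq \prod_i \Delta_{n_i}$ in the sense of Proposition~\ref{prop:SDdilation}. Applying Proposition~\ref{prop:SDdilation} with the given SD-tuple $(a_1,\dots,a_d)$ produces a commuting self-adjoint dilation $Q$ with $\sigma(Q^{[i]}_j)\subseteq\{-a_i,0,a_i\}$ and the orthogonality property (3); translating $Q$ back through the linear maps that encode the polytopes, the joint spectra land in $a_i K_i$ for each block, and the blocks commute because all the $Q^{[i]}_j$ commute. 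Taking a limit over finer polytopal approximations (using that $\Wmin{}$ sets are closed, and that UCP maps / Stinespring dilations behave well under pointwise limits, as noted after Proposition~\ref{prop:introcara}) yields the dilation for general symmetric $K_i$. This establishes \eqref{eq:symmSDdilationresult}.

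For the consequences: given $X\in\Wmax{}(\prod_i K_i)_m$, we have $W_1(X^{[i]})\subseteq K_i$ for each $i$, i.e. $X^{[i]}\in\Wmax{}(K_i)_m$; by definition of $\theta(K_i)$ and closedness of $\Wmin{}$ sets, $X^{[i]}\in \theta(K_i)\cdot\Wmin{}(K_i)=\Wmin{}(\theta(K_i)K_i)$, using \eqref{eq:affineminmax}. Since $\theta(K_i)K_i$ is again symmetric, \eqref{eq:symmSDdilationresult} applied to the sets $\theta(K_i)K_i$ gives $X\in\Wmin{}(\prod_i a_i\theta(K_i)K_i)$, which is the second displayed inclusion. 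Enlarging each block to the common radius $\max_i a_i\theta(K_i)$ and taking the infimum over all SD-tuples $a$ then gives $\theta(\prod_i K_i)\le \inf_{a\text{ SD}}\max_i a_i\theta(K_i)$; bounding $a_i$ above by $\USD{d}$ (valid since $(\USD{d},\dots,\USD{d})$ is an SD-tuple by definition of $\USD{d}$, up to an $\ep$ that vanishes by closedness) yields the final bound $\le \USD{d}\cdot\max_i\theta(K_i)$.

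The main obstacle I anticipate is the bookkeeping in the polytopal-approximation step: correctly repackaging a tuple in $\prod_i\Wmin{}(K_i)$ as a compression of a positive tuple adapted to $\prod_i\Delta_{n_i}$, verifying that the linear change of variables sends $\operatorname{ext}\big(\prod_i a_i\Diam_{n_i}\big)$ onto the intended spectrum in $\prod_i a_i K_i$ and preserves the commuting/normal structure, and checking that the limit over finer polytopes is legitimate (one must keep dimension growth under control or invoke the pointwise-limit stability of matrix ranges rather than a naive union). Everything after \eqref{eq:symmSDdilationresult} should be formal.
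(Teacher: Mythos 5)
Your treatment of the two ``consequence'' inclusions is correct and matches the paper's (both are formal, given \eqref{eq:symmSDdilationresult}, the attainment of the infimum defining $\theta(K_i)$ via closedness of $\Wmin{}$ sets, and $0\in K_i$). For the main containment, however, you take a genuinely different and more roundabout route whose crucial step is not actually carried out. You propose to approximate each symmetric $K_i$ by symmetric polytopes, realize each polytope as a linear image of a standard diamond $\Diam_{k_i}$ (with $k_i$ governed by the number of vertices), ``repackage'' $X$ as a compression of a positive tuple dominated by a product of simplices, and then pass to a limit over finer polytopes. The repackaging is exactly where the content lies, and you leave it at the level of a plan; making it precise requires (a) justifying $\Wmin{}(A(\Diam_k))=A(\Wmin{}(\Diam_k))$ for a \emph{non-invertible} linear $A$ (equations \eqref{eq:affine}--\eqref{eq:affineminmax} only cover invertible maps), (b) splitting each resulting diamond-valued normal tuple into a pair of orthogonal projection tuples to land in $\prod\Delta_{2k_i}$ (note your $\Delta_{n_i}$ clashes with the ambient dimension $n_i$ of the theorem), and (c) an inner-approximation argument of the form $(1-\ep)K_i\subseteq L_i\subseteq K_i$ so that a fixed $X^{[i]}\in\Wmin{}(K_i)$ actually lies in $\Wmin{}(L_i)$ up to the scaling $(1-\ep)$, before letting $\ep\to 0$. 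None of these steps is fatal, but none is done.

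The paper avoids all of this with one observation: no polytopal approximation of $K_i$ is needed, because the simplex dimension should be the \emph{matrix size}, not the geometric complexity of $K_i$. Dilate each $X^{[i]}$ to a normal matrix tuple $N^{[i]}$ with $\sigma(N^{[i]})\subseteq K_i$ (and, after conjugating by unitaries, arrange that all $N^{[i]}$ act on a common space $\bC^m$ and compress to $X^{[i]}$ through a common isometry --- a point you should also address, since your block dilations a priori live on different spaces). Jointly diagonalize $N^{[i]}_j=\sum_{k=1}^m\lambda^{[i]}_{j,k}P^{[i]}_k$; the conjoined tuple of spectral projections $P=(P^{[1]},\ldots,P^{[d]})$ automatically satisfies $\mathcal W_1(P)\subseteq\Delta_m^d$. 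Proposition \ref{prop:SDdilation} then yields commuting $Q^{[i]}_k$ with $\sigma(Q^{[i]}_k)\subseteq\{-a_i,0,a_i\}$ and blockwise orthogonality, and $M^{[i]}_j:=\sum_k\lambda^{[i]}_{j,k}Q^{[i]}_k$ is a normal dilation with joint spectrum in $-a_iK_i\cup\{0\}\cup a_iK_i=a_iK_i$, the symmetry $K_i=-K_i$ entering only to absorb the sign ambiguity. This is exact (no $\ep$, no limit) and is the device your proposal is missing; I would recommend replacing the polytopal-approximation scheme with it, or else filling in (a)--(c) above in full.
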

\begin{proof}
Fix $m \in \bZ^+$. For each $i \in \{1, \ldots, d\}$, let $N^{[i]} = (N^{[i]}_1, \ldots, N^{[i]}_{n_i})$ be a normal tuple of self-adjoint $m \times m$ matrices with joint spectrum satisfying $\sigma(N^{[i]}) \subseteq K_i$. The normal tuple $N^{[i]}$ admits a joint diagonalization, so we may specify a tuple $P^{[i]} =  (P^{[i]}_1, \ldots, P^{[i]}_m)$ of mutually orthogonal projections with $\sum\limits_{k=1}^m P^{[i]}_k = I_m$ and a collection of eigenvalues $\lambda^{[i]}_{j,k}$ such that
\bes
N^{[i]}_j = \sum_{k=1}^m \lambda^{[i]}_{j,k} P^{[i]}_k
\ees
holds. By assumption, for each $i$ and $k$, the tuple $(\lambda^{[i]}_{1,k}, \ldots, \lambda^{[i]}_{n_i,k})$ belongs to $K_i$. 

Consider the conjoined tuple $P = (P^{[1]}, \ldots, P^{[d]})$, which has $\mathcal{W}_1(P) \subseteq \Delta_m^d$, so that we may form a dilation $Q = (Q^{[1]}, \ldots, Q^{[d]})$ guaranteed by Proposition \ref{prop:SDdilation}. All of the matrices $Q^{[i]}_k$ commute with each other, and if $i$ is fixed but $k \not= l$, it follows that $Q^{[i]}_k Q^{[i]}_l = 0$. Finally, each matrix $Q^{[i]}_k$ has $\sigma(Q^{[i]}_k) \subseteq \{-a_i, 0, a_i\}$. Therefore, for each fixed $i$, the matrices
\be\label{eq:sortofdiag}
M^{[i]}_j := \sum_{k=1}^m \lambda^{[i]}_{j,k} Q^{[i]}_k
\ee
are essentially given in jointly diagonalized form, and the tuple $M^{[i]} = (M^{[i]}_1, \ldots, M^{[i]}_{n_i})$ is normal with joint spectrum contained in $-a_i K_i \cup \{0\} \cup a_i K_i = a_i K_i$. Further, for any choices of $i$ and $j$, the self-adjoint matrices $M^{[i]}_j$ commute with each other, so the conjoined tuple $M = (M^{[1]}, \ldots, M^{[d]})$ is also normal. Finally, the joint spectrum of $M$ is contained in $\prod\limits_{i=1}^d \sigma(M^{[i]})$, which is contained in $\prod\limits_{i=1}^d a_i K_i$, so (\ref{eq:symmSDdilationresult}) holds. The remaining identities then follow from the equality $\Wmax{}\left( \prod\limits_{i=1}^d K_i\right) = \prod\limits_{i=1}^d \Wmax{}(K_i)$ and the definitions of $\theta(K_i)$ and $\USD{d}$.
\end{proof}

A consequence of this result is that SD-tuples may be defined with reference only to diamonds, as opposed to both simplices and diamonds.

\begin{corollary}
A tuple $(a_1, \ldots, a_d)$ of positive numbers is an SD-tuple if and only if for each $n \in \bZ^+$, 
\be\label{eq:diamondonlySD}
\prod_{j=1}^d \Wmin{}\left(\Diam_n \right) \subseteq \Wmin{}\left(\prod_{j=1}^d a_j \cdot \Diam_n \right).
\ee
\end{corollary}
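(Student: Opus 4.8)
The plan is to prove the equivalence by showing each direction separately, with the forward direction being essentially immediate and the reverse direction being the substantive one. For the forward direction, suppose $(a_1, \ldots, a_d)$ is an SD-tuple. Since $\Delta_n \subseteq \Diam_n$, we have $\Wmin{}(\Diam_n) \supseteq \Wmin{}(\Delta_n)$, but this inclusion goes the wrong way, so instead I would apply Theorem \ref{thm:symmetricSDconsequences} directly: each $\Diam_n$ is symmetric (it is the $\ell^1$ unit ball, so $\Diam_n = -\Diam_n$), so taking $K_i = \Diam_n$ for all $i$ in (\ref{eq:symmSDdilationresult}) immediately yields $\prod_{j=1}^d \Wmin{}(\Diam_n) \subseteq \Wmin{}(\prod_{j=1}^d a_j \cdot \Diam_n)$, which is exactly (\ref{eq:diamondonlySD}). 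So the forward direction is a one-line consequence of the theorem already proved.

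For the reverse direction, suppose (\ref{eq:diamondonlySD}) holds for every $n$; I must show (\ref{eq:SDdefcontainment}), namely $\Wmax{}(\Delta_n^d) \subseteq \Wmin{}(\prod_{j=1}^d a_j \cdot \Diam_n)$. The key observation is the relationship $\Wmax{}(\Delta_n) \subseteq \Wmin{}(\Diam_n)$, which should hold because $\Delta_n$ is a simplex: by (\ref{eq:simplex2tod-1}), for a simplex $K$ we have $\Wmax{}(K) = \Wmin{}(K)$, and since $\Delta_n \subseteq \Diam_n$ we get $\Wmin{}(\Delta_n) \subseteq \Wmin{}(\Diam_n)$, hence $\Wmax{}(\Delta_n) = \Wmin{}(\Delta_n) \subseteq \Wmin{}(\Diam_n)$. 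Taking the graded product over $d$ copies and using $\Wmax{}(\Delta_n^d) = \prod_{j=1}^d \Wmax{}(\Delta_n)$ (the product factorization for $\Wmax{}$ noted in the excerpt), we obtain
\bes
\Wmax{}(\Delta_n^d) = \prod_{j=1}^d \Wmax{}(\Delta_n) \subseteq \prod_{j=1}^d \Wmin{}(\Diam_n) \subseteq \Wmin{}\left(\prod_{j=1}^d a_j \cdot \Diam_n\right),
\ees
where the last inclusion is the assumed (\ref{eq:diamondonlySD}). This establishes (\ref{eq:SDdefcontainment}) and hence that $(a_1, \ldots, a_d)$ is an SD-tuple.

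The main point to get right — the only step that is not purely formal — is the claim $\Wmax{}(\Delta_n) \subseteq \Wmin{}(\Diam_n)$; everything else is bookkeeping with the product factorization of $\Wmax{}$ and monotonicity of $\Wmin{}$ and $\Wmax{}$ under set inclusion. I would double-check that $\Delta_n$ is genuinely a simplex in the sense required by (\ref{eq:simplex2tod-1}): it is the convex hull of the $n+1$ affinely independent points $0, e_1, \ldots, e_n$, so it is an $n$-simplex, and the cited equivalence applies (possibly after the affine-subspace reduction discussed around (\ref{eq:proj}), though since $\Delta_n$ has nonempty interior in $\bR^n$ this is unnecessary). One should also note that both $\Diam_n$ and the scaled product $\prod_j a_j \cdot \Diam_n$ are symmetric compact convex bodies, so all the $\Wmin{}$ and $\Wmax{}$ objects in sight are honest closed bounded matrix convex sets and the inclusions are between well-defined sets. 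No genuine obstacle is anticipated; the corollary is really just the observation that Theorem \ref{thm:symmetricSDconsequences} already upgrades a diamond-only hypothesis to the full simplex-and-diamond definition, via the simplex identity $\Wmax{}(\Delta_n) = \Wmin{}(\Delta_n)$.
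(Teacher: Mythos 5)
Your proposal is correct and follows essentially the same route as the paper: the forward direction is Theorem \ref{thm:symmetricSDconsequences} with $K_i = \Diam_n$, and the converse is exactly the chain $\Wmax{}(\Delta_n^d) = \prod_{j=1}^d \Wmax{}(\Delta_n) = \prod_{j=1}^d \Wmin{}(\Delta_n) \subseteq \prod_{j=1}^d \Wmin{}(\Diam_n) \subseteq \Wmin{}\bigl(\prod_{j=1}^d a_j \cdot \Diam_n\bigr)$, using the simplex identity and monotonicity of $\Wmin{}$. Nothing is missing.
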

\begin{proof}
The forward direction is given by Theorem \ref{thm:symmetricSDconsequences} for the choice of symmetric set $K_i = \Diam_n$. For the converse, note that (\ref{eq:diamondonlySD}) directly implies (\ref{eq:SDdefcontainment}), as
\bes
\Wmax{}\left( \Delta_n^d \right) = \prod_{j=1}^d \Wmax{}(\Delta_n) = \prod_{j=1}^d \Wmin{}(\Delta_n) \subseteq \prod_{j=1}^d \Wmin{}(\Diam_n) \subseteq \Wmin{}\left(\prod_{j=1}^d a_j \cdot \Diam_n \right).
\ees
\end{proof}

We may also dilate tuples of contractions using the previous results. Let $\overline{\bD}$ be the closed unit disk and fix any $Y \in \Wmin{}(\overline{\bD})$, where we may choose to view $Y$ as a single matrix (instead of two self-adjoint matrices). Since $Y$ has a normal dilation $N$ with \lq\lq joint\rq\rq\hspace{0pt} spectrum in $\bS^1 \subseteq \overline{\bD}$, it follows that $||N|| \leq 1$ and consequently $||Y|| \leq 1$. On the other hand, if $X$ is any matrix with $||X|| \leq 1$, then the Halmos dilation procedure
\be\label{eq:SDhalmosdilationdefinition}
X \textrm{ is a contraction } \implies U := \begin{pmatrix} X & \sqrt{I - XX^*} \\ \sqrt{I - X^*X} & -X^* \end{pmatrix} \text{ is a unitary dilation}
\ee
of \cite{halmos} shows that $X \in \Wmin{}(\overline{\bD})$. It therefore follows that $\Wmin{}(\overline{\bD})$ is precisely the collection of (not necessarily self-adjoint) matrix contractions. Applying the graded product, we obtain the following simple fact:
\be\label{eq:IcantbelieveImissedthisonthefirsttimearound}
\prod_{j=1}^d \Wmin{}(\overline{\bD}) = \{T \in \Md: \text{for each } i \in \{1, \ldots, d\}, ||T_i|| \leq 1\}.
\ee

\begin{corollary}\label{cor:contractionSDdilation}
Let $H$ be a Hilbert space of any dimension, and let $T \in \cB(H)^d$ be a tuple of (not necessarily self-adjoint) contractions. Then for any SD-tuple $(a_1, \ldots, a_d)$, it holds that
\begin{itemize}
\item $\mathcal{W}(T) \subseteq \Wmin{}\left(\prod\limits_{i=1}^d a_i \overline{\mathbb{D}}\right)$, and
\item there exists a normal tuple $N$ which dilates $T$ and has $||N_i|| \leq a_i$ for each $i$. 
\end{itemize}
In particular, we may choose $a_i = \USD{d}$, and if $\cH$ is finite-dimensional, we may choose $N$ which acts on a finite-dimensional space.
\end{corollary}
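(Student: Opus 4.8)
The plan is to deduce the corollary by chaining \eqref{eq:IcantbelieveImissedthisonthefirsttimearound}, Theorem \ref{thm:symmetricSDconsequences}, and Proposition \ref{prop:introcara}; essentially all of the mathematical content lives in those three statements, so the corollary amounts to bookkeeping. For the first bullet I would argue as follows. Any $Q = (Q_1, \ldots, Q_d) \in \mathcal{W}_n(T)$ has the form $Q_i = \psi(T_i)$ for some UCP map $\psi : \cB(H) \to M_n$, and since unital completely positive maps are completely contractive, $\|Q_i\| \leq \|T_i\| \leq 1$ for every $i$. By \eqref{eq:IcantbelieveImissedthisonthefirsttimearound} this says precisely that $Q$ belongs to the graded product $\prod_{j=1}^d \Wmin{}(\overline{\bD})$ at level $n$, so $\mathcal{W}(T) \subseteq \prod_{j=1}^d \Wmin{}(\overline{\bD})$. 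It is worth flagging that one truly needs complete contractivity here: knowing only $\mathcal{W}_1(T) \subseteq \overline{\bD}^d$ would give the weaker bound $\mathcal{W}(T) \subseteq \Wmax{}(\overline{\bD}^d)$, since $\overline{\bD}$ is not a simplex. Now $\overline{\bD}$, viewed as a compact convex subset of $\bR^2$, is symmetric ($\overline{\bD} = -\overline{\bD}$), so Theorem \ref{thm:symmetricSDconsequences} with every $K_i = \overline{\bD}$ yields $\prod_{j=1}^d \Wmin{}(\overline{\bD}) \subseteq \Wmin{}\bigl(\prod_{i=1}^d a_i \overline{\bD}\bigr)$ for any SD-tuple $(a_1, \ldots, a_d)$; composing the two inclusions gives the first bullet.

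For the normal dilation, set $L = \prod_{i=1}^d a_i \overline{\bD}$. Since $\mathcal{W}(T) \subseteq \Wmin{}(L)$, Proposition \ref{prop:introcara} furnishes a normal dilation $N$ of $T$ with $\sigma(N) \subseteq \overline{\text{ext}(L)}$, acting on a finite-dimensional space whenever $H$ is finite-dimensional (one may invoke the proposition for the complex tuple directly or pass to the $2d$ self-adjoint coordinates; the dimension bound is finite either way). The extreme points of a product of convex sets form the product of the extreme point sets, so $\text{ext}(L) = \prod_{i=1}^d a_i \bS^1$, which is compact and hence equal to its own closure. Projecting the joint spectrum of $N$ onto the $i$-th coordinate gives $\sigma(N_i) \subseteq a_i \bS^1$, and since $N_i$ is normal its norm equals its spectral radius, so $\|N_i\| \leq a_i$. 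This is the second bullet.

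To handle the clause $a_i = \USD{d}$, I would note that $(\USD{d}, \ldots, \USD{d})$ is itself an SD-tuple: the infimum defining $\theta(\Delta_n^d, \Diam_n^d)$ is attained for each $n$, because the set of admissible scaling constants is closed under decreasing limits (pass to the limit in $\tfrac{1}{C_k}\Wmax{}(\Delta_n^d) \subseteq \Wmin{}(\Diam_n^d)$ using that $\Wmin{}$ sets are closed), so $\Wmax{}(\Delta_n^d) \subseteq \USD{d}\cdot\Wmin{}(\Diam_n^d) = \Wmin{}(\USD{d}\cdot\Diam_n^d)$ for every $n$ by \eqref{eq:affineminmax}, which is exactly \eqref{eq:SDdefcontainment} for the constant $d$-tuple. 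Substituting $a_i = \USD{d}$ into the two bullets finishes the proof. I do not anticipate a serious obstacle; the only care-points are the appeal to complete contractivity just mentioned, the identity $\overline{\text{ext}(L)} = \prod_i a_i \bS^1$ that lets the spectral-radius estimate run coordinatewise, and the short verification that the constant $\USD{d}$-tuple is admissible.
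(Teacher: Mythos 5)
Your proposal is correct and follows essentially the same route as the paper: establish $\mathcal{W}(T) \subseteq \prod_{j} \Wmin{}(\overline{\bD})$ via \eqref{eq:IcantbelieveImissedthisonthefirsttimearound}, apply Theorem \ref{thm:symmetricSDconsequences} with the symmetric sets $K_i = \overline{\bD}$, and then invoke Proposition \ref{prop:introcara} for the dilation. The extra details you supply (the spectral-radius argument identifying $\overline{\text{ext}}\bigl(\prod_i a_i \overline{\bD}\bigr) = \prod_i a_i \bS^1$, and the closedness argument showing the constant $\USD{d}$-tuple is itself an SD-tuple) are correct fillings-in of steps the paper leaves implicit.
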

\begin{proof}
Since $T$ is a tuple of contractions, any tuple $A \in \mathcal{W}(T)$ consists of matrix contractions and is therefore contained in $\prod\limits_{j=1}^d \Wmin{}(\overline{\bD})$ by (\ref{eq:IcantbelieveImissedthisonthefirsttimearound}). By Theorem \ref{thm:symmetricSDconsequences}, it follows that $A \in \Wmin{}\left(\prod\limits_{i=1}^d a_i \overline{\mathbb{D}}\right)$. Since we then have $\mathcal{W}(T) \subseteq \Wmin{}\left(\prod\limits_{i=1}^d a_i \overline{\mathbb{D}}\right)$, applying Proposition \ref{prop:introcara} finishes the proof.
\end{proof}

See Theorem \ref{thm:contraction_exp_dilation} for an explicit dilation procedure that begins with a tuple $T \in \cB(H)^d$ of contractions and ends with a normal tuple $N$ satisfying $||N_i|| \leq \sqrt{2d}$ for each $i$, where the constant $\sqrt{2d}$ is not necessarily optimal. The optimal dilation scale of \textit{self-adjoint} contractions is known from (\ref{eq:nonuniformconstsumm}), allowing us to place bounds on the collection of SD-tuples.

\begin{corollary}
If $(a_1, \ldots, a_d)$ is an SD-tuple, then $\sum\limits_{j=1}^d a_j^{-2} \leq 1$. Consequently, $\USD{d} \geq \sqrt{d}$.
\end{corollary}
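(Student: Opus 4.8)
The plan is to extract the inequality by specializing the defining containment of an SD-tuple to the smallest dimension $n=1$, where the diamond $\Diam_1$ collapses to the interval $[-1,1]$, and then to invoke the sharp non-uniform self-adjoint dilation estimate (\ref{eq:nonuniformconstsumm}).

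First I would apply the characterization of SD-tuples via (\ref{eq:diamondonlySD}) with $n=1$. Since $\Diam_1 = [-1,1]$, an SD-tuple $(a_1,\dots,a_d)$ satisfies
\bes
\prod_{j=1}^d \Wmin{}([-1,1]) \subseteq \Wmin{}\left( \prod_{j=1}^d [-a_j,a_j] \right).
\ees
Next I would identify the left-hand side with $\Wmax{}([-1,1]^d)$: the interval $[-1,1]$ is a (one-dimensional) simplex, so $\Wmin{}([-1,1]) = \Wmax{}([-1,1])$ by (\ref{eq:simplex2tod-1}), and $\Wmax{}$ distributes over graded products (immediate from (\ref{eq:wmaxdef})); alternatively one checks directly from (\ref{eq:wmaxdef}) and (\ref{eq:wmindef}) that both sides consist, at each level $m$, exactly of the tuples of $m\times m$ self-adjoint contractions. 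Either way we obtain $\Wmax{}([-1,1]^d) \subseteq \Wmin{}\big( \prod_{j=1}^d [-a_j,a_j] \big)$, and the forward implication of (\ref{eq:nonuniformconstsumm}) gives $\sum_{j=1}^d a_j^{-2} \le 1$.

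For the claim about $\USD{d}$, I would feed the uniform SD-tuple into the inequality just proved: whenever $(C,\dots,C)$ is an SD-tuple of length $d$ (equivalently, for every $C > \USD{d}$, since the set of such $C$ is an up-set by monotonicity of $\Wmin{}$) we get $d\,C^{-2} \le 1$, hence $C \ge \sqrt d$. Taking the infimum over all such $C$ yields $\USD{d} \ge \sqrt d$.

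I do not expect a real obstacle: the whole argument is carried by the two results already in hand, namely the diamond reformulation of SD-tuples and the sharp estimate (\ref{eq:nonuniformconstsumm}). The one point that genuinely matters is using the \emph{symmetric} set $\Diam_1 = [-1,1]$ rather than the simplex $\Delta_1 = [0,1]$: specializing the original SD-definition at $n=1$ would only yield a containment for tuples of \emph{positive} contractions, and then affinely reparametrizing $[0,1]^d$ to $[-1,1]^d$ inflates the target intervals and loses the sharp constant. Passing through the diamond formulation is exactly what avoids this loss.
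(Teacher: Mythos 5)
Your proof is correct and follows essentially the same route as the paper: the paper applies Theorem \ref{thm:symmetricSDconsequences} directly with the symmetric set $K_i = [-1,1]$, which is exactly your specialization of the diamond reformulation at $n=1$ (since $\Diam_1 = [-1,1]$), and then both arguments identify $\prod_{j=1}^d \Wmin{}([-1,1])$ with $\Wmax{}([-1,1]^d)$ via the simplex/interval coincidence and conclude from (\ref{eq:nonuniformconstsumm}). Your closing remark about why one must pass through the symmetric set rather than $\Delta_1 = [0,1]$ is a correct and worthwhile observation, though not needed for the proof itself.
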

\begin{proof}
If $(a_1, \ldots, a_d)$ is an SD-tuple, then by Theorem \ref{thm:symmetricSDconsequences}, it holds that
\bes
\prod_{j=1}^d \Wmin{}([-1,1]) \subseteq \Wmin{}\left( \prod_{j=1}^d [-a_j, a_j] \right).
\ees
However, the minimal and maximal matrix convex set over an interval are identical, so we have
\bes
\Wmax{}([-1,1]^d) = \prod_{j=1}^d \Wmax{}([-1,1]) = \prod_{j=1}^d \Wmin{}([-1,1]) \subseteq  \Wmin{}\left( \prod_{j=1}^d [-a_j, a_j] \right).
\ees
Finally, from (\ref{eq:nonuniformconstsumm}) we conclude that $\sum\limits_{j=1}^d a_j^{-2} \leq 1$.
\end{proof}

Following the orthogonal case of \cite[Theorem 7.7]{DDSS}, given $a_1, \ldots, a_d > 0$ with $\sum\limits_{i=1}^d a_i^{-1} = 1$, we have that for any orthonormal system $P_i = v_i v_i^*$ of rank one projections such that $\sum\limits_{i=1}^d P_i = I_d$ (that is, $v_1, \ldots, v_d$ form a basis of $\bC^d$), the unit vector $w := a_1^{-1/2}v_1 + \ldots + a_d^{-1/2}v_d$ has $\langle a_i P_i w, w \rangle = 1$. Applying a change of basis, we find that for any tuple $(a_1, \ldots, a_d)$ of positive numbers with $\sum\limits_{i=1}^d a_i^{-1} = 1$, there exist $d \times d$ matrices $Q_1, \ldots, Q_d$ with
\be\label{eq:thetensors}
Q_i = Q_i^*, \hspace{.15 in} \text{Rank}(Q_i) = 1, \hspace{.15 in} \sigma(Q_i) = \{0, a_i\}, \hspace{.15 in} Q_i Q_j = 0 \text{ for } i \not= j, \hspace{.15 in} Q_i = \begin{pmatrix} 1 & * \\ * & * \end{pmatrix}.
\ee
It follows that in this circumstance, the dilation technique of \cite[Theorem 7.7]{DDSS} replaces a self-adjoint operator $T_i$ with $T_i \otimes Q_i$, so that $(T_1 \otimes Q_1, \ldots, T_d \otimes Q_d)$ is a normal tuple. We will use such a dilation, where we replace each $T_i$ with a tuple containing multiple operators.

\begin{theorem}\label{thm:positivescaling}
 For each $1 \leq i \leq d$, let $K_i \subseteq \bR^{n_i}$ be a compact convex set with $0 \in K_i$ for each $i$. For positive numbers $t_1, \ldots, t_d$, let $L[t_1, \ldots, t_d] \subseteq \prod\limits_{t=1}^d t_i K_i$ be the convex hull of all the sets $\{0\} \times t_{i} K_{i} \times \{0\}$, $1 \leq i \leq d$, where $0$ denotes a tuple of zeroes of the appropriate size. Then for any positive numbers $a_1, \ldots, a_d$ with $\sum\limits_{i=1}^d a_i^{-1} \leq 1$, it follows that
\be\label{eq:prodminK_i}
\prod_{i=1}^n \Wmin{}\left( K_i\right) \subseteq \Wmin{}(L[a_1, \ldots, a_d]) \subseteq \Wmin{}\left( \prod_{i=1}^n a_i K_i \right)
\ee
holds. Consequently, it also holds that
\be\label{eq:prodmaxK_i}
\Wmax{}\left(\prod_{i=1}^n K_i \right) \subseteq \Wmin{}(L[\theta(K_1) a_1, \ldots, \theta(K_d) a_d]) \subseteq \Wmin{}\left( \prod_{i=1}^n \theta(K_i) a_i K_i \right)
\ee
and
\be\label{eq:posbadtheta}
\theta\left( \prod_{i=1}^n K_i  \right) \leq \inf_{a_i > 0, \hspace{2 pt} \sum a_i^{-1} \leq 1} \left[ \max_{1 \leq i \leq d} \hspace{4 pt} \theta(K_i) a_i  \right] \leq d \cdot \max_{1 \leq i \leq d} \theta(K_i).
\ee
\end{theorem}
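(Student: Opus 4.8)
### Proof Proposal for Theorem \ref{thm:positivescaling}

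My plan is to mimic the structure of the proof of Theorem \ref{thm:symmetricSDconsequences}, but replacing the SD-dilation (built from diamonds) with the rank-one tensor dilation recorded in \eqref{eq:thetensors}, which is the mechanism appropriate to the non-symmetric case where only $0 \in K_i$ is assumed. First I would fix $m \in \bZ^+$ and an arbitrary element of the left side of \eqref{eq:prodminK_i} at level $m$: that is, a conjoined tuple $M = (M^{[1]}, \ldots, M^{[d]})$ of self-adjoint $m \times m$ matrices, where each subtuple $M^{[i]}$ is normal with joint spectrum $\sigma(M^{[i]}) \subseteq K_i$. Since each $M^{[i]}$ is a commuting family of self-adjoint matrices, jointly diagonalize it: write $M^{[i]}_j = \sum_{k=1}^m \lambda^{[i]}_{j,k} P^{[i]}_k$ for a common orthonormal system of rank-one projections $P^{[i]}_k$ (shared across $j$ for fixed $i$) with $\sum_k P^{[i]}_k = I_m$, where each eigenvalue tuple $(\lambda^{[i]}_{1,k}, \ldots, \lambda^{[i]}_{n_i,k})$ lies in $K_i$.

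Next, take positive numbers $a_1, \ldots, a_d$ with $\sum_i a_i^{-1} \le 1$; by enlarging, we may assume $\sum_i a_i^{-1} = 1$ (if the sum is strictly less, replace the $a_i$ by smaller values, or absorb the slack — here a containment $L[a] \subseteq L[a']$ for $a_i \le a_i'$ makes this harmless). Invoke \eqref{eq:thetensors} to obtain $d \times d$ matrices $Q_1, \ldots, Q_d$ that are self-adjoint, rank one, with $\sigma(Q_i) = \{0, a_i\}$, pairwise products $Q_i Q_j = 0$ for $i \ne j$, and a distinguished unit vector $w$ (the $(1,1)$ position above) with $\langle Q_i w, w\rangle = 1$ for all $i$. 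Now form the dilated tuple by replacing $M^{[i]}_j$ with
\[
\widetilde{M}^{[i]}_j := \sum_{k=1}^m \lambda^{[i]}_{j,k}\, \big(P^{[i]}_k \otimes Q_i\big)
\]
acting on $\bC^m \otimes \bC^d$. For fixed $i$, since $\{P^{[i]}_k\}_k$ are commuting projections and $Q_i$ is a fixed self-adjoint matrix, the $\widetilde{M}^{[i]}_j$ commute among themselves; for $i \ne i'$, the operators $P^{[i]}_k \otimes Q_i$ and $P^{[i']}_{k'} \otimes Q_{i'}$ commute because $Q_i Q_{i'} = 0$ forces the product to vanish in the second tensor leg (and likewise in the opposite order) — this is exactly the trick that makes the conjoined tuple $\widetilde{M} = (\widetilde{M}^{[1]}, \ldots, \widetilde{M}^{[d]})$ normal. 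The isometry $V : \bC^m \to \bC^m \otimes \bC^d$, $\xi \mapsto \xi \otimes w$, satisfies $V^* \widetilde{M}^{[i]}_j V = \sum_k \lambda^{[i]}_{j,k} P^{[i]}_k \langle Q_i w, w\rangle = M^{[i]}_j$, so $\widetilde{M}$ dilates $M$. It remains to identify the joint spectrum: diagonalizing simultaneously, the joint spectrum of $\widetilde{M}$ is contained in the set of tuples obtained by, for each $i$, either outputting $0 \in \prod t_i K_i$ (when the $Q_i$-eigenvalue is $0$) or outputting $a_i \cdot (\lambda^{[i]}_{1,k},\ldots) \in a_i K_i$ in the $i$-th block and $0$ elsewhere (when it is $a_i$); since $\{P^{[i]}_k \otimes Q_i\}$ over all $i,k$ are mutually orthogonal idempotent-scaled, at most one block is nonzero at a time, so every joint eigenvalue lands in $\bigcup_i (\{0\}\times a_i K_i \times \{0\}) \subseteq L[a_1,\ldots,a_d]$. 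Hence $\sigma(\widetilde{M}) \subseteq L[a_1,\ldots,a_d]$, giving $M \in \Wmin{}(L[a_1,\ldots,a_d])$ and proving the first inclusion of \eqref{eq:prodminK_i}; the second inclusion $L[a_1,\ldots,a_d] \subseteq \prod_i a_i K_i$ is immediate from the definition of $L$. If $\cH$ is infinite-dimensional one applies Proposition \ref{prop:introcara} to pass from the matrix-level statement to the operator statement, exactly as in Corollary \ref{cor:contractionSDdilation}.

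Finally, the consequences: \eqref{eq:prodmaxK_i} follows by applying \eqref{eq:prodminK_i} to the sets $\theta(K_i) K_i$ together with $\Wmax{}(\prod K_i) = \prod \Wmax{}(K_i) \subseteq \prod \Wmin{}(\theta(K_i) K_i)$ and the identity $\theta(K_i)\cdot\big(\{0\}\times a_i K_i\times\{0\}\big)$-scaling that yields $L[\theta(K_1)a_1,\ldots] \supseteq \prod$-image; and \eqref{eq:posbadtheta} is read off by taking the infimum over admissible $(a_i)$ of $\max_i \theta(K_i) a_i$, with the crude bound $d \cdot \max_i \theta(K_i)$ coming from the feasible choice $a_i = d$ for all $i$ (which satisfies $\sum a_i^{-1} = 1$). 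The main obstacle I anticipate is bookkeeping the joint spectrum of $\widetilde{M}$ correctly — verifying that the simultaneous diagonalization really does force at most one coordinate block to be nonzero on each common eigenspace, so that the joint spectrum sits inside the \emph{convex hull generators} of $L$ rather than merely inside the product $\prod a_i K_i$; this is where the precise rank-one/orthogonality structure of the $Q_i$ in \eqref{eq:thetensors}, and the fact that $0 \in K_i$, both get used in an essential way.
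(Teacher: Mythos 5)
Your proposal is correct and follows essentially the same route as the paper: the dilation $\sum_k \lambda^{[i]}_{j,k}\,(P^{[i]}_k \otimes Q_i)$ is exactly the paper's $M^{[i]}_j \otimes Q_i$ with the rank-one orthogonal $Q_i$ of \eqref{eq:thetensors}, and the normality, compression, and joint-spectrum analysis (at most one block nonzero, landing in the generators of $L[a_1,\ldots,a_d]$) match the paper's argument, as do the deductions of \eqref{eq:prodmaxK_i} and \eqref{eq:posbadtheta}. The only quibble is the slip ``by enlarging'' where you mean shrinking the $a_i$ to force $\sum a_i^{-1}=1$, which your own parenthetical already corrects.
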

\begin{proof}
Since $0 \in K_i$, we may shrink the $a_i$ so that $\sum\limits_{i=1}^d a_i^{-1} = 1$. From (\ref{eq:thetensors}), there are mutually orthogonal projections $P_1, \ldots, P_d \in M_d$ such that $Q_i := a_i P_i$ has entry $1$ in the top-left corner. For each $i$, let $M^{[i]}\in \cB(H)^{n_i}_{sa}$ be a normal tuple with $\sigma(M^{[i]}) \subseteq K_i$. Dilate each operator $M^{[i]}_j$ by choosing 
\bes
N^{[i]}_j := M^{[i]}_j \otimes Q_i.
\ees
Then the conjoined tuple $N = (N^{[1]}, \ldots, N^{[d]})$ is a normal dilation of $M = (M^{[1]}, \ldots, M^{[d]})$. The spectrum of each $N^{[i]}$ is in $a_i K_i$, and moreover $N^{[i]}_j N^{[k]}_l = 0$ if $i \not = k$. It follows that the joint spectrum of $N$ is contained in $L([a_1, \ldots, a_d])$, and we may conclude that (\ref{eq:prodminK_i}) holds. Next, $(\ref{eq:prodmaxK_i})$ follows from $(\ref{eq:prodminK_i})$, as any tuple of operators with numerical range in $K_i$ admits a normal dilation with joint spectrum in $\Wmin{}(\theta(K_i) K_i)$, by definition. Finally, (\ref{eq:posbadtheta}) follows immediately from $(\ref{eq:prodmaxK_i})$.
\end{proof}

\begin{corollary}
If $(a_1, \ldots, a_d)$ is a tuple of positive numbers with $\sum\limits_{i=1}^d a_i^{-1} \leq 1$, then $(a_1, \ldots, a_d)$ is an SD-tuple. Consequently, $\USD{d} \leq d$.
\end{corollary}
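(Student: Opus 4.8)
The plan is to obtain this as a direct specialization of Theorem \ref{thm:positivescaling}, taking each $K_i$ to be the standard simplex $\Delta_n \subseteq \bR^{n}$. Since $0 \in \Delta_n$ and the hypothesis $\sum_{i=1}^d a_i^{-1} \leq 1$ is exactly what Theorem \ref{thm:positivescaling} requires, the containment (\ref{eq:prodminK_i}) applied with $K_i = \Delta_n$ gives, for each fixed $n \in \bZ^+$,
\bes
\prod_{i=1}^d \Wmin{}(\Delta_n) \subseteq \Wmin{}\left( \prod_{i=1}^d a_i \Delta_n \right).
\ees

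Next I would rewrite both sides to match the definition (\ref{eq:SDdefcontainment}) of an SD-tuple. On the left, $\Delta_n$ is an $n$-dimensional simplex, so (\ref{eq:simplex2tod-1}) gives $\Wmax{}(\Delta_n) = \Wmin{}(\Delta_n)$, whence $\Wmax{}(\Delta_n^d) = \prod_{i=1}^d \Wmax{}(\Delta_n) = \prod_{i=1}^d \Wmin{}(\Delta_n)$. On the right, the standard diamond satisfies $\Delta_n \subseteq \Diam_n$ by definition, so $a_i \Delta_n \subseteq a_i \Diam_n$ for each $i$, and monotonicity of $\Wmin{}$ under set inclusion (immediate from (\ref{eq:wmindef})) yields $\Wmin{}\left( \prod_{i=1}^d a_i \Delta_n \right) \subseteq \Wmin{}\left( \prod_{i=1}^d a_i \Diam_n \right)$. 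Chaining these inclusions gives $\Wmax{}(\Delta_n^d) \subseteq \Wmin{}\left( \prod_{i=1}^d a_i \Diam_n \right)$ for every $n$, which is precisely the statement that $(a_1, \ldots, a_d)$ is an SD-tuple.

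For the bound on $\USD{d}$ I would apply the first part with $a_1 = \cdots = a_d = d$: then $\sum_{i=1}^d a_i^{-1} = 1 \leq 1$, so $(d, \ldots, d)$ is an SD-tuple, and by the definition of $\USD{d}$ as $\inf\{C > 0 : (C, \ldots, C) \text{ is an SD-tuple of length } d\}$, we conclude $\USD{d} \leq d$. I do not anticipate any genuine obstacle: essentially all of the work is already carried by Theorem \ref{thm:positivescaling}, and the only facts to verify by hand are the simplex equality $\Wmax{}(\Delta_n) = \Wmin{}(\Delta_n)$ and the trivial inclusion $\Delta_n \subseteq \Diam_n$, both of which are immediate.
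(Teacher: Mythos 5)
Your proposal is correct and follows essentially the same route as the paper: both specialize Theorem \ref{thm:positivescaling} to $K_i = \Delta_n$, use the simplex equality $\Wmax{}(\Delta_n) = \Wmin{}(\Delta_n)$ (the paper phrases this as $\theta(\Delta_n) = 1$ in its invocation of (\ref{eq:prodmaxK_i})), and finish with $\Delta_n \subseteq \Diam_n$ and monotonicity of $\Wmin{}$. The deduction $\USD{d} \leq d$ from the constant tuple $(d, \ldots, d)$ is likewise exactly as in the paper.
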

\begin{proof}
Theorem \ref{thm:positivescaling} shows that for any $n \in \bZ^+$, 
\bes
\Wmax{}(\Delta_n^d) \subseteq \Wmin{}\left( \prod_{j=1}^d \theta(\Delta_n) a_j \Delta_n \right) = \Wmin{}\left(\prod_{j=1}^d a_j \Delta_n \right) \subseteq \Wmin{}\left(\prod_{j=1}^d a_j \Diam_n \right),
\ees
so by definition $(a_1, \ldots, a_d)$ is an SD-tuple.
\end{proof}

All together, we have that for positive tuples $(a_1, \ldots, a_d)$, 
\be
\frac{1}{a_1} + \ldots + \frac{1}{a_d} \leq 1 \implies (a_1, \ldots, a_d) \text{ is an SD-tuple } \implies \frac{1}{a_1^2} + \ldots + \frac{1}{a_d^2} \leq 1,
\ee
and in particular
\be\label{eq:USD_bounds_sqrtandd}
\sqrt{d} \leq \mathcal{U}(d) \leq d.
\ee
Analogous to the computation (\ref{eq:constsumm}), we suspect that if one of the implications is an equivalence, then SD-tuples may be characterized by the identity $\sum\limits_{j=1}^d a_j^{-2} \leq 1$. However, the estimates of Theorem \ref{thm:positivescaling} are optimal for certain positive sets, such as $[0,1]^d$, once again emphasizing that dilation problems concerning symmetric sets are (or have the potential to be) considerably more flexible than those concerning positive sets. Recall that the difficulties of positive-to-positive dilation were abstracted in \cite{PSS} to \textit{simplex-pointed} sets.

\begin{definition}\label{def:simplex-pointed}
A convex body $K \subseteq \bR^d$ is \textit{simplex-pointed} at $x \in K$ if there exists a basis $v_1, \ldots, v_d$ of $\bR^d$ such that the convex hull of $x, x+v_1, \ldots, x+v_d$ is contained in $K$ and is a neighborhood of $x$ in $K$, where $K$ is equipped with the relative topology from $\bR^d$. Equivalently, there is an invertible affine transformation $A$ on $\bR^d$ such that $A(K) \subseteq [0, \infty)^d$, $A(x) = 0$, and $A(K)$ includes a neighborhood of $0$ in $[0, \infty)^d$.
\end{definition}

The following theorem expands upon the techniques in \cite{PSS} and gives a partial answer to \cite[Problem 8.6]{PSS}: if $\Wmax{}(K) \subseteq \Wmin{}(L)$, under what circumstances can we conclude that there is a simplex $\Pi$ with $K \subseteq \Pi \subseteq L$?

\begin{theorem}\label{thm:simplexcontainment}
Let $K \subseteq \bR^d$ be a convex body which is simplex-pointed at $x \in K$ with $v_1, \ldots, v_d$ as in Definition \ref{def:simplex-pointed}. Let $L$ be another convex body which is simplex-pointed at the same point $x$ with the same vector data $v_1, \ldots, v_d$, and suppose that $\Wmax{2}(K) \subseteq \Wmin{2}(L)$. If
\be\label{eq:simplexdirections}
t_i := \max \{t \geq 1: x + t v_i \in L \},
\ee
then the convex hull of $x, x+t_1 v_1, \ldots, x+t_d v_d$ is a simplex $\Pi$ with $K \subseteq \Pi \subseteq L$.

Consequently, if $K$ is simplex-pointed at $x = 0$ with vector data $v_1, \ldots, v_d$, and 
\bes
\mathcal{S} := \{\text{conv}(0, s_1v_1, \ldots, s_d v_d): s_1, \ldots, s_d \geq 1 \},
\ees
then
\bes
\theta(K) = \min \{C \geq 1: \text{there exists } \Pi \in \mathcal{S} \text{ with }  K \subseteq \Pi \subseteq C \cdot K \}.
\ees
\end{theorem}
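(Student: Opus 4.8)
The plan is to reduce to a normalized configuration, dispose of the containment $\Pi\subseteq L$ for free, establish $K\subseteq\Pi$ by a level-$2$ obstruction argument, and then read off the $\theta(K)$ formula by feeding $L=\theta(K)\cdot K$ into the first assertion.

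First I would apply the invertible affine map $A$ sending $x\mapsto 0$ and $x+v_i\mapsto e_i$; by (\ref{eq:affine}) and (\ref{eq:affineminmax}), together with the evident compatibility of the hypothesis $\Wmax{2}(K)\subseteq\Wmin{2}(L)$ with $A$, there is no loss in assuming $x=0$ and $v_i=e_i$. Definition \ref{def:simplex-pointed} then gives $K,L\subseteq[0,\infty)^d$ with $\Delta_d=\conv(0,e_1,\dots,e_d)$ a neighbourhood of $0$ in each of them; the numbers $t_i$ of (\ref{eq:simplexdirections}) are finite (compactness of $L$) and $\geq 1$, and $\Pi=\conv(0,t_1e_1,\dots,t_de_d)=\{y\geq 0:\sum_i y_i/t_i\leq 1\}$ is a genuine $d$-simplex. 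Since $0,t_1e_1,\dots,t_de_d\in L$ and $L$ is convex, $\Pi\subseteq L$.

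The heart of the matter is $K\subseteq\Pi$; because $K\subseteq[0,\infty)^d$ this says exactly that $\sum_i y_i/t_i\leq 1$ for every $y\in K$. I would argue by contradiction: suppose $p\in K$ has $\sum_i p_i/t_i>1$. The goal is to produce a $2\times 2$ self-adjoint tuple $X$ with $\mathcal{W}_1(X)\subseteq K$ but $X\notin\Wmin{2}(L)$, contradicting $\Wmax{2}(K)\subseteq\Wmin{2}(L)$. A scalar tuple cannot serve here, since it only yields $K\subseteq L$; so $X$ must be a genuine $2\times 2$ tuple whose joint numerical range is an ellipse lying inside $K$, and one must then verify that this ellipse admits no normal dilation with spectrum in $L$. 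The construction should use the simplex-pointedness of both bodies at once: near $0$ the body $K$ contains a full orthant-corner, which provides the room to inscribe a sufficiently eccentric ellipse that also reaches out toward $p$, while the caps $t_i=\max\{t\geq 1:te_i\in L\}$ measure how little $\Wmin{2}(L)$ can contain in the axis directions, the quantitative deficit being precisely $\sum_i p_i/t_i>1$. Concretely I expect to build a matrix-valued affine function $\Phi(y)=C_0+\sum_i y_iC_i$ which is positive semidefinite on $L$ but has $\Phi(X)=C_0\otimes I_2+\sum_i C_i\otimes X_i\not\succeq 0$; since any $X\in\Wmin{2}(L)$ satisfies $\Phi(X)\succeq 0$ (apply the ampliated UCP compression coming from a normal dilation $N$ of $X$ with $\sigma(N)\subseteq L$ to $C_0\otimes 1+\sum_i C_i\otimes N_i$, which is positive by the spectral theorem), this is the desired contradiction. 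This is the step I expect to be the \emph{main obstacle}: it is a $2\times 2$ incarnation of the simplex-detection arguments of \cite{PSS}, and arranging the ellipse (equivalently, the certificate $\Phi$) to fit inside $K$ yet fail for $L$ requires a careful quantitative form of Definition \ref{def:simplex-pointed} — the naive thin-ellipse and tiny-ellipse candidates all lie in $\Wmin{2}(L)$, so the witness must be genuinely ``fat'' in the direction that the $t_i$ constrain.

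For the consequence, take any $C\geq 1$ and $\Pi\in\mathcal{S}$ with $K\subseteq\Pi\subseteq C\cdot K$. Since $\Pi$ is a simplex, $\Wmax{}(\Pi)=\Wmin{}(\Pi)$ by (\ref{eq:simplex2tod-1}), so $\Wmax{}(K)\subseteq\Wmax{}(\Pi)=\Wmin{}(\Pi)\subseteq\Wmin{}(C\cdot K)=C\cdot\Wmin{}(K)$, whence $\theta(K)\leq C$; thus $\theta(K)$ is a lower bound for the right-hand set. Conversely, for any $C>\theta(K)$ one has $\Wmax{}(K)\subseteq C\cdot\Wmin{}(K)=\Wmin{}(C\cdot K)$, and $C\cdot K$ is again a convex body which — by the normalization above, since $\Delta_d\subseteq K\subseteq C\cdot K\subseteq[0,\infty)^d$ and $\Delta_d$ is a neighbourhood of $0$ in $[0,\infty)^d$, hence in $C\cdot K$ — is simplex-pointed at $0$ with the same data $v_1,\dots,v_d$; so the first part applies with $L=C\cdot K$ and yields a simplex $\Pi\in\mathcal{S}$ with $K\subseteq\Pi\subseteq C\cdot K$, i.e.\ $C$ lies in the right-hand set. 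The right-hand set therefore contains $(\theta(K),\infty)$, and it is closed: if $C_n\downarrow C$ with witnessing simplices $\Pi_n=\conv(0,s^n_1v_1,\dots,s^n_dv_d)\in\mathcal{S}$, $s^n_i\geq 1$, then the $s^n_i$ are bounded (as $\Pi_n\subseteq C_nK$ is bounded), so a subsequence converges to a simplex $\Pi\in\mathcal{S}$ with $K\subseteq\Pi\subseteq C\cdot K$. Hence the right-hand set equals $[\theta(K),\infty)$ and its minimum is $\theta(K)$.
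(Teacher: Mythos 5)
Your normalization, the trivial containment $\Pi \subseteq L$, and the derivation of the $\theta(K)$ formula from the first assertion (including the closedness argument showing the minimum is attained) are all fine and match the paper. But the core claim $K \subseteq \Pi$ is not proved: you describe a contradiction strategy (a $2\times 2$ witness $X \in \Wmax{2}(K)\setminus\Wmin{2}(L)$ certified by a linear matrix inequality) and then explicitly state that you do not know how to construct it and that the naive candidates fail. Since that step is the entire content of the theorem, this is a genuine gap, not a routine detail to be filled in.

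The paper's argument is direct rather than by contradiction, and the missing ingredient is \cite[Lemma 6.3]{PSS}. Fix an interior point $c$ of $K$ and choose $2\times 2$ rank-one projections $P_1,\ldots,P_d$ with pairwise trivially intersecting ranges but $\|P_i - P_j\| < \varepsilon$; set $X = (c_1 P_1, \ldots, c_d P_d)$. For small $\varepsilon$ the joint numerical range $\mathcal{W}_1(X)$ lies in a thin neighborhood of the segment $\mathrm{conv}(0,c)$ inside the orthant, hence in $K$, so the hypothesis places $X$ in $\Wmin{2}(L)$. The point is that no nonzero positive $T$ satisfies $T \leq X_i$ and $T \leq X_j$ for $i \neq j$, so \cite[Lemma 6.3]{PSS} upgrades the normal dilation to one $Z$ with $\sigma(Z) \subseteq L$ and $Z_i Z_j = 0$ for $i \neq j$; its spectrum therefore sits on the coordinate axes inside $L$, hence in $\Pi$ by the definition of the $t_i$, giving $X \in \Wmin{}(\Pi)$. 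A vector state supported on $\mathrm{Ran}(P_1)$ then exhibits a point of $\mathcal{W}_1(X) \subseteq \Pi$ within $q\varepsilon$ of $c$, and letting $\varepsilon \to 0$ gives $c \in \Pi$; since $K$ is the closure of its interior, $K \subseteq \Pi$. Note that this construction in fact produces exactly the witness your contradiction scheme wanted (if $c \notin \Pi$, then for small $\varepsilon$ this $X$ cannot lie in $\Wmin{2}(L)$), but your intuition that the witness must be a ``fat'' ellipse points in the wrong direction: what does the work is a nearly scalar tuple of scaled rank-one projections whose range-disjointness rigidly constrains the support of any normal dilation.
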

\begin{proof}
After an invertible affine transformation (linear if $x = 0$), we may suppose that $K$ and $L$ are contained in $[0, \infty)^d$, $x = 0$, and the standard simplex $\Delta_d$ is contained in $K$ and $L$. Letting $e_1, \ldots, e_d$ denote the standard basis vectors in $\bR^d$, we wish to prove that if
\be\label{eq:sumteesplease}
t_i := \max \{t \geq 1: t e_i \in L\},
\ee
then the convex hull $\Pi$ of $0, t_1 e_1, \ldots, t_d e_d$ has $K \subseteq \Pi$ (as the containment $\Pi \subseteq L$ is trivial). 

Fix any interior point $c$ of $K$, and let $q = (d-1) \cdot \max\limits_{1 \leq i \leq d }\{c_i\}$. Let $\varepsilon > 0$ be small enough that the $q\varepsilon$-neighborhood $N_{q \varepsilon}$ of the line segment $\textrm{conv}(0, c)$ has $N_{q\varepsilon} \cap [0,\infty)^d \subseteq K$. Next, let $P = (P_1, \ldots, P_d)$ be a tuple of $2 \times 2$ rank 1 projections such that $\textrm{Ran}(P_i) \cap \textrm{Ran}(P_j) = \{0\}$ if $i \not= j$, but $||P_i - P_j|| < \varepsilon$. Setting $X = (c_1 P_1, \ldots, c_d P_d)$ and $Q = (c_1 P_1, c_2 P_1, \ldots, c_d P_1)$, we have that $\mathcal{W}_1(X) \subseteq [0, \infty)^d$ is within the $q\varepsilon$-neighborhood of $\mathcal{W}_1(Q) = \text{conv}(0, c)$. This implies two key facts. First, we have that $\mathcal{W}_1(X) \subseteq K$. Second, applying a vector state corresponding to a unit vector in $\textrm{Ran}(P_1)$ shows that
\be\label{eq:approximatingc}
\exists v \in \mathcal{W}_1(X) \text{ with } ||v - c|| < q\varepsilon.
\ee

Since $\mathcal{W}_1(X) \subseteq K$ and $X$ is a tuple of $2 \times 2$ matrices, it follows that $X \in \Wmax{2}(K) \subseteq \Wmin{2}(L)$, and $X$ admits a normal dilation with joint spectrum in $L \subseteq [0,\infty)^d$. By design, the tuple $X$ has the property that if $i \not= j$ and $T$ is a matrix with $0 \leq T \leq X_i$ and $0 \leq T \leq X_j$, then $T = 0$. Therefore, by \cite[Lemma 6.3]{PSS}, there is a possibly distinct normal dilation $Z$ of $X$ such that $\sigma(Z) \subseteq L \cup \{0\} =  L$ and $Z_i Z_j = 0$ for $i \not= j$. That is, $\sigma(Z) \subseteq L$ consists of points which have at most one nonzero coordinate. From (\ref{eq:sumteesplease}) and the definition of $\Pi$ immediately thereafter, we have that $\sigma(Z) \subseteq \Pi$ and $X \in \Wmin{}(\Pi)$. We then conclude from (\ref{eq:approximatingc}) that there is a point $v \in \mathcal{W}_1(X) \subseteq \Wmin{1}(\Pi) = \Pi$ such that $||v - c|| < q \varepsilon$. Since we may repeat the procedure for arbitrarily small $\varepsilon$, and $\Pi$ (which does not depend on $\varepsilon$) is closed, we have that $c \in \Pi$. Next, $c$ was an arbitrary interior point of $K$, so the interior of $K$ is contained in $\Pi$. Finally, $K$ is a convex body, so $K$ is the closure of its interior, and it follows that $K \subseteq \Pi$.
\end{proof}

Theorem \ref{thm:simplexcontainment} directly generalizes the computations of dilation scale in \cite[Theorem 6.4]{PSS}. Moreover, it also implies \cite[Theorem 8.8]{PSS}, as the technical condition given therein shows that $K$ is a simplex-pointed set, $\Delta$ is a simplex containing $K$ that emanates from $x$ in the same direction as the given vector data $v_1, \ldots, v_d$, and $\Delta$ is minimal among simplices which contain $K$. Thus, we find that the particularly precise perturbation method used in the proof of \cite[Theorem 8.8]{PSS} is ultimately not necessary. Applied to products of simplices, Theorem \ref{thm:simplexcontainment} implies that the estimates of Theorem \ref{thm:positivescaling} are optimal when each $K_i$ is a simplex.

\begin{corollary}\label{cor:prodpossimplex}
For each $1 \leq i \leq d$, let $n_i \geq 1$ and let $K_i$ be a simplex of dimension $n_i$ with $0 \in K_i$. Then for positive scalars $a_1, \ldots, a_d$, $\Wmax{}\left(\prod\limits_{i=1}^d K_i \right) = \prod\limits_{i=1}^d \Wmin{}\left(K_i\right)$ is contained in $\Wmin{}\left(\prod\limits_{i=1}^d a_i K_i \right)$ if and only if $\sum\limits_{i=1}^d a_i^{-1} \leq 1$. In particular, $\theta\left(\prod\limits_{i=1}^d K_i\right) = d$.
\end{corollary}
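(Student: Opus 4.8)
\emph{Proof proposal.} The plan is to obtain the displayed equality and the ``if'' half of the equivalence directly from results already established, and to derive the ``only if'' half by feeding the product $\prod_i K_i$ into Theorem~\ref{thm:simplexcontainment}. For the equality, each $K_i$ is a simplex, so $\Wmin{}(K_i)=\Wmax{}(K_i)$ by~(\ref{eq:simplex2tod-1}), and since $\Wmax{}$ distributes over graded products we get $\Wmax{}\!\left(\prod_i K_i\right)=\prod_i\Wmax{}(K_i)=\prod_i\Wmin{}(K_i)$. For the implication $\sum_i a_i^{-1}\le 1\Rightarrow\prod_i\Wmin{}(K_i)\subseteq\Wmin{}\!\left(\prod_i a_iK_i\right)$, I would simply invoke Theorem~\ref{thm:positivescaling}, whose hypothesis $0\in K_i$ holds and whose conclusion~(\ref{eq:prodminK_i}) is exactly this containment; taking $a_i=d$ also records $\theta\!\left(\prod_i K_i\right)\le d$.

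For the converse, suppose $\Wmax{}\!\left(\prod_i K_i\right)\subseteq\Wmin{}\!\left(\prod_i a_iK_i\right)$. I would first normalize: since $K_i$ is a simplex with $0$ a vertex, there is an invertible linear map $A_i$ with $A_i(0)=0$ and $A_i(\Delta_{n_i})=K_i$, and the conjoined block-diagonal map $A=A_1\oplus\cdots\oplus A_d$ satisfies $A\!\left(\prod_i\Delta_{n_i}\right)=\prod_iK_i$ and $A\!\left(\prod_i a_i\Delta_{n_i}\right)=\prod_i a_iK_i$; applying~(\ref{eq:affineminmax}) to $A$, we may assume $K_i=\Delta_{n_i}$. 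Write $N=\sum_i n_i$, $K=\prod_i\Delta_{n_i}\subseteq\bR^N$ and $L=\prod_i a_i\Delta_{n_i}$; the first level of the containment forces $\Delta_{n_i}\subseteq a_i\Delta_{n_i}$, i.e.\ $a_i\ge 1$ for every $i$. The heart of the argument is that $K$ and $L$ are convex bodies, both simplex-pointed at $0$ with the \emph{same} vector data --- the combined list of standard basis vectors $e^{(i)}_k$ ($1\le i\le d$, $1\le k\le n_i$). Indeed $\text{conv}\big(0,\{e^{(i)}_k\}_{i,k}\big)=K$, and since a simplex has the same tangent cone at $0$ as any positive dilate of itself, this convex hull is also a relative neighborhood of $0$ in $L$ (here $a_i\ge 1$ guarantees $K\subseteq L$). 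Theorem~\ref{thm:simplexcontainment} then applies --- its hypothesis $\Wmax{2}(K)\subseteq\Wmin{2}(L)$ is just the second level of what we assumed --- and, as each $e^{(i)}_k$ is an extreme point of $\Delta_{n_i}$, we get $t^{(i)}_k:=\max\{t\ge 1:t\,e^{(i)}_k\in L\}=a_i$. The theorem produces a simplex $\Pi=\text{conv}\big(0,\{a_ie^{(i)}_k\}_{i,k}\big)=\{x\ge 0:\sum_i a_i^{-1}\sum_k x^{(i)}_k\le 1\}$ with $K\subseteq\Pi$, and evaluating $K\subseteq\Pi$ at the vertex of $\prod_i\Delta_{n_i}$ having a single coordinate equal to $1$ in each block yields precisely $\sum_i a_i^{-1}\le 1$.

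The ``in particular'' then follows by setting $a_1=\dots=a_d=C$ in the equivalence: $\Wmax{}\!\left(\prod_i K_i\right)\subseteq C\cdot\Wmin{}\!\left(\prod_i K_i\right)$ holds iff $d/C\le 1$, so $\theta\!\left(\prod_i K_i\right)=d$. I expect the only genuine friction to be the verification that $\prod_iK_i$ and $\prod_i a_iK_i$ are simplex-pointed at $0$ with a shared vector datum --- essentially the tangent-cone and relative-neighborhood bookkeeping --- together with matching the simplex output by Theorem~\ref{thm:simplexcontainment} to the explicit $\Pi$; once these are in hand, reading off $\sum_i a_i^{-1}\le 1$ from $K\subseteq\Pi$ is immediate.
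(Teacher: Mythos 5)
Your proposal is correct and follows essentially the same route as the paper: the displayed equality and the forward implication are quoted from the simplex characterization and Theorem~\ref{thm:positivescaling}, and the converse reduces to standard simplices and applies Theorem~\ref{thm:simplexcontainment}, reading off $\sum_{i} a_i^{-1}\le 1$ from the same test point $(1,0_{n_1-1},1,0_{n_2-1},\ldots,1,0_{n_d-1})$. The only difference is that you spell out the simplex-pointedness and vector-data bookkeeping (including the tacit use of $0$ being a vertex of each $K_i$, which the linear normalization requires) that the paper's one-line reduction leaves implicit.
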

\begin{proof}
The forward direction is proved in Theorem \ref{thm:positivescaling}. For the converse, we may assume $K_i$ is the standard simplex by applying a linear transformation and restriction to a proper subspace, if necessary. Since $\prod\limits_{i=1}^d K_i$ is then simplex-pointed at $x = 0$ with vector data given by the standard basis $e_1, \ldots, e_{n_1+\ldots+n_d}$, Theorem \ref{thm:simplexcontainment} shows that if $\Wmax{}\left( \prod\limits_{i=1}^d K_i \right) \subseteq \Wmin{}\left(\prod\limits_{i=1}^d a_i K_i \right)$, then the simplex $L$ spanned by $0, a_1 e_1, \ldots, a_1 e_{n_1}$, $a_2 e_{n_1+1}, \ldots, a_2 e_{n_1+n_2}$, $\ldots, a_d e_{n_1+\ldots+n_{d-1}+1}, \ldots, a_d e_{n_1+\ldots+n_d}$ must have $\prod\limits_{i=1}^d K_i \subseteq L$. This implies $\sum\limits_{i=1}^d a_i^{-1} \leq 1$ by consideration of the point $(1, 0_{n_1-1}, 1, 0_{n_2-1}, \ldots, 1, 0_{n_d-1})$.
\end{proof}

%%%%%%%%%%%%%%%%%%%%%%%%%%%%%%%%%%%%%%%%%%%%%%%
%%%%%%%%%%%%%%%%%%%%%%%%%%%%%%%%%%%%%%%%%%%%%%%

\section{Compactness and Minimality}\label{sec:compmin}

In this section, we primarily consider problems related to the matrix ranges of compact operator tuples. First, recall that Proposition \ref{prop:introcara} demonstrates that if $T$ is an operator tuple with $\mathcal{W}(T) \subseteq \Wmin{}(K)$, then $T$ has a normal dilation $N$ with joint spectrum in $K$. Moreover, if $T$ is a matrix tuple, then there exists a choice of $N$ which is also a matrix tuple. However, no claim is made about the preservation of compactness if $T$ acts on an infinite-dimensional space. Below we prove an approximate result in this direction.

\begin{proposition}\label{prop:approxcompactdilation}
Let $T \in \cK(H)^d$ be a tuple of compact operators acting on an infinite-dimensional space, with $\mathcal{W}(T) \subseteq \Wmin{}(K)$. Then for any $\varepsilon > 0$, there exists a compact normal dilation $N$ of $T$ such that $N$ decomposes as a direct sum $A \oplus B$ with $||A_i|| < \varepsilon$ and $\sigma(B) \subseteq (1 + \varepsilon) K$.
\end{proposition}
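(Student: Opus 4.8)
The plan is to isolate a finite-dimensional ``core'' of $T$ that is dilated exactly by the finite-dimensional case of Proposition~\ref{prop:introcara}, and to push everything else into a small-norm compact summand. After the usual reduction to self-adjoint $T$ acting on a separable space (splitting off the reducing subspace on which $T$ vanishes, which can be folded into $A$ as a zero summand), I would use compactness as follows: for an increasing sequence of finite-rank projections $P_n\nearrow I$ one has $\|T_i-P_nT_iP_n\|\to 0$, so given a parameter $\delta$ (to be taken small compared with $\varepsilon/\USD{d}$) one may fix a finite-rank $P$ with $\|T_i-PT_iP\|<\delta$ for all $i$, and moreover enlarge $P$ so that its (finite-dimensional) range $H_0:=PH$ also contains the ranges of the finite-rank off-diagonal blocks $PT_i(I-P)$. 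The compression $\hat T:=(PT_iP|_{H_0})_i$ lies in $\mathcal{W}(T)\subseteq\Wmin{}(K)$, hence by the finite-dimensional part of Proposition~\ref{prop:introcara} it has a finite-dimensional normal dilation $B$, acting on a space $\mathcal{K}_B$ via an isometry $W\colon H_0\to\mathcal{K}_B$, with $\sigma(B)\subseteq K\subseteq(1+\varepsilon)K$. One then sets $N=B\oplus A$ for a compact normal $A$ with $\|A_i\|<\varepsilon$, still to be constructed, and seeks an isometry $V\colon H\to\mathcal{K}_B\oplus\mathcal{K}_A$ with $V^*(B_i\oplus A_i)V=T_i$.

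To build $A$ I would iterate the same peeling on the remainder $R:=(T_i-\hat T_i\oplus 0)_i$, which is a tuple of compact operators of small norm: at each stage split off a large finite-rank corner of the current remainder, dilate it — now only needing small norm, so Corollary~\ref{cor:contractionSDdilation} supplies a finite-dimensional normal piece of norm at most $\USD{d}$ times the current remainder norm — absorb the relevant finite-rank cross-terms, and pass the remainder (strictly smaller, because the corners are taken large, using compactness again) to the next stage. Choosing the corners large enough that the remainder norms decay geometrically makes the dilating pieces $a^{(k)}$ satisfy $\|a^{(k)}_i\|\to 0$, so $A:=\bigoplus_k a^{(k)}$ is compact, normal, and $\|A_i\|\le\USD{d}\cdot\|R_i\|<\varepsilon$. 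This produces $N=B\oplus A$ with the two required pieces; what remains is to check that the partial isometries assembled at the successive stages genuinely combine into one isometry $V$ implementing an \emph{exact} dilation of $T$.

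The step I expect to be the main obstacle is precisely that verification: a naive direct-sum dilation only reproduces the block-diagonal part of $T$, so the off-diagonal blocks of $T$ coupling the finite-dimensional core to the infinite-dimensional tail must be handled by hand, and the obvious isometries into the various pieces have initial projections that overlap rather than sum to $I$. The remedy is to arrange each corner dilation to dilate a slightly \emph{enlarged} compression of $T$, one that already ``sees'' the finite-rank cross-terms, so that its dilation space carries a subspace into which those cross-terms can be routed by the gluing isometry, and then to let whatever cross-terms survive be reabsorbed, at strictly smaller scale, at the next stage of the iteration. The $(1+\varepsilon)$ enlargement of $K$ and the $\varepsilon$ tolerance on $\|A_i\|$ provide exactly the slack needed for this bookkeeping to close up, and the convergence of the surviving cross-terms to $0$ is what upgrades the approximate dilation into an exact one.
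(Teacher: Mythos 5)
Your first half matches the paper's strategy: write $T_i = PT_iP + (T_i - PT_iP)$ for a finite-rank projection $P$ with $\|T_i - PT_iP\|<\delta$, dilate the finite-rank compression exactly via Proposition~\ref{prop:introcara}, and dilate the small remainder to a small compact normal tuple. The genuine gap is the step you yourself flag as ``the main obstacle'': how two separate normal dilations of the two \emph{summands} $PT_iP$ and $T_i-PT_iP$ get reassembled into one normal dilation of their \emph{sum} $T_i$. The paper does not route cross-terms through enlarged compressions; it uses the device of (\ref{eq:thetensors}): take orthogonal rank-one positive $2\times 2$ matrices $Q_1,Q_2$ with top-left entry $1$, $\sigma(Q_1)=\{0,a\}$, $\sigma(Q_2)=\{0,b\}$, $a^{-1}+b^{-1}=1$, and set $N_i = R_i\otimes Q_1 + S_i\otimes Q_2$, where $R$ is a compact normal dilation of the remainder and $S$ a finite-rank normal dilation of $PTP$. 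Since $Q_1Q_2=0$ this tuple is normal and unitarily equivalent to $aR\oplus bS$, while the top-left-entry condition makes its compression to $H\otimes e_1$ equal to the sum of the two compressions, i.e.\ to $T$. This is exactly where the statement's two tolerances come from: $b=1+\varepsilon$ forces $\sigma(B)\subseteq (1+\varepsilon)K$ (so your claim that $\sigma(B)\subseteq K$ can be kept is more than the method delivers, and sits in tension with your own remark that the $(1+\varepsilon)$ slack is what lets the bookkeeping close), and $a=(1+\varepsilon)/\varepsilon$ is large but harmless because the remainder has norm of order $\delta$. Without this weighting, an isometry $V$ cannot have $V^*V=I$ while reproducing both summands at full weight --- precisely the overlap of initial projections you noticed, left unresolved.

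The second issue is your iterative construction of $A$. The same gluing problem recurs at every stage of the peeling (successive shells of the remainder have cross-terms with one another), so the iteration does not evade the obstacle but multiplies it, and compounding the stage-by-stage scalings $a_k,b_k$ over infinitely many stages would need its own convergence argument that is not given. The paper sidesteps all of this by dilating the \emph{entire} infinite-dimensional remainder in a single step, using the explicit compactness-preserving construction from the second half of the proof of \cite[Theorem 7.4]{DDSS}, which directly yields a compact normal $R$ with $\|R_i\|<d\delta$. You are right that Corollary~\ref{cor:contractionSDdilation} does not preserve compactness for infinite-dimensional input (it only controls dimension in the finite-dimensional case), which is why some such device is needed; but the one you propose is not carried out. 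In short, the architecture is correct, but the two engines that make it run --- the one-step compact dilation of the remainder and the weighted orthogonal gluing of (\ref{eq:thetensors}) --- are exactly the parts missing from the proposal.
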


\begin{proof}
We may assume that $T$ consists of compact self-adjoint operators. Since $T$ acts on an infinite-dimensional space, $0$ belongs to $\mathcal{W}_1(T) \subseteq K$. Given $\delta > 0$, let $F_1, \ldots, F_d$ be self-adjoint finite rank operators with $||T_i - F_i|| < \delta/2$. Then if $P$ is the finite-dimensional projection onto the sum of the kernels and cokernels of the $F_i$, it follows from conjugation by $P$ that $||P F_i P - PT_iP|| = ||F_i - P T_i P || < \delta/2$. Therefore, $||T_i - PT_iP|| < \delta$. 

First, consider the tuple $X = (T_1 - PT_1P, \ldots, T_d - PT_dP)$. The operators in the tuple are compact, but since $P$ need not be a reducing subspace for $T$, $\mathcal{W}_1(X)$ might not be contained in $K$. However, since $||T_i - P T_i P|| < \delta$, the dilation technique of the second half of the proof of \cite[Theorem 7.4]{DDSS}, which preserves compactness of self-adjoint operators, produces a compact normal dilation $R$ of $X$ with $||R_i|| < d \delta$.

Next, we dilate $Y = (PT_1P, \ldots, PT_dP)$, which is unitarily equivalent to $(M_1 \oplus 0, \ldots, M_d \oplus 0)$ for some matrix tuple $M$ with $M \in \mathcal{W}(T) \subseteq \Wmin{}(K)$. Since $M$ is a matrix tuple, it admits a normal matrix dilation with spectrum in $K$ by Proposition \ref{prop:introcara}, so $Y$ admits a finite rank (hence compact) normal dilation $S$ with $\sigma(S) \subseteq K \cup \{0\} = K$.

Finally, we combine the two dilations. The tuples $R$ and $S$ might act on different spaces, but this is easily remedied with the addition of zero summands, if necessary. Now, $R +S$ is a dilation of $T$, but it might not be a normal tuple, as the $R_i$ and $S_j$ might not commute. Given $a, b > 0$ with $\frac{1}{a} + \frac{1}{b} = 1$, let $Q_1$ and $Q_2$ be positive $2 \times 2$ matrices with entry $1$ in the top left corner such that $\sigma(Q_1) = \{0, a\}$, $\sigma(Q_2) = \{0, b\}$, and $Q_1 Q_2 = Q_2 Q_1 = 0$, following (\ref{eq:thetensors}). Let $A_i = R_i \otimes Q_1$ and $B_j = S_j \otimes Q_2$, so that $A_i B_j = 0 = B_j A_i$, and the orthogonal sum $N = A + B$ is a normal dilation of $T$. We have that $||A_i|| < a d \delta$ for each $i$, and $\sigma(B) \subseteq bK$. Given $\varepsilon > 0$, we complete the proof by noting that we could have chosen $b = 1 + \varepsilon$, $a = \frac{1}{1 - 1/b}$, and $\delta = \frac{\varepsilon}{ad}$.
\end{proof}

Combining Proposition \ref{prop:approxcompactdilation} with (\ref{eq:nonuniformconstsumm}) (from \cite[Theorem 6.7]{PSS}), we find that the dilation of compact self-adjoint contractions may be bounded in the following sense.

\begin{corollary}\label{cor:approxcompcontractions}
Let $(a_1, \ldots, a_d)$ be a tuple of positive numbers such that $\sum\limits_{i=1}^d a_i^{-2} < 1$. Then given any tuple $T \in \cK(H)^d_{sa}$ of compact self-adjoint contractions, there exists a normal dilation $N$ of compact self-adjoint operators with $||N_i|| \leq a_i$.
\end{corollary}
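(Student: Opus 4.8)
The plan is to deduce Corollary \ref{cor:approxcompcontractions} by feeding the hypothesis into Proposition \ref{prop:approxcompactdilation} with the cube $K = [-1,1]^d$, and then assembling the two pieces of the resulting normal dilation into a single normal dilation with the prescribed norm bounds, using (\ref{eq:nonuniformconstsumm}) to control the large piece. First I would reduce to the infinite-dimensional case: if $H$ is finite-dimensional, simply embed $T$ into an infinite-dimensional Hilbert space by adding a zero summand $0 \in \cK(H')^d_{sa}$, which changes neither compactness nor the property of being a self-adjoint contraction; the matrix range can only shrink (or stay the same), so we still have $\cW(T \oplus 0) \subseteq \Wmax{}([-1,1]^d) = \Wmin{}([-1,1]^d)$ — wait, the cube is not a simplex, so $\Wmax{}$ and $\Wmin{}$ over the cube differ. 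This is the crux: we cannot directly say $\cW(T) \subseteq \Wmin{}([-1,1]^d)$.

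So the actual approach must route through the scaling. Given $(a_1,\dots,a_d)$ with $\sum a_i^{-2} < 1$, pick $\varepsilon > 0$ small enough that $\sum_{i} (a_i/(1+\varepsilon))^{-2} \le 1$ still holds, i.e. $(1+\varepsilon)^2 \sum a_i^{-2} \le 1$. By (\ref{eq:nonuniformconstsumm}), this means $\Wmax{}([-1,1]^d) \subseteq \Wmin{}\left(\prod_{i=1}^d [-b_i, b_i]\right)$ where $b_i := a_i/(1+\varepsilon)$. Now $T$ is a tuple of self-adjoint contractions, so $\cW_1(T) \subseteq [-1,1]^d$ and hence $\cW(T) \subseteq \Wmax{}([-1,1]^d) \subseteq \Wmin{}(L)$ with $L := \prod_i [-b_i, b_i]$. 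Apply Proposition \ref{prop:approxcompactdilation} with this $K = L$ and with the same $\varepsilon$: we obtain a compact normal dilation $N = A \oplus B$ of $T$ with $\|A_i\| < \varepsilon$ and $\sigma(B) \subseteq (1+\varepsilon) L = \prod_i [-(1+\varepsilon)b_i, (1+\varepsilon)b_i] = \prod_i [-a_i, a_i]$. Then $N_i = A_i \oplus B_i$ is normal (direct sum of normals that, within $A$ and within $B$ respectively, commute — and $A$, $B$ act on orthogonal summands), compact, self-adjoint, and $\|N_i\| = \max(\|A_i\|, \|B_i\|) \le \max(\varepsilon, a_i) = a_i$ provided $\varepsilon \le a_i$ for all $i$, which we may arrange by shrinking $\varepsilon$ further. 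That gives the claimed $\|N_i\| \le a_i$.

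The main obstacle, then, is essentially bookkeeping on $\varepsilon$: one must choose $\varepsilon$ small enough to satisfy three constraints simultaneously — $(1+\varepsilon)^2 \sum a_i^{-2} \le 1$ (so (\ref{eq:nonuniformconstsumm}) applies to the shrunk cube), $\varepsilon \le \min_i a_i$ (so the small summand doesn't exceed the target bound), and the implicit requirement that $0 \in \cW_1(T) \subseteq L$ needed inside the proof of Proposition \ref{prop:approxcompactdilation}, which holds automatically once $T$ acts on an infinite-dimensional space. Since $\sum a_i^{-2} < 1$ is a strict inequality, the first constraint leaves room, and all three can be met at once. I would also remark that one could instead apply Proposition \ref{prop:approxcompactdilation} with a slightly different split of the slack between the ``$\Wmin{}$ over a shrunk cube'' step and the ``$(1+\varepsilon)$ neighborhood'' step, but the cleanest writeup absorbs everything into one small parameter. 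A minor point to check explicitly: the direct sum of the two compact normal tuples $A$ and $B$ is again a \emph{jointly} normal tuple — each $N_i = A_i \oplus B_i$ is normal, and $N_i N_j = (A_iA_j) \oplus (B_iB_j) = (A_jA_i)\oplus(B_jB_i) = N_j N_i$ since $A$ and $B$ are each jointly normal — so this is immediate and needs only a sentence.
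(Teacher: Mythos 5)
Your argument is exactly the combination the paper intends (it states the corollary with only the remark that it follows from Proposition \ref{prop:approxcompactdilation} together with (\ref{eq:nonuniformconstsumm})), and your $\varepsilon$-bookkeeping — shrinking the target box to $\prod_i[-a_i/(1+\varepsilon),a_i/(1+\varepsilon)]$ using the strict inequality $\sum a_i^{-2}<1$, then absorbing both the $(1+\varepsilon)$-neighborhood and the small summand $A$ via $\varepsilon\le\min_i a_i$ — is correct, as is the reduction to infinite-dimensional $H$ by adjoining a zero summand. No gaps; this matches the paper's proof.
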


Similarly, there are bounds on the dilation of compact contractions which are not necessarily self-adjoint.

\begin{corollary}\label{cor:approxSDcompnotsacontractions}
Let $(a_1, \ldots, a_d)$ be a tuple of positive numbers such that for some $\varepsilon > 0$, $(1+\varepsilon) \cdot (a_1, \ldots, a_d)$ is an SD-tuple. Then given any tuple $T \in \cK(H)^d$ of compact contractions, there exists a normal dilation $N$ of compact operators with $||N_i|| \leq a_i$.
\end{corollary}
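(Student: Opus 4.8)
The plan is to reduce Corollary \ref{cor:approxSDcompnotsacontractions} to the already-established Corollary \ref{cor:contractionSDdilation} together with the approximate compactness result Proposition \ref{prop:approxcompactdilation}, in the same spirit as the proof of Corollary \ref{cor:contractionSDdilation} itself. First I would observe that if $T \in \cK(H)^d$ is a tuple of compact contractions, then by (\ref{eq:IcantbelieveImissedthisonthefirsttimearound}) every tuple in $\cW(T)$ consists of matrix contractions, so $\cW(T) \subseteq \prod_{j=1}^d \Wmin{}(\overline{\bD})$. Applying Theorem \ref{thm:symmetricSDconsequences} with the SD-tuple $(1+\varepsilon)(a_1,\ldots,a_d)$ and the symmetric sets $K_i = \overline{\bD}$ gives $\cW(T) \subseteq \Wmin{}\left( \prod_{i=1}^d (1+\varepsilon) a_i \overline{\bD} \right)$.

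Now set $K := \prod_{i=1}^d (1+\varepsilon) a_i \overline{\bD}$. Since $T$ is a tuple of compact operators with $\cW(T) \subseteq \Wmin{}(K)$, and (if $H$ is infinite-dimensional) Proposition \ref{prop:approxcompactdilation} applies, we obtain for a suitably small $\varepsilon' > 0$ a compact normal dilation $N = A \oplus B$ of $T$ with $\|A_i\| < \varepsilon'$ and $\sigma(B) \subseteq (1+\varepsilon')K$; in particular $\|N_i\| = \max(\|A_i\|, \|B_i\|)$, and $\|B_i\| \leq (1+\varepsilon')(1+\varepsilon) a_i$, while $\|A_i\| < \varepsilon'$. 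Choosing $\varepsilon'$ small enough that both $(1+\varepsilon')(1+\varepsilon) a_i \leq a_i$ — wait, this fails; the point is rather that one should run the argument with a slightly \emph{smaller} target. Concretely, fix $\varepsilon'' > 0$ with $(1+\varepsilon'')(a_1,\ldots,a_d)$ still an SD-tuple (possible by shrinking $\varepsilon$), apply the above to get a dilation with norms at most $(1+\delta)(1+\varepsilon'')a_i$ for a $\delta$ of our choosing, and take $\delta$ small enough that $(1+\delta)(1+\varepsilon'') \leq 1+\varepsilon$, so $\|N_i\| \leq (1+\varepsilon)a_i$. If one instead wants the sharp bound $\|N_i\| \leq a_i$ exactly, replace $a_i$ by $a_i/(1+\eta)$ throughout for small $\eta$, using that $(1+\varepsilon)(a_1,\ldots,a_d)$ being an SD-tuple forces $(1+\varepsilon')(a_1/(1+\eta),\ldots,a_d/(1+\eta))$ to be an SD-tuple for appropriate $\varepsilon' > \varepsilon$, $\eta$ small.

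For the finite-dimensional case ($H$ finite-dimensional), compactness is automatic, and we may invoke Corollary \ref{cor:contractionSDdilation} directly: with the SD-tuple $(1+\varepsilon)(a_1,\ldots,a_d)$ we get a normal dilation acting on a finite-dimensional space — hence compact — with $\|N_i\| \leq (1+\varepsilon)a_i$, and then the same rescaling by $(1+\eta)$ yields the bound $a_i$. Assembling the two cases completes the proof.

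The main obstacle is bookkeeping the two nested approximation losses: Proposition \ref{prop:approxcompactdilation} introduces a factor $(1+\varepsilon')$ in the spectrum of the main summand $B$ and a small-norm junk summand $A$, while the SD-tuple hypothesis is only assumed with \emph{some} slack $(1+\varepsilon)$. The care required is in choosing the order of quantifiers — first absorb a small amount of the SD slack into the $(1+\varepsilon')$ coming from Proposition \ref{prop:approxcompactdilation}, noting that the small-norm summand $A$ with $\|A_i\| < \varepsilon'$ is harmlessly below $a_i$ once $\varepsilon' < \min_i a_i$ — so that the final normal dilation, which is a direct sum of compact operators and hence compact, satisfies $\|N_i\| \leq a_i$ for every $i$.
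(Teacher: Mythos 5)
Your overall strategy---feeding $\cW(T) \subseteq \prod_{j=1}^d \Wmin{}(\overline{\bD})$ from (\ref{eq:IcantbelieveImissedthisonthefirsttimearound}) into Theorem \ref{thm:symmetricSDconsequences} and then applying Proposition \ref{prop:approxcompactdilation} (with Corollary \ref{cor:contractionSDdilation} handling the finite-dimensional case)---is exactly the intended route; the paper states this corollary without proof as the SD-analogue of Corollary \ref{cor:approxcompcontractions}. But there is a genuine gap at the step you yourself flag with ``wait, this fails,'' and the repair you propose does not close it. Reading the hypothesis literally, the only containment available is $\cW(T) \subseteq \Wmin{}\bigl(\prod_i (1+\varepsilon)a_i\,\overline{\bD}\bigr)$, and Proposition \ref{prop:approxcompactdilation} can only \emph{inflate} the target set, so the argument lands at $\|N_i\| \leq (1+\delta)(1+\varepsilon)a_i$ and never reaches $a_i$. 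The final rescaling is circular: to run the argument for $a_i/(1+\eta)$ you need $(1+\varepsilon')\cdot a/(1+\eta)$ to be an SD-tuple, and since the set of SD-tuples is upward closed this follows from the given hypothesis only when $(1+\varepsilon')/(1+\eta) \geq 1+\varepsilon > 1$; but then the bound you obtain, $(1+\varepsilon')a_i/(1+\eta) \geq (1+\varepsilon)a_i$, is still strictly larger than $a_i$. No choice of $\varepsilon'$ and $\eta$ satisfies both constraints.

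The underlying issue is that the factor $(1+\varepsilon)$ in the stated hypothesis points the wrong way, and as literally written the statement is false: for $d=1$ a positive scalar $(a_1)$ is an SD-tuple precisely when $a_1 \geq 1$, so \emph{every} $a_1>0$ satisfies ``$(1+\varepsilon)a_1$ is an SD-tuple for some $\varepsilon>0$,'' yet a compact contraction of norm one admits no dilation of norm below one. The hypothesis that makes the corollary true, and that parallels the strict inequality $\sum a_i^{-2}<1$ of Corollary \ref{cor:approxcompcontractions}, is that $\frac{1}{1+\varepsilon}(a_1,\ldots,a_d)$ is an SD-tuple. With that hypothesis your argument closes with no delicate bookkeeping: Theorem \ref{thm:symmetricSDconsequences} gives $\cW(T)\subseteq \Wmin{}\bigl(\prod_i \frac{a_i}{1+\varepsilon}\,\overline{\bD}\bigr)$, and Proposition \ref{prop:approxcompactdilation} applied with $\varepsilon' \leq \varepsilon$ and $\varepsilon' < \min_i a_i$ yields a compact normal dilation $N = A\oplus B$ with $\|A_i\| < \varepsilon' < a_i$ and $\|B_i\| \leq \frac{1+\varepsilon'}{1+\varepsilon}a_i \leq a_i$, hence $\|N_i\|\leq a_i$. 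You should either prove the statement with the corrected hypothesis or record explicitly that the literal statement fails.
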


It would be interesting to know if compact-to-compact dilation could be achieved without the approximation of spectrum used in Proposition \ref{prop:approxcompactdilation}. This would be useful even in the particular case of Corollary \ref{cor:approxcompcontractions}, as compact-to-compact dilation without perturbation of bounds would demonstrate that the use of Halmos dilation
\be\label{eq:halmosdilationdefinition} \ba
X \textrm{ is a (self-adjoint)} &\textrm{ contraction } \implies \\
				&U := \begin{pmatrix} X & \sqrt{I - XX^*} \\ \sqrt{I - X^*X} & -X^* \end{pmatrix} \text{ is a (self-adjoint) unitary dilation }
\ea \ee
in \cite[Theorem 6.7]{PSS} is not optimal, as it immediately removes compactness. On the other hand, if $\mathcal{W}(T) \subseteq \Wmin{}(K)$, then for the diagonal operator tuple $N$ with eigenvalues at all points of $K$, there is a UCP map sending $N_i$ to $T_i$. If a compact normal dilation of $T$ with joint spectrum in $K$ exists, then we may find such a UCP map which also maps $*$-polynomials in $N_1, \ldots, N_d$ to compact operators. Because the proof of Arveson's extension theorem relies on limits in a pointwise topology, not the norm topology, this may be too much to ask.

Below we consider a different sense of matrix approximation. Equip the matrix universe $\Md = \bigcup\limits_{n=1}^\infty M_n^d$ with a norm $||\cdot||$ that is decreasing under UCP maps. More precisely, equip each matrix level with a norm $||\cdot||_n$, such that if $\phi: M_n^d \to M_m^d$ is a UCP map and $A \in M_n^d$, then $||\phi(A)||_m \leq ||A||_n$. For example, we may take $||(A_1, \ldots, A_d)||$ to be the sum of the operator norms of each $A_i$. With the norm $||\cdot||$ fixed, we may consider the Hausdorff topology on subsets $\mathcal{S} = \bigcup_{n=1}^\infty \mathcal{S}_n$, which we abbreviate as follows.
\begin{definition}
We write $\mathcal{S} \approx_\varepsilon \mathcal{T}$ if for every $S \in \mathcal{S}$ there exists $T  \in \mathcal{T}$ with $||S - T|| < \varepsilon$, and for every $T^\prime \in \mathcal{T}$, there exists $S^\prime \in \mathcal{S}$ with $||T^\prime - S^\prime|| < \varepsilon$. We apply the same notation for subsets of a fixed matrix level $M_n^d$, so that $\mathcal{S} \approx_{\varepsilon} \mathcal{T}$ if and only if $\mathcal{S}_n \approx_\varepsilon \mathcal{T}_n$ for each $n$.
\end{definition}

\begin{proposition}\label{prop:approxWmin}
Let $A \in M_n^d$ have $\Wmin{}(K) \approx_\varepsilon \mathcal{W}(A)$. Then there is a polyhedron $L \subseteq K$ with at most $2n^3(d+1) + 1$ vertices such that $K \approx_{2\varepsilon} L$.
\end{proposition}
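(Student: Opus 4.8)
The plan is to exploit the fact that $A$ is a matrix tuple, so that $\mathcal{W}(A)$ is the matrix range of a finite-dimensional tuple, and combine this with the dimension-bounded Carathéodory-type dilation of Proposition~\ref{prop:introcara}. First I would observe that $\Wmin{}(K) \approx_\varepsilon \mathcal{W}(A)$ at the first matrix level already forces $K = \Wmin{1}(K) \approx_\varepsilon \mathcal{W}_1(A)$, and more importantly it forces $\mathcal{W}(A) \subseteq \Wmin{}(K + \varepsilon \cdot(\text{unit ball}))$-type control; but the cleaner route is to produce the polyhedron $L$ directly from a dilation of $A$. The point is that $\mathcal{W}(A) \approx_\varepsilon \Wmin{}(K)$ implies in particular that $\mathcal{W}(A)$ is contained in a small neighborhood of $\Wmin{}(K)$, but for the containment $L \subseteq K$ I want $L$ built from genuine spectral points.

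The key steps, in order: (1) From $\Wmin{}(K)\approx_\varepsilon \mathcal{W}(A)$ deduce that every element of $\mathcal{W}(A)$ is within $\varepsilon$ of $\Wmin{}(K)$; in particular, a UCP map witnessing membership in $\mathcal{W}(A)$ composed with the inclusion data shows $\mathcal{W}_1(A)$ lies in the $\varepsilon$-neighborhood of $K$, hence (after the standard reduction) $\mathcal{W}(A)\subseteq \Wmin{}(K_\varepsilon)$ where $K_\varepsilon$ is the closed $\varepsilon$-neighborhood of $K$. Actually the sharper and more useful consequence: since $A$ is a tuple of $n\times n$ matrices and $\mathcal{W}(A) \approx_\varepsilon \Wmin{}(K)$, I apply Proposition~\ref{prop:introcara} to get that $A$ admits a normal dilation $N$ on a space of dimension at most $2n^3(d+1)+1$ with $\sigma(N)$ contained in (the closure of the extreme points of) a set just barely larger than $K$. (2) Let $L := \operatorname{conv}(\sigma(N))$. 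Since $N$ acts on a space of dimension $\le 2n^3(d+1)+1$, its joint spectrum has at most $2n^3(d+1)+1$ points, so $L$ is a polyhedron with at most that many vertices. (3) By Proposition~\ref{prop:minWrange}, $\mathcal{W}(N) = \Wmin{}(L)$, and since $N$ dilates $A$ we get $\mathcal{W}(A) \subseteq \Wmin{}(L)$. (4) Now run the approximation bookkeeping: on one hand $\Wmin{}(L) = \mathcal{W}(N) \supseteq \mathcal{W}(A) \approx_\varepsilon \Wmin{}(K)$, giving $L = \Wmin{1}(L) \supseteq$ something $\varepsilon$-close to $K$, so $K$ is within $\varepsilon$ of $L$; on the other hand I need $L \subseteq K$ or at least $L \subseteq K_{2\varepsilon}$-style control, which forces $L$ within $2\varepsilon$ of $K$, i.e.\ $K \approx_{2\varepsilon} L$. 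The factor $2$ should come from combining the $\varepsilon$ of "$\mathcal{W}(A)$ close to $\Wmin{}(K)$'' with the $\varepsilon$ of "$\sigma(N)$ chosen in an $\varepsilon$-enlargement'' in step (1), or alternatively from a triangle-inequality through an intermediate set.

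The main obstacle I anticipate is pinning down the direction $L \subseteq K$ — equivalently, controlling $\sigma(N)$ from above by $K$. Proposition~\ref{prop:introcara} as stated gives a normal dilation with spectrum in $\overline{\operatorname{ext}(K)}$ only under the hypothesis $\mathcal{W}(T) \subseteq \Wmin{}(K)$, which we do \emph{not} have exactly — we only have $\mathcal{W}(A)\subseteq \Wmin{}(K')$ for $K'$ slightly larger than $K$. So honestly $L$ will be a polyhedron inside $K'$, not inside $K$, and I will need to absorb the discrepancy into the statement's $2\varepsilon$ tolerance rather than getting $L\subseteq K$ literally. Here I would argue: (a) choose $K'$ to be the closed $\varepsilon$-neighborhood of $K$ so that $\mathcal{W}(A)\subseteq\Wmin{}(K')$ holds and $K' \approx_\varepsilon K$; (b) get $L\subseteq K'$ a polyhedron with the dimension bound and $L \approx_\varepsilon K'$ from the Hausdorff-closeness of the associated $\Wmin{}$ sets at level $1$; (c) conclude $L\approx_{2\varepsilon} K$ by the triangle inequality $K\approx_\varepsilon K'\approx_\varepsilon L$. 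The only subtlety is verifying $L\subseteq K$ rather than merely $L\subseteq K'$ if the paper truly wants the literal containment; if so, I would instead take $L$ to be the convex hull of a maximal $\varepsilon$-net of extreme points of $K$ and separately check, using $\mathcal{W}(A)\approx_\varepsilon\Wmin{}(K)$ together with the dilation's dimension bound, that no more than $2n^3(d+1)+1$ net points are needed — but I expect the cleaner writeup keeps $L$ as the spectral hull of the dilation and accepts the $2\varepsilon$ slack exactly as the statement permits.
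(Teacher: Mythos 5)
You have correctly identified the crux --- Proposition \ref{prop:introcara} requires the exact containment $\mathcal{W}(\cdot) \subseteq \Wmin{}(K)$, which $A$ need not satisfy --- but neither of your proposed resolutions closes the gap. Working with the $\varepsilon$-neighborhood $K_\varepsilon$ fails on two counts. First, the step ``every element of $\mathcal{W}(A)$ is within $\varepsilon$ of $\Wmin{}(K)$, hence $\mathcal{W}(A)\subseteq\Wmin{}(K_\varepsilon)$'' is itself unjustified: a small-norm perturbation of a tuple admitting a normal dilation with spectrum in $K$ need not admit one with spectrum only $\varepsilon$ farther out (splicing a dilation of the nearby point with a dilation of the perturbation via the $Q_1,Q_2$ device of Proposition \ref{prop:approxcompactdilation} costs an enlargement of order $\sqrt{\varepsilon}$ after optimizing the parameters $a,b$, not $\varepsilon$). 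Second, even granting it, you end with $L\subseteq K_\varepsilon$, whereas the statement asserts $L\subseteq K$ literally. Your fallback --- the convex hull of an $\varepsilon$-net of $\text{ext}(K)$ --- cannot deliver the vertex bound, since the cardinality of such a net depends on the geometry of $K$ and on $\varepsilon$, not on $n$ and $d$; the bound $2n^3(d+1)+1$ can only come from dilating an $n\times n$ tuple that genuinely sits inside $\Wmin{}(K)$.

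The missing idea is to perturb $A$ rather than $K$. Apply the hypothesis $\Wmin{}(K)\approx_\varepsilon\mathcal{W}(A)$ to the single point $A\in\mathcal{W}_n(A)$ to obtain a tuple $B\in\Wmin{n}(K)$ with $\|A-B\|<\varepsilon$. Since the norm is decreasing under UCP maps, $\|\phi(A)-\phi(B)\|<\varepsilon$ for every UCP $\phi$, so combining with the other direction of the hypothesis gives $\mathcal{W}(B)\approx_{2\varepsilon}\Wmin{}(K)$ --- this is where the factor $2$ actually arises. Now $\mathcal{W}(B)\subseteq\Wmin{}(K)$ holds exactly, so Proposition \ref{prop:introcara} yields a normal dilation $N$ of $B$ acting in dimension at most $2n^3(d+1)+1$ with $\sigma(N)\subseteq K$. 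Then $L:=\mathcal{W}_1(N)=\operatorname{conv}(\sigma(N))$ is a polyhedron with the stated vertex bound satisfying $L\subseteq K$ genuinely, and the sandwich $\mathcal{W}(B)\subseteq\mathcal{W}(N)\subseteq\Wmin{}(K)$ upgrades $\mathcal{W}(B)\approx_{2\varepsilon}\Wmin{}(K)$ to $\mathcal{W}(N)\approx_{2\varepsilon}\Wmin{}(K)$, whose first level reads $L\approx_{2\varepsilon}K$.
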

\begin{proof}
By definition, we have that for any $m \in \bZ^+$ and any tuple $T \in \Wmin{m}(K)$, there exists a UCP map $\phi: M_n^d \to M_m^d$ such that $||\phi(A) - T|| < \varepsilon$. Similarly, since $A \in \mathcal{W}_n(A)$, there exists an $n \times n$ matrix tuple $B \in \Wmin{n}(K)$ such that $||A - B|| < \varepsilon$, which implies that $||\phi(B) - T|| < 2 \varepsilon$. That is, any tuple in $\Wmin{}(K)$ may be approximated within $2 \varepsilon$ by a tuple in $\mathcal{W}(B)$. Since we also have that $\mathcal{W}(B) \subseteq \Wmin{}(K)$, it holds that $\mathcal{W}(B) \approx_{2\varepsilon} \Wmin{}(K)$.

Let $N$ be a normal dilation of $B$ with joint spectrum in $K$, where we may suppose the members of $N$ are matrices of dimension at most $2n^3(d+1) + 1$ by Proposition \ref{prop:introcara}. Then since $\mathcal{W}(B) \approx_{2\varepsilon} \Wmin{}(K)$ and $\mathcal{W}(B) \subseteq \mathcal{W}(N) \subseteq \Wmin{}(K)$, it holds that $\mathcal{W}(N) \approx_{2\varepsilon} \Wmin{}(K)$. Restricting to the first level yields $\mathcal{W}_1(N) \subseteq K$ and $\mathcal{W}_1(N) \approx_{2\varepsilon} K$. By normality of $N$, $L := \mathcal{W}_1(N)$ is a polyhedron with at most $2n^3(d+1) + 1$ vertices.
\end{proof}

When approximation is replaced by equality, the vertex count $2n^3(d+1) + 1$ may be replaced by the more pleasant $n$. This may be deduced from \cite{arvesonmatrix}, but we will present a proof which arises in pursuit of the following problem: if $T \in \cK(H)^d$ is a tuple of compact operators with $\mathcal{W}(T) = \Wmin{}(K)$, to what extent is the shape of $K$ restricted by the fact that $T$ is compact? In addition, if $K$ is fixed, to what extent is $T$ determined by $K$? Such results appeared to be within the scope of \cite[\S 6]{DDSS}, but one of our contributions here is the presentation of counterexamples to the claims therein. In response, the authors uploaded an arxiv correction \cite{DDSScorrected} which addresses these examples with additional assumptions. In both versions, the arguments center around a minimality condition for operator tuples, as in \cite[Definition 6.1]{DDSS}. We repeat that definition here.

\begin{definition}\label{def:minimalitydefW}
A tuple $T = (T_1, \ldots, T_d) \in \cB(H)^d$ is said to be \textit{minimal}, or \textit{minimal for its matrix range}, if the restriction of $T$ to any proper reducing subspace has strictly smaller matrix range.
\end{definition}

If one intends to determine $T$ uniquely from its matrx range $\mathcal{W}(T)$, the presence of some minimality condition (though not necessarily the one above) is natural. We show that even in the compact case, this particular condition is not sufficient to determine $T$ from $\mathcal{W}(T)$ up to unitary equivalence. Similarly, given a compact tuple $T$ which is not minimal, there might not be a summand which is minimal for the same matrix range. Finally, a minimal compact tuple need not have the property that the unital $C^*$-algebra it generates is isomorphic to the $C^*$-envelope of the operator system it generates. All three results were claimed positively in \cite{DDSS}, extending the uniqueness results of \cite{arvesonmatrix, relaxation, zalar} for matrix ranges or free spectrahedra to the compact setting. Therefore, our counterexamples show that Definition \ref{def:minimalitydefW} is insufficient for consideration of compact tuples acting on infinite-dimensional spaces. The reader is invited to read \cite[\S 6]{DDSScorrected} and see how the definition of \lq\lq nonsingularity\rq\rq\hspace{0pt} therein covers the non-pathological cases presented in this section (which motivated said definition).

Recall that for a matrix convex set $\mathcal{S}$, there are multiple relevant notions of \textit{extreme point}. Given a matrix convex combination
\be\label{eq:matrixconvexcombo}
X = \sum_{i=1}^m V_i^* Y^i V_i, \hspace{.7 in} Y^i \in \mathcal{S}_{n_i}, \hspace{.2 in} V_i: \bC^{n} \to \bC^{n_i}, \hspace{.2 in} \sum_{i=1}^m V_i^*V_i = 1,
\ee
one calls the point $X \in \mathcal{S}_n$
\begin{itemize}
\item an \textit{absolute extreme point} of $\mathcal{S}$ if whenever each $V_i: \bC^n \to \bC^{n_i}$ is nonzero, it follows that each $Y^i$ contains a summand (possibly equal to $Y^i$) which is unitarily equivalent to $X$.
\item a \textit{matrix extreme point} of $\mathcal{S}$ if whenever each $V_i: \bC^n \to \bC^{n_i}$ is surjective, it follows that each $Y^i$ is unitarily equivalent to $X$.
\item a \textit{Euclidean extreme point} of $\mathcal{S}$ if $X$ is an extreme point of the convex set $\mathcal{S}_n$ in the usual sense. 
\end{itemize}
These definitions may be characterized in dilation-theoretic terms, as in \cite[Theorem 1.1]{evertheltonext}. Note that at the scalar level $n = 1$, there is no difference between matrix extreme points and Euclidean extreme points, as surjectivity of the $V_i$ forces $n_i = 1$, in which case (\ref{eq:matrixconvexcombo}) reduces to a traditional convex combination. 

We will need the following lemma, which concerns the application of vector states to normal tuples.

\begin{lemma}\label{lem:AEPmin}
Let $K \subset \bC^d$ be a compact convex set and suppose $\lambda$ is an extreme point of $K$. If $N \in \cB(\cH)^d$ is a normal tuple with $\sigma(N) \subseteq K$, and some $v \in \cH$ has $||v|| = 1$ and $\langle N_j v, v \rangle = \lambda_j$ for each $j$, then $N_j v = \lambda_j v$ for each $j$.
\end{lemma}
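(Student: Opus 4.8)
The plan is to exploit the fact that $\lambda$ is an extreme point of $K$ together with the spectral theorem for the commuting normal tuple $N$. First I would invoke the joint spectral measure $E$ for $N = (N_1, \ldots, N_d)$ on the compact set $\sigma(N) \subseteq K$, so that $N_j = \int_{\sigma(N)} z_j \, dE(z)$ for each $j$. Fixing the unit vector $v$, let $\mu$ be the scalar probability measure $\mu(\omega) = \langle E(\omega) v, v\rangle$, supported on $\sigma(N) \subseteq K$. The hypothesis $\langle N_j v, v\rangle = \lambda_j$ says exactly that $\int z_j \, d\mu(z) = \lambda_j$ for each $j$, i.e. the barycenter (in $\bC^d \cong \bR^{2d}$) of the probability measure $\mu$ is the point $\lambda$.

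The key step is then the standard convexity fact: if a probability measure supported on a compact convex set $K$ has its barycenter at an extreme point $\lambda$ of $K$, then the measure is the point mass $\delta_\lambda$. I would prove this by picking a real-linear functional $\ell$ on $\bR^{2d}$ and a constant so that the closed half-space $\{x : \ell(x) \le \ell(\lambda)\}$ contains $K$ and meets $K$ only in a face not containing $\lambda$ in its relative interior — more simply, since $\lambda$ is extreme, for any $\varepsilon>0$ the cap $K \setminus B_\varepsilon(\lambda)$ is contained in a half-space $\{\ell \le \ell(\lambda) - \delta\}$ for some $\delta = \delta(\varepsilon) > 0$; applying $\ell$ under the integral and using $\int \ell \, d\mu = \ell(\lambda)$ forces $\mu(K \setminus B_\varepsilon(\lambda)) = 0$ for every $\varepsilon$, hence $\mu = \delta_\lambda$. (Alternatively one can separate $\lambda$ from the convex hull of $K \setminus B_\varepsilon(\lambda)$ by Hahn–Banach, since $\lambda$ lies outside that closed convex set by extremality.)

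Having $\mu = \delta_\lambda$ means $\langle E(\omega) v, v\rangle = \delta_\lambda(\omega)$, so $\langle E(\{\lambda\}) v, v\rangle = 1 = \|v\|^2$, and since $E(\{\lambda\})$ is an orthogonal projection this gives $E(\{\lambda\}) v = v$. Then for each $j$,
\[
N_j v = \int_{\sigma(N)} z_j \, dE(z) v = \int_{\sigma(N)} z_j \, dE(z) E(\{\lambda\}) v = \lambda_j E(\{\lambda\}) v = \lambda_j v,
\]
as claimed. The main obstacle is purely the convexity lemma about barycenters at extreme points; everything else is a routine application of the multivariate spectral theorem, and the lemma itself is classical (a form of Bauer's maximum principle / Choquet theory in finite dimensions), so no real difficulty is expected — care is only needed to state the separation argument cleanly since $K$ need not be a polytope.
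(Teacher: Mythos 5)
Your proof is correct and follows essentially the same route as the paper: both pass from $v$ and the spectral decomposition of $N$ to a scalar probability measure on $K$ whose barycenter is $\lambda$, and then invoke the classical fact that a probability measure on a compact convex set with barycenter at an extreme point must be the Dirac mass (the paper phrases this as uniqueness of the representing measure at a Choquet boundary point of the affine function system $\operatorname{span}\{1,\pi_1,\dots,\pi_d\}$, while you use the joint spectral measure and Bauer's principle). The only cosmetic difference is the packaging of the spectral theorem (projection-valued measure versus the multiplication-operator model), and your final step $E(\{\lambda\})v=v \Rightarrow N_jv=\lambda_j v$ is clean.
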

\begin{proof}
The joint reducing subspace of $N$ generated by $v$ is separable, so we may assume $\cH$ is separable. 
Therefore $\cH$ is a finite or countable direct sum of $L^2(\mu_i)$ for regular Borel measures $\mu_i$ on $\sigma(N)$, which we may extend to measures on $K$ in the usual way. We may write $N_j = \oplus M_{\pi_j}$ for $\pi_j$ the $j$th coordinate function on $\sigma(N)$, and $v = (f_1, f_2, \ldots )$ for $f_i \in L^2(\mu_i)$ with $\sum \int |f_i|^2 \,d\mu_i = 1$. We need to prove that $\pi_j f_i = \lambda_j f_i \text{ a.e. } [\mu_i]$. Equivalently, if we let $d\nu_i = |f_i|^2 \,d\mu_i$ and $\nu(E) = \sum \nu_i (E)$, so $\nu$ is a probability measure, we need to prove that $\pi_j = \lambda_j \text{ a.e. } [\nu]$. 

By definition, it holds that for each $j \in \{1, \ldots, d\}$,
\bes
\int_K \pi_j \,d\nu = \sum \int_K \pi_j \,d\nu_i = \sum \int_K \pi_j \cdot |f_i^2| \,d\mu_i = \sum \langle M_{\pi_j} f_i, f_i \rangle = \langle N_j f, f \rangle = \lambda_j = \pi_j(\lambda).
\ees
Therefore, for the affine function system $S = \textrm{span}\{1, \pi_1, \ldots, \pi_d\} \subset C(K)$, $\nu$ is a representing measure for the point $\lambda$. Since $\lambda$ is an extreme point of $K$, it is a Choquet boundary point of $S$, and the representing measure is unique: $\nu = \delta_\lambda$. Combined with the above equality $\int \pi_j \,d\nu = \lambda_j$, this says that $\pi_j = \lambda_j \text{ a.e. } [\nu]$. It follows that $\pi_j f_i = \lambda_j f_i \text{ a.e. } [\mu_i]$ for each $i$ and $j$, and the proof is complete.
\end{proof}

In Lemma \ref{lem:AEPmin}, the tuple $N$ may act on an infinite-dimensional space. Even so, some manipulation of matrix ranges shows that Lemma \ref{lem:AEPmin} is actually equivalent to the claim
\be\label{eq:AEPEP}
\lambda \text{ is an extreme point of } K \implies \lambda \text{ is an absolute extreme point of } \Wmin{}(K).
\ee
We leave the details of the equivalence between Lemma \ref{lem:AEPmin} and (\ref{eq:AEPEP}) to the reader, and we remark that (\ref{eq:AEPEP}) is certainly well-known. In particular, any minimal set $\Wmin{}(K)$ is spanned by its absolute extreme points. See also the general result \cite[Corollary 6.12]{Kriel}, which shows that if  $\mathcal{S}$ is a matrix convex set spanned by its matrix extreme points from a fixed level $\mathcal{S}_n$, then $\mathcal{S}$ is also spanned by its absolute extreme points. That result pairs quite nicely with \cite[Corollary 1.1]{evertabs}, which concerns precisely the opposite scenario -- there exists a family of matrix convex sets which have no absolute extreme points at all. The construction of this family relies heavily on the use of compact operators acting on infinite-dimensional spaces.

We will use Lemma \ref{lem:AEPmin} in tandem with the following facts, which may be demonstrated by matrix computations. 

\begin{itemize}
\item If $T$ is a $d$-tuple of matrices and $\lambda$ is an extreme point of $\mathcal{W}_1(T)$, then there is a unit vector $v$ with $\langle T_i v, v \rangle = \lambda_i$.

\item If $T$ is a $d$-tuple of compact operators and $\lambda$ is a {\blue \textbf{nonzero}} extreme point of $\mathcal{W}_1(T)$, then there is a unit vector $v$ with $\langle T_i v, v \rangle = \lambda_i$.
\end{itemize}
In particular, we note that detection of the point $0 \in W_1(T)$ for $T$ a compact tuple on an infinite-dimensional space might not be achieved using a vector state. For example, consider the single diagonal operator $S = \bigoplus\limits_{n \in \bZ^+} \frac{1}{n}$.

If $T \in \cK(H_1)^d$ has $\mathcal{W}(T) = \Wmin{}(K)$, then $T$ dilates to a normal tuple $N \in \cB(H_2)^d$ with the same matrix range by Proposition \ref{prop:introcara}, though compactness of $T$ might be lost in the dilation. If $v$ is a unit vector in $H_1$, then by definition, the corresponding vector state gives the same result whether it is applied to $T$ or to $N$. Further, if $v$ is a joint eigenvector for $N$, then because $v$ belongs to the smaller Hilbert space $\cH_1$, it is also a joint eigenvector for $T$. We may use these facts to characterize when $\mathcal{W}(T) = \Wmin{}(K)$ for $T$ a compact or matrix tuple. While we find that the shape of $K$ is what one would expect if $T$ were actually normal, $T$ itself might be minimal and non-normal.

\begin{theorem}\label{thm:matrix_eigenvector_hunt}
Let $T \in M_n^d$ be a tuple of  matrices with $\mathcal{W}(T) = \Wmin{}(K)$. Then every extreme point of $K$ is a joint eigenvalue of $T$, and in particular $K$ must be a polyhedron with at most $n$ vertices.
\end{theorem}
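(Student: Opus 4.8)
The plan is to exploit the two bulleted facts stated just before the theorem together with Lemma~\ref{lem:AEPmin}. Let $\lambda$ be an extreme point of $K$. First I would observe that since $\mathcal{W}_1(T)=\Wmin{1}(K)=K$, the point $\lambda$ is an extreme point of the compact convex set $\mathcal{W}_1(T)\subseteq\bC^d$. By the first bullet (a matrix tuple attains every extreme point of its numerical range via a vector state), there is a unit vector $v\in\bC^n$ with $\langle T_i v,v\rangle=\lambda_i$ for each $i$. Next I would invoke Proposition~\ref{prop:introcara} (or the definition of $\Wmin{}$): since $\mathcal{W}(T)=\Wmin{}(K)$, the tuple $T$ has a normal dilation $N\in\cB(\cH_2)^d$ with $\sigma(N)\subseteq K$, where $\cH_n=\bC^n\subseteq\cH_2$ via the dilation isometry. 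Applying the vector state of $v$ to $N$ gives the same value $\langle N_i v,v\rangle=\lambda_i$ (since $T_i=V^*N_iV$ and $v\in\operatorname{Ran}V$). Now Lemma~\ref{lem:AEPmin} applies: $\lambda$ is an extreme point of $K$ and $N$ is normal with $\sigma(N)\subseteq K$, so $N_i v=\lambda_i v$ for each $i$. But $v\in\cH_n$, hence $T_i v=V^*N_i V v=V^*\lambda_i v=\lambda_i v$, so $\lambda$ is a joint eigenvalue of $T$.

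Having shown every extreme point of $K$ is a joint eigenvalue of $T\in M_n^d$, the finiteness is the second, purely linear-algebraic, step: distinct joint eigenvalues of a commuting-or-not tuple need not have orthogonal eigenvectors in general, but here the key point is that $T$ acts on an $n$-dimensional space. I would argue that the joint eigenvectors corresponding to distinct extreme points $\lambda^{(1)},\dots,\lambda^{(k)}$ of $K$ are linearly independent: a standard Vandermonde-type argument works, since if $\sum c_j v_j=0$ is a minimal dependence and we apply $T_i-\lambda^{(1)}_i$ (picking a coordinate $i$ where $\lambda^{(1)}_i\ne\lambda^{(j)}_i$ for some $j$ in the relation) we shorten the relation, contradiction. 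Hence $k\le n$, so $K$ has at most $n$ extreme points. A compact convex set in $\bC^d$ with finitely many extreme points is the convex hull of those points, i.e.\ a polyhedron with at most $n$ vertices. This gives both conclusions.

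The main obstacle, and the place requiring care, is the passage from the vector state on $T$ to a joint eigenvector equation: one must be precise that the dilation isometry $V:\bC^n\to\cH_2$ satisfies $v\in\operatorname{Ran}V$ so that $V^*N_iVv=T_iv$ and simultaneously $N_iv=\lambda_iv$ forces $T_iv=\lambda_iv$ (rather than merely $V^*N_iv=\lambda_iv$, which a priori is weaker). This works because $V$ is an isometry, so $V^*$ restricted to $\operatorname{Ran}V$ is the inverse isometry; identifying $\bC^n$ with $\operatorname{Ran}V$ makes $v=Vv$ and the computation is clean. The rest — the first bullet fact and the linear independence of joint eigenvectors — is routine and was already flagged in the excerpt as ``demonstrated by matrix computations.'' I would also remark that the same argument, replacing the first bullet by the second (nonzero extreme points of the numerical range of a compact tuple are attained by vector states), is exactly what will be needed for the compact analogue, so it is worth isolating cleanly here.
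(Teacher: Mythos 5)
Your proposal is correct and follows essentially the same route as the paper: detect the extreme point $\lambda$ by a vector state (using that $T$ is a matrix tuple), pass to a normal dilation with spectrum in $K$, apply Lemma~\ref{lem:AEPmin} to upgrade the vector state to a joint eigenvector equation for $N$, and compress back to conclude $T_i v = \lambda_i v$. The only difference is that you spell out the linear-independence argument behind the bound of $n$ extreme points, which the paper leaves implicit; that step is correct as you state it.
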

\begin{proof}
Since $K = \mathcal{W}_1(T)$ and $T$ is finite-dimensional, if $\lambda$ is an extreme point of $K$, there is a vector state $v$ such that $\langle T_i v, v \rangle = \lambda_i$. Writing a normal dilation $N$ of $T$, $\textrm{conv}(\sigma(N)) = K$, we have that similarly $\langle N_i v, v \rangle = \lambda_i$. By Lemma \ref{lem:AEPmin}, since $\lambda$ is extreme and detected by the vector state $v$, we conclude that $N_i v = \lambda_i v$. Now, as $v$ belongs to the Hilbert space on which $T$ acts, we also have $T_i v = \lambda_i v$, i.e. $v$ is a joint eigenvector for $T$. Finally, as $T$ consists of $n \times n$ matrices, there are only up to $n$ possible extreme points of $K$.
\end{proof}

Theorem \ref{thm:matrix_eigenvector_hunt} and its proof are listed for completeness, as whenever $T$ is a matrix tuple, we may assume $T$ is of the smallest dimension possible and apply any one of the various uniqueness results for free spectrahedra or matrix ranges of matrices. See, for example, \cite[\S 1]{arvesonmatrix}, \cite[Theorem 3.12 and Proposition 3.17]{relaxation}, \cite[Theorem 1.2]{zalar}, and \cite[Definition 6.3 and Theorem 6.9]{DDSScorrected}. We now consider a version for compact operator tuples.

\begin{theorem}\label{thm:compact_eigenvector_hunt}
Let $H$ be a Hilbert space (of any finite or infinite dimension). If there exists a tuple $T \in \cK(\cH)^d$ of compact operators with $\mathcal{W}(T) = \Wmin{}(K)$, then every {\blue \textbf{nonzero}} extreme point of $K$ is a joint eigenvalue for $T$, and $\text{ext}(K)$ is either a finite set or a sequence tending to zero.
\end{theorem}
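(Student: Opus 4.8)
The plan is to first establish, exactly as in the matrix case (Theorem \ref{thm:matrix_eigenvector_hunt}), that every nonzero extreme point of $K$ is a joint eigenvalue of $T$, and then to deduce the structural statement about $\text{ext}(K)$ from the Riesz theory of compact operators.

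For the eigenvalue claim: since $\mathcal{W}(T) = \Wmin{}(K)$, restricting to the first level gives $\mathcal{W}_1(T) = K$. Let $\lambda$ be a nonzero extreme point of $K$. Because $T$ is a tuple of compact operators and $\lambda$ is a \emph{nonzero} extreme point of $\mathcal{W}_1(T)$, the second bulleted fact preceding the theorem provides a unit vector $v \in H$ with $\langle T_i v, v\rangle = \lambda_i$ for all $i$. Applying Proposition \ref{prop:introcara} to $\mathcal{W}(T) \subseteq \Wmin{}(K)$, we obtain a normal dilation $N$ of $T$, say $T_i = V^* N_i V$ for an isometry $V \colon H \to H'$, with $\sigma(N) \subseteq \overline{\text{ext}(K)} \subseteq K$. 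Then $Vv$ is a unit vector with $\langle N_i(Vv), Vv\rangle = \langle T_i v, v\rangle = \lambda_i$, so Lemma \ref{lem:AEPmin} forces $N_i(Vv) = \lambda_i(Vv)$ for every $i$. Pulling back along $V$, $T_i v = V^* N_i V v = \lambda_i V^* V v = \lambda_i v$, so $v$ is a joint eigenvector of $T$ and $\lambda$ is a joint eigenvalue.

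For the structural claim, suppose $\text{ext}(K)$ is infinite; I must show that for each $\varepsilon > 0$ only finitely many extreme points have norm $\geq \varepsilon$, whence $\text{ext}(K)$ can be listed as a sequence tending to $0$ (as $K$ is closed). First, an elementary observation: if $v^{(1)}, v^{(2)}, \ldots$ are joint eigenvectors of $T$ for \emph{distinct} joint eigenvalues $\lambda^{(1)}, \lambda^{(2)}, \ldots$, then the $v^{(k)}$ are linearly independent. Indeed, given a minimal nontrivial relation $\sum_{k=1}^m c_k v^{(k)} = 0$ with $m \geq 2$, choose $j$ with $\lambda^{(1)}_j \neq \lambda^{(m)}_j$ and apply $T_j - \lambda^{(m)}_j I$ to produce a shorter nontrivial relation among $v^{(1)}, \ldots, v^{(m-1)}$, a contradiction. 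Now suppose for contradiction that there are infinitely many distinct extreme points $\lambda^{(k)}$ of $K$ with $\|\lambda^{(k)}\| \geq \varepsilon$; by the first part each has a unit joint eigenvector $v^{(k)}$, and these are linearly independent. Since $\max_i |\lambda^{(k)}_i| \geq \varepsilon/\sqrt{d}$, a pigeonhole argument over the $d$ coordinates lets us pass to a subsequence along which a fixed index $j$ satisfies $|\lambda^{(k)}_j| \geq \varepsilon/\sqrt{d}$ for all $k$. But $T_j$ is compact, hence has only finitely many eigenvalues of modulus $\geq \varepsilon/\sqrt{d}$, each with a finite-dimensional eigenspace; grouping the $v^{(k)}$ by the value of $\lambda^{(k)}_j$ exhibits each group as a linearly independent subset of a fixed finite-dimensional eigenspace of $T_j$, so there are only finitely many $v^{(k)}$ in all — a contradiction. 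When $\text{ext}(K)$ is finite there is nothing more to prove.

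I expect the eigenvalue claim to be essentially routine given Lemma \ref{lem:AEPmin} and Proposition \ref{prop:introcara}; the only point requiring care there is that $0$ need not be detected by a vector state (illustrated by $S = \bigoplus_n \tfrac1n$), which is precisely why the statement is restricted to \emph{nonzero} extreme points. The main work lies in the structural claim, and the principal obstacle is that $T$ is neither normal nor commuting, so no joint spectral theorem is available. The remedy is the elementary linear-independence observation for joint eigenvectors together with the compactness of each coordinate operator $T_j$ (finitely many eigenvalues of a given minimum modulus, each with finite-dimensional eigenspace), after a pigeonhole reduction to a single coordinate.
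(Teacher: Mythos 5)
Your proposal is correct and follows essentially the same route as the paper: nonzero extreme points of $\mathcal{W}_1(T)$ are detected by vector states because $T$ is compact, and then the normal dilation from Proposition \ref{prop:introcara} together with Lemma \ref{lem:AEPmin} and compression back to $H$ yields the joint eigenvector. The only difference is that the paper simply asserts that a set of nonzero joint eigenvalues of a compact tuple must be finite or a null sequence, whereas you supply the (correct) justification via linear independence of joint eigenvectors for distinct joint eigenvalues, a pigeonhole reduction to one coordinate, and the Riesz theory of the single compact operator $T_j$.
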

\begin{proof}
We begin by following the logic of the previous proof, noting that since $T$ is a compact tuple, we may detect {\blue \textbf{nonzero}} extreme points $\lambda$ of $K$ through vector states, $\lambda_i = \langle T_i v, v \rangle$. We similarly conclude that each such $\lambda$ is a joint eigenvector of $T$ using a normal dilation, Lemma \ref{lem:AEPmin}, and a restriction.
Since $\text{ext}(K) \setminus \{0\}$ is contained in the set of eigenvalues of a compact operator tuple, it follows that $\text{ext}(K)$ is either a finite set or a sequence tending to zero.
\end{proof}

Theorems \ref{thm:matrix_eigenvector_hunt} and \ref{thm:compact_eigenvector_hunt} show that if a matrix tuple, or a compact tuple acting on an infinite-dimensional space, has $\mathcal{W}(T) = \Wmin{}(K)$, then the shape of $K$ is \lq\lq precisely what one would expect\rq\rq\hspace{0pt} from examination of normal tuples with the same properties. In particular, the theorems exhibit a decomposition $T \cong N \oplus M$ where $N$ is a normal tuple. However, we note that the qualification of {\blue \textbf{nonzero}} extreme point in Theorem \ref{thm:compact_eigenvector_hunt} is problematic, in that if $K$ is a polyhedron with $0$ as a vertex, we might have that $\mathcal{W}(N)$ is a proper subset of $\Wmin{}(K)$. In particular, we might not be able to find a minimal normal summand for the same matrix range.

There are two distinct questions one can consider regarding minimality in this context, as in \cite{DDSS}. First, if a compact tuple $T$ has matrix range $\Wmin{}(K)$, and $T$ is minimal for its matrix range, is $T$ determined up to unitary equivalence? Second, if $T$ is not minimal, does it have a summand which is minimal for the same matrix range? Theorems \ref{thm:matrix_eigenvector_hunt} and \ref{thm:compact_eigenvector_hunt} answer both questions affirmatively in most cases, but there are pathological examples concerning the point $0 \in \mathcal{W}_1(T)$ when $T$ acts on an infinite-dimensional space. First, we consider the question of uniqueness.

\begin{corollary}\label{cor:good_uniqueness}
Let $K \subseteq \bC^d$ be a compact convex set, and let $H$ be a Hilbert space (of any finite or infinite dimension). If $T \in \cK(H)^d$ is minimal for its matrix range $\mathcal{W}(T) = \Wmin{}(K)$, then the following hold.
\begin{enumerate}
\item If $K$ has infinitely many extreme points, then $\cH$ is separable and infinite-dimensional, and $T$ is diagonal with eigenvalues at the nonzero extreme points of $K$, which are isolated in the extreme points and form a sequence tending to zero.
\item If $T$ acts on an $n$-dimensional space $H$, then $K$ is a polyhedron with exactly $n$ vertices, and $T$ is diagonal with eigenvalues at the vertices of $K$.
\item If $K$ is a polyhedron with $n$ vertices, {\blue \textbf{none of which are 0}}, then $\cH$ must be $n$-dimensional, and $T$ is diagonal with eigenvalues at the vertices of $K$.
\end{enumerate}
\end{corollary}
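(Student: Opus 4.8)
The three statements share a structure, and the plan is to extract from $T$ a diagonal normal subtuple whose matrix range is already all of $\Wmin{}(K)$, and then let minimality do the rest. The engine is the following refinement of the proofs of Theorems \ref{thm:matrix_eigenvector_hunt} and \ref{thm:compact_eigenvector_hunt}. Suppose $\lambda^{(1)},\lambda^{(2)},\dots$ are distinct extreme points of $K$ each of which is detected by a unit vector $v_k\in\cH$, i.e. $\langle T_i v_k,v_k\rangle=\lambda^{(k)}_i$ for all $i$; by the two bulleted facts preceding Theorem \ref{thm:matrix_eigenvector_hunt} this holds for \emph{every} extreme point when $T$ is a matrix tuple and for every \emph{nonzero} extreme point when $T$ is compact. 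Fix a normal dilation $N$ of $T$ with $\sigma(N)\subseteq K$, acting on a Hilbert space containing $\cH$; the vector state of $v_k$ has the same value on $N$ as on $T$, so Lemma \ref{lem:AEPmin} gives $N_i v_k=\lambda^{(k)}_i v_k$, and normality of $N_i$ then gives $N_i^* v_k=\overline{\lambda^{(k)}_i}\,v_k$. Compressing to $\cH$ yields $T_i v_k=\lambda^{(k)}_i v_k$ and $T_i^* v_k=\overline{\lambda^{(k)}_i}\,v_k$ for all $i,k$, while distinct joint eigenvalues of the commuting normal tuple $N$ force $\langle v_k,v_l\rangle=0$ for $k\neq l$. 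Hence $H_0:=\overline{\operatorname{span}}\{v_k\}$ is a joint reducing subspace of $T$, the $v_k$ form an orthonormal basis of it, and $T|_{H_0}$ is the diagonal normal tuple $D$ whose joint eigenvalues are exactly $\lambda^{(1)},\lambda^{(2)},\dots$, each with multiplicity one. By Proposition \ref{prop:minWrange}, $\mathcal{W}(D)=\Wmin{}\big(\operatorname{conv}\overline{\{\lambda^{(k)}\}}\big)$.

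Now I would treat the three cases. In (1), let $\{\lambda^{(k)}\}$ be the complete list of nonzero extreme points of $K$; by Theorem \ref{thm:compact_eigenvector_hunt} this is a countable sequence tending to $0$, and each of its points is isolated in $\operatorname{ext}(K)$ since $0$ is the only possible accumulation point. As $\operatorname{ext}(K)$ is infinite, $0$ lies in $\overline{\{\lambda^{(k)}\}}$, so $\overline{\{\lambda^{(k)}\}}\supseteq\operatorname{ext}(K)$ and $\operatorname{conv}\overline{\{\lambda^{(k)}\}}=K$ by Krein--Milman; thus $\mathcal{W}(D)=\Wmin{}(K)=\mathcal{W}(T)$ and minimality forces $H_0=\cH$. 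So $\cH$ is separable and infinite-dimensional with orthonormal basis $\{v_k\}$, and $T=D$ is diagonal with eigenvalues the nonzero extreme points of $K$. In (2), $T$ is a matrix tuple on $\cH=\bC^n$, so Theorem \ref{thm:matrix_eigenvector_hunt} says $K$ is a polyhedron with some $m\le n$ vertices, all of them joint eigenvalues of $T$; taking $\{\lambda^{(k)}\}$ to be these $m$ vertices we have $\operatorname{conv}\overline{\{\lambda^{(k)}\}}=\operatorname{conv}\{\lambda^{(k)}\}=K$, hence $\mathcal{W}(D)=\Wmin{}(K)$ with $D$ acting on the $m$-dimensional $H_0$, and minimality forces $H_0=\cH$, i.e. $m=n$ and $T=D$. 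In (3), the $n$ vertices of the polyhedron $K$ are all nonzero extreme points, hence joint eigenvalues of $T$ by Theorem \ref{thm:compact_eigenvector_hunt}; taking $\{\lambda^{(k)}\}$ to be these vertices gives $\operatorname{conv}\{\lambda^{(k)}\}=K$, so $\mathcal{W}(D)=\Wmin{}(K)=\mathcal{W}(T)$ with $D$ on the $n$-dimensional $H_0$, and minimality forces $\cH=H_0$ to be $n$-dimensional with $T=D$.

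I expect the only genuinely delicate point to be the bookkeeping around the vertex $0$ -- exactly the place where the \textbf{nonzero} caveat of Theorem \ref{thm:compact_eigenvector_hunt} would otherwise spoil things. It is harmless in (1) because $0$ is automatically an accumulation point of the (infinitely many) nonzero extreme points and so re-enters $\overline{\{\lambda^{(k)}\}}$; it is harmless in (2) because a finite-dimensional $T$ detects \emph{all} extreme points, including $0$ if present; and it is excluded by hypothesis in (3). Thus in every case the diagonal subtuple $D$ genuinely recovers all of $K$. The remaining ingredients -- the normal-operator identity $Nv=\lambda v\iff N^*v=\overline{\lambda} v$, orthogonality of joint eigenvectors of a commuting normal tuple for distinct eigenvalues, the reducing property of $H_0$, and the final invocation of Definition \ref{def:minimalitydefW} -- are routine, and the dimension/vertex counts are immediate once $T=D$ is known.
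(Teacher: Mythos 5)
Your proposal is correct and follows essentially the same route as the paper: use Theorems \ref{thm:matrix_eigenvector_hunt} and \ref{thm:compact_eigenvector_hunt} (via Lemma \ref{lem:AEPmin} and a normal dilation) to extract joint eigenvectors at the relevant extreme points, observe that the resulting diagonal normal summand already has matrix range $\Wmin{}(K)$ in each of the three cases, and invoke minimality to conclude that this summand is all of $T$. Your write-up simply makes explicit the details the paper leaves implicit (orthogonality of the eigenvectors, the reducing property of their span, and the closure/convex-hull bookkeeping around $0$ in case (1)), so there is nothing further to add.
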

\begin{proof}
In cases (1) and (3), application of Theorem \ref{thm:compact_eigenvector_hunt} shows that $T$ admits eigenvectors for joint eigenvalues at each nonzero extreme point of $K$. The assumptions of either case show that the resulting normal summand $N$ of $T$ has $\mathcal{W}(N) = \Wmin{}(K)$. By minimality of $T$, we have that $N = T$, so $T$ is diagonal with the prescribed eigenvalues and the dimension of $H$ is determined by the number of nonzero extreme points of $K$, which is either finite or a sequence tending to zero from Theorem \ref{thm:compact_eigenvector_hunt}. In case (2), we instead apply Theorem \ref{thm:matrix_eigenvector_hunt}, so $T$ has joint eigenvectors for each vertex of $K$ (possibly including zero). By minimality, $T$ must be equal to the resulting normal summand, so $T$ is diagonal and the dimension of $H$ is determined by the number of vertices $n$ of $K$.
\end{proof}

We note that if $H$ is assumed finite-dimensional, then more general uniqueness results were proved in \cite{arvesonmatrix, relaxation, zalar}, either in terms of matrix ranges or free spectrahedra. Therefore, Corollary \ref{cor:good_uniqueness} is primarily of use to determine the shape of $K$ based on $T$ (or vice-versa) when $H$ is infinite-dimensional, or to determine when the dimension of $H$ must be finite or infinite based on other assumptions.

In all cases of the \lq\lq non-pathological\rq\rq\hspace{0pt} Corollary \ref{cor:good_uniqueness}, $T$ is normal, and in particular, $T$ is diagonal with eigenvalues at the isolated extreme points of $K$. Later, in Theorem \ref{thm:easy_spec_thm}, we will demonstrate that regardless of compactness, all minimal \textit{normal} tuples will take this form. In particular, if a normal tuple $N$ is minimal for matrix range $\mathcal{W}(N) = \Wmin{}(K)$, then the isolated extreme points of $K$ must be dense in $\text{ext}(K)$. However, not all minimal tuples are normal, even in the compact case. Indeed, the following example shows why item (3) of Corollary \ref{cor:good_uniqueness} must be only a partial converse to item (2).

\begin{example}\label{exam:simplex_surprise}
Let $K = \textrm{span}\{ (0,0), (1,0), (0, 1) \}$ be the standard simplex in $\bR^2$. Given an orthonormal basis $\{e_1, e_2, \ldots \}$ of an infinite-dimensional separable Hilbert space, let $v_1 = \cfrac{1}{\sqrt{2}} \hspace{3pt} e_1 + \cfrac{1}{\sqrt{4}}  \hspace{3pt} e_2 + \cfrac{1}{\sqrt{8}}  \hspace{3pt} e_3 + \ldots$, and extend $v_1$ to an orthonormal basis $\{v_1, v_2, \ldots\}$ for the same space.

Define the operators $S_i$ and $T_i$ as follows.
\bes
S_1 = \sum_{n=1}^\infty \frac{1}{3n} P_{e_n} \hspace{.3 in} S_2 = \sum_{n=1}^\infty \frac{1}{3n} P_{v_n} \hspace{.3 in} T_1 = \begin{pmatrix} 1 \\  & 0  \\ & &  S_1 \end{pmatrix} \hspace{.3 in} T_2 = \begin{pmatrix} 0 \\  & 1  \\ & &  S_2 \end{pmatrix} 
\ees
Now, $T = (T_1, T_2) \in \cK(H)^2_{sa}$ is a compact tuple of positive operators whose numerical range includes $(0,0)$, $(1,0)$, and $(0, 1)$, where we note that $(0,0)$ is detected by a \textit{limit} of vector states. Moreover, we have $0 \leq T_1 + T_2 \leq I$, so it follows that $W_1(T)$ is precisely equal to $K$. Since $K$ is a simplex, there is only one matrix convex set with $K$ as its scalar level, and $\mathcal{W}(T) = \Wmin{}(K)$. 

Suppose $\widetilde{H}$ is a reducing subspace for $T$ such that the restriction $R$ has $\mathcal{W}(R) = \Wmin{}(K)$. By Theorem \ref{thm:compact_eigenvector_hunt}, $R$ has joint eigenvectors for the eigenvalues $(1,0)$ and $(0,1)$. Since $S_1 + S_2 \leq \frac{2}{3} I$, the only possible eigenvectors are those exhibited in the direct sum decomposition of $T$. Therefore, to show $\widetilde{H} = H$, we need only show that $\widetilde{H}$ includes the entire domain of the summand $S = (S_1, S_2)$. The intersection of $\widetilde{H}$ to the domain of $S$ is a reducing subspace for $S$, which we denote by $L$. The subspace $L$ is nontrivial, as we must have $(0,0) \in W_1(R)$, so $L$ includes a vector $x$ with $\langle x, e_n \rangle \not= 0$ for some fixed $n$. Since $S_1$ has distinct nonzero eigenvalues at the $e_i$, manipulation of the functional calculus shows that this particular $e_n$ belongs to $L$. However, $\langle e_n, v_1 \rangle \not= 0$, so applying the same trick to the eigenvector basis of $S_2$ shows that $v_1 \in L$. Finally, for all $m \in \bZ^+$, $\langle v_1, e_m \rangle \not= 0$, so examining $S_1$ again shows that $e_m \in L$ for all $m$, and $L$ is the entire domain of $S$. That is, $S$ is irreducible, $\widetilde{H} = H$, and $T$ is minimal for its matrix range. 

Finally, note that by replacing $\frac{1}{3n}$ with $\frac{1}{3n^p}$, $p \in [1, \infty)$, one can construct uncountably many examples of $T$, no two of which are unitarily equivalent.
\end{example}

\begin{corollary}\label{cor:uniqueness_issues}
Let $K \subseteq \bC^d$ be a polyhedron with at least 3 vertices, one of which is $0$, and fix a separable infinite-dimensional Hilbert space $H$. Then there are uncountably many unitarily inequivalent tuples $T \in \cK(H)^d$ such that $\mathcal{W}(T) = \Wmin{}(K)$ and $T$ is minimal for its matrix range.
\end{corollary}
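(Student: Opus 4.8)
The plan is to imitate and generalize Example~\ref{exam:simplex_surprise}: for each parameter $p\in[1,\infty)$ I would produce a compact minimal tuple $T_p$ with $\mathcal{W}(T_p)=\Wmin{}(K)$ of the form $T_p=N\oplus S_p$, where $N$ is a finite-dimensional diagonal normal tuple carrying all of the \emph{nonzero} vertices of $K$, and $S_p$ is an infinite-dimensional irreducible compact tuple whose only role is to supply the vertex $0$ while keeping its matrix range inside $\Wmin{}(K)$. To set up, I would pass to the self-adjoint picture so that $K\subseteq\bR^d$ and list the vertices as $0,\lambda^{(1)},\dots,\lambda^{(m)}$ with $m\ge 2$. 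A short check shows $\lambda^{(1)},\lambda^{(2)}$ are linearly independent — if $\lambda^{(2)}$ were a scalar multiple of $\lambda^{(1)}$, or either vanished, one of $0,\lambda^{(1)},\lambda^{(2)}$ would be a proper convex combination of the other two, contradicting extremality in $K$ — so $\Pi:=\conv(0,\lambda^{(1)},\lambda^{(2)})$ is a genuine $2$-simplex inside $K$ with $\Wmax{}(\Pi)=\Wmin{}(\Pi)$ by (\ref{eq:simplex2tod-1}), and no other vertex of $K$ lies in $\Pi$.

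Next I would construct $S_p$. On a separable infinite-dimensional $H_0$ with orthonormal basis $\{e_n\}$, put $v_1=\sum_n 2^{-n/2}e_n$, extend to an orthonormal basis $\{v_n\}$, and set $A_p:=\sum_n\frac{1}{3n^p}P_{e_n}$ and $B_p:=\sum_n\frac{1}{3n^p}P_{v_n}$, which are compact positive operators with $A_p+B_p\le\frac23 I$ and $(A_p,B_p)$ irreducible, exactly as verified in Example~\ref{exam:simplex_surprise}. Define $S_p=(S_1,\dots,S_d)$ on $H_0$ by $S_k:=\lambda^{(1)}_k A_p+\lambda^{(2)}_k B_p$. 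The affine-combination form is bookkeeping: any state $\psi$ sends $(S_1,\dots,S_d)$ to $\psi(A_p)\lambda^{(1)}+\psi(B_p)\lambda^{(2)}$ with $\psi(A_p),\psi(B_p)\ge 0$ of sum $\le\frac23$, so $\mathcal{W}_1(S_p)\subseteq\Pi$, whence $\mathcal{W}(S_p)\subseteq\Wmax{}(\Pi)=\Wmin{}(\Pi)\subseteq\Wmin{}(K)$; since $A_p+B_p$ is compact on an infinite-dimensional space, $0\in\sigma(A_p+B_p)$ and a limit of vector states gives $0\in\mathcal{W}_1(S_p)$. Linear independence of $\lambda^{(1)},\lambda^{(2)}$ lets one recover $A_p,B_p$ as real-linear combinations of $S_1,\dots,S_d$, so any reducing subspace for $S_p$ reduces $(A_p,B_p)$ and hence $S_p$ is irreducible. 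Finally, $A_p+B_p\le\frac23 I<I$ forces every joint eigenvalue of $S_p$ to be a point $\alpha\lambda^{(1)}+\beta\lambda^{(2)}\in\Pi$ with $\alpha,\beta\ge 0$, $\alpha+\beta\le\frac23$, which can never be a vertex of $K$ (a nonzero vertex of $\Pi$ would need $\alpha=1$ or $\beta=1$, and the other vertices of $K$ are not in $\Pi$); thus $S_p$ has no joint eigenvector at any vertex of $K$.

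Now take $N$ to be the diagonal tuple on $\bC^m$ with joint eigenvalues $\lambda^{(1)},\dots,\lambda^{(m)}$, set $T_p:=N\oplus S_p$ on $H:=\bC^m\oplus H_0$ (identified with the given space), and note that $T_p$ is a tuple of compact self-adjoint operators. For $\mathcal{W}(T_p)=\Wmin{}(K)$ there are two inclusions. For $\subseteq$: Proposition~\ref{prop:introcara} gives a normal dilation $M$ of $S_p$ with $\sigma(M)\subseteq\overline{\operatorname{ext}(\Pi)}=\{0,\lambda^{(1)},\lambda^{(2)}\}\subseteq K$, so $T_p$ is a compression of the normal tuple $N\oplus M$, whose spectrum lies in $K$, and hence $\mathcal{W}(T_p)\subseteq\mathcal{W}(N\oplus M)=\Wmin{}(\conv\sigma(N\oplus M))\subseteq\Wmin{}(K)$ by Proposition~\ref{prop:minWrange}. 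For $\supseteq$: $\mathcal{W}(T_p)$ is matrix convex and contains $\mathcal{W}(N)=\Wmin{}(\conv(\lambda^{(1)},\dots,\lambda^{(m)}))$ and the scalar point $0\in\mathcal{W}_1(S_p)\subseteq\mathcal{W}_1(T_p)$, so $\mathcal{W}_1(T_p)\supseteq\conv(0,\lambda^{(1)},\dots,\lambda^{(m)})=K$ and therefore $\mathcal{W}(T_p)\supseteq\Wmin{}(K)$. For minimality, let $\widetilde H$ reduce $T_p$ with $\mathcal{W}(T_p|_{\widetilde H})=\Wmin{}(K)$; the restriction is compact, so Theorem~\ref{thm:compact_eigenvector_hunt} forces a joint eigenvector at each nonzero vertex $\lambda^{(i)}$, and since $S_p$ has none these must be the standard basis vectors of $\bC^m$, giving $\bC^m\subseteq\widetilde H$; then $\widetilde H\cap H_0$ reduces $S_p$ and is nonzero (otherwise $T_p|_{\widetilde H}=N$, whose first level $\conv(\lambda^{(1)},\dots,\lambda^{(m)})$ omits $0\in K$), so irreducibility of $S_p$ gives $\widetilde H=H$. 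Lastly, for the uncountable family: if $U$ is unitary with $UT_{p,k}U^{*}=T_{q,k}$ for all $k$, then $U$ carries the joint eigenvectors of $T_p$ at each $\lambda^{(i)}$ to those of $T_q$; these span exactly the summand $\bC^m$ in both tuples (as $S_p,S_q$ have no such eigenvectors), so $U\bC^m=\bC^m$, hence $UH_0=H_0$ and $US_pU^{*}=S_q$, hence (recovering $A$ by a linear functional) $UA_pU^{*}=A_q$, which forces $\{\tfrac{1}{3n^p}\}_n=\{\tfrac{1}{3n^q}\}_n$ and thus $p=q$.

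The step I expect to require the most care, and the conceptual heart of the argument, is engineering $S_p$ so that it contributes exactly the vertex $0$ and nothing else that would disturb the matrix range: the construction leans on $\Pi$ being a simplex (which pins $\mathcal{W}(S_p)$ between $\Wmax{}(\Pi)$ and $\Wmin{}(\Pi)$) and on the strict inequality $A_p+B_p\le\frac23 I<I$, which simultaneously keeps $\mathcal{W}_1(S_p)$ away from the nonzero vertices of $K$ — needed both for minimality and for unitary inequivalence — while still allowing $0\in\mathcal{W}_1(S_p)$ as a limit of vector states. Once these points are in place, the remainder is a direct appeal to Theorem~\ref{thm:compact_eigenvector_hunt} and Proposition~\ref{prop:minWrange} together with the irreducibility computation already carried out in Example~\ref{exam:simplex_surprise}.
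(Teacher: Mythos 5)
Your proposal is correct and is essentially the paper's proof: the paper likewise takes a diagonal normal summand at the nonzero vertices together with an irreducible summand built from three vertices $0, v_1, v_2$ via the technique of Example~\ref{exam:simplex_surprise} (after a linear change of coordinates), with the uncountable family coming from the parameter $p$ in $\tfrac{1}{3n^p}$. Your write-up just carries out in detail the verifications (irreducibility, absence of joint eigenvectors of $S_p$ at nonzero vertices, minimality via Theorem~\ref{thm:compact_eigenvector_hunt}) that the paper delegates to that example.
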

\begin{proof}
Let $T$ have joint eigenvalues at all of the nonzero vertices, and produce an additional summand $S$ of $T$ which is determined from a selection of three vertices $0, v_1, v_2$ and the technique of Example \ref{exam:simplex_surprise}, after an invertible linear transformation.
\end{proof}

The above results may also be formulated in the language of operator systems, after we recall some additional notation and definitions.

\begin{definition}
If $T = (T_1, \ldots, T_d) \in \cB(H)^d$, then we let $S_T$ denote the operator system (i.e., self-adjoint unital subspace of $\cB(H)$) generated by $T_1, \ldots, T_d$. Given an operator system $S$, we let $C^*(S)$ denote the $C^*$-algebra generated by $S$, which is necessarily unital.
\end{definition}

If $S$ is an operator system inside a unital $C^*$-algebra $\mathcal{A}$, the particular structure of $\mathcal{A}$ is not generally relevant unless $S$ generates $\mathcal{A}$. It is usually easier to ignore $\mathcal{A}$ and write \lq\lq $S \subseteq C^*(S)$ is an operator system,\rq\rq\hspace{0pt} where it is important to note that the operator system structure of $S$ alone might not determine the $C^*$-algebra $S$ generates. That is, it is possible for two operator systems $S_1$ and $S_2$ to be completely isometrically isomorphic while $C^*(S_1)$ and $C^*(S_2)$ are not isomorphic. However, given any concrete representation $S \subseteq C^*(S)$, there is a quotient of $C^*(S)$ that produces the \lq\lq smallest\rq\rq\hspace{0pt} $C^*$-algebra into which $S$ embeds.

\begin{definition}
Let $S$ be an operator system. The \textit{Shilov ideal} of $S$ inside $C^*(S)$ is the largest ideal $I$ such that the quotient $C^*(S) \to C^*(S)/I$ is completely isometric on $S$. The $C^*$-\textit{envelope} of $S$, denoted $C^*_e(S)$, is the quotient of $C^*(S)$ by the Shilov ideal.
\end{definition}

The existence of the Shilov ideal is a very deep result, and the structure of $C^*_e(S)$ does not depend on the choice of the initial concrete representation of $S$ inside a $C^*$-algebra (see \cite{evgenios} for additional background). We begin with a natural and well-known example.

\begin{example}\label{ex:naturalenvelope}
Let $T \in \cB(H)^d$ be a tuple with $\mathcal{W}(T) = \Wmin{}(K)$. Then $C^*_e(S_T)$ is isomorphic to the commutative $C^*$-algebra $C(\overline{\text{ext}(K)})$ of continuous complex-valued functions on $\overline{\text{ext}(K)}$. To see this, let $Z = (Z_1, \ldots, Z_d)$ denote the tuple of coordinate functions on $\overline{\text{ext}(K)}$. Since the convex hull of $\overline{\text{ext}(K)}$ is $K$, we have that $\mathcal{W}(Z) = \Wmin{}(K) = \mathcal{W}(T)$, and there is a unital completely isometric map $S_T \to S_Z$ mapping $T_i \mapsto Z_i$ by \cite[Theorem 5.1]{DDSS}. It follows that $C_e^*(S_T) \cong C_e^*(S_Z)$ is a quotient of $C^*(S_Z) = C(\overline{\text{ext}(K)})$. If $I$ is a nontrivial ideal in $C^*(S_Z)$, then $C^*(S_Z) / I$ may be written as $C(X)$ for a proper compact subset $K$ of $\overline{\text{ext}(K)}$. If $\widetilde{Z} := (Z_1 + I, \ldots, Z_d + I)$, then it follows that $\mathcal{W}(\widetilde{Z}) = \Wmin{}(\text{conv}(X))$, where $\text{conv}(X)$ is a proper subset of $K$. In particular, $\mathcal{W}(\widetilde{Z}) \neq \mathcal{W}(Z)$, so the quotient map by $I$ is not completely isometric as a map $S_Z \to S_{\widetilde{Z}}$ by \cite[Theorem 5.1]{DDSS}. We conclude that the Shilov ideal of $S_Z$ in $C^*(S_Z)$ is trivial, and $C_e^*(S_T) \cong C_e^*(S_Z) \cong C(\overline{\text{ext}(K)})$.
\end{example}

Because the $C^*$-envelope computed above is certainly commutative, we find that each example found using the construction in Corollary \ref{cor:uniqueness_issues} must have nontrivial Shilov ideal.

\begin{corollary}
For any $d \geq 2$, there exist uncontably many unitarily inequivalent tuples $T \in \cK(H)^d_{sa}$ of compact self-adjoint operators such that $T$ is minimal for its matrix range, but $C^*(S_T) \not\cong C^*_e(S_T)$. In particular, the Shilov ideal of $S_T$ in $C^*(S_T)$ is nontrivial.
\end{corollary}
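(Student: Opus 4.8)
The plan is to feed the tuples constructed in Corollary~\ref{cor:uniqueness_issues} into the $C^*$-envelope computation of Example~\ref{ex:naturalenvelope}. Fix $d \geq 2$ and take $K \subseteq \bR^d$ to be the standard simplex $\mathrm{conv}(0, e_1, \ldots, e_d)$, a polyhedron with $d+1 \geq 3$ vertices, one of which is $0$. Since $\bR^d \subseteq \bC^d$, Corollary~\ref{cor:uniqueness_issues} yields uncountably many pairwise unitarily inequivalent tuples $T \in \cK(H)^d$ with $\mathcal{W}(T) = \Wmin{}(K)$, each minimal for its matrix range. Because $K$ is a \emph{real} polyhedron, the nonzero vertices used as joint eigenvalues lie in $\bR^d$, and the building blocks of Example~\ref{exam:simplex_surprise} (sums of positive scalar multiples of orthogonal projections, possibly transported by a real invertible linear map) are self-adjoint; hence each such $T$ may be taken in $\cK(H)^d_{sa}$.

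Now fix one such $T$. Since $\mathcal{W}(T) = \Wmin{}(K)$, Example~\ref{ex:naturalenvelope} gives $C^*_e(S_T) \cong C(\overline{\mathrm{ext}(K)})$; as $K$ is a polyhedron, $\mathrm{ext}(K) = \{0, e_1, \ldots, e_d\}$ is a finite discrete set, so $C^*_e(S_T) \cong \bC^{d+1}$ is commutative. On the other hand, by construction $T$ has a direct summand $S$ produced from three vertices $0, v_1, v_2$ via the recipe of Example~\ref{exam:simplex_surprise}, after an invertible linear transformation. In that example the resulting tuple is \emph{irreducible} and acts on an infinite-dimensional space; an irreducible family of commuting self-adjoint operators must act on a one-dimensional space (its joint spectral projections would otherwise be nontrivial reducing subspaces), so the two distinguished operators of $S$ do not commute. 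An invertible linear change of coordinates does not change the generated $C^*$-algebra (each original generator is an affine combination of the transformed generators and $1$), so compressing $C^*(S_T)$ to this summand shows that $C^*(S_T)$ is noncommutative. A noncommutative $C^*$-algebra is never $*$-isomorphic to a commutative one, so $C^*(S_T) \not\cong C^*_e(S_T)$.

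Finally, $C^*_e(S_T) = C^*(S_T)/J$ where $J$ is the Shilov ideal of $S_T$ in $C^*(S_T)$. If $J$ were trivial, the quotient map would be a $*$-isomorphism $C^*(S_T) \cong C^*_e(S_T)$, contradicting the previous paragraph; hence $J$ is nontrivial. Since this argument applies to each of the uncountably many unitarily inequivalent tuples $T$ furnished by Corollary~\ref{cor:uniqueness_issues}, the proof is complete.

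The only step requiring genuine care is the noncommutativity of $C^*(S_T)$: everything rests on the Example~\ref{exam:simplex_surprise} summand genuinely contributing a non-commuting pair of operators. This is precisely the irreducibility (equivalently, non-normality) established there, since the two operators have simple spectra with incompatible eigenvector bases $\{e_n\}$ and $\{v_n\}$. Once that is noted, the clash with the commutative envelope $C(\overline{\mathrm{ext}(K)})$ — and hence the nontriviality of the Shilov ideal — is immediate.
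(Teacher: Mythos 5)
Your proof is correct and follows essentially the same route as the paper: invoke Corollary \ref{cor:uniqueness_issues} for a polyhedron with $0$ as a vertex, note via Example \ref{ex:naturalenvelope} that $C^*_e(S_T)$ is commutative, and observe that the non-normal irreducible summand forces $C^*(S_T)$ to be noncommutative, so the Shilov ideal is nontrivial. You are somewhat more careful than the paper in two respects — choosing $K$ real so that the tuples land in $\cK(H)^d_{sa}$, and spelling out why irreducibility on an infinite-dimensional space rules out commutativity — but the argument is the same one.
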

\begin{proof}
In Corollary \ref{cor:uniqueness_issues}, the constructed tuple $T$ has $C^*_e(S_T)$ isomorphic to the commutative $C^*$-algebra of functions on the finite set $\text{ext}(K)$. However, the operators $T_i$ do not commute, so $C^*(S_T)$ cannot be isomorphic to $C^*_e(S_T)$, and the Shilov ideal must be nontrivial.
\end{proof}

Since there are now numerous examples of minimal compact tuples which are not uniquely determined by the matrix ranges, even when the matrix ranges considered are of the form $\Wmin{}(K)$, we can consider relaxing the problem somewhat. One option is to replace minimality with a stronger condition, as in the following definition.

\begin{definition}\label{def:fullycompressedtuples}
A tuple $T = (T_1, \ldots, T_d) \in \cB(H)^d$ is said to be \textit{fully compressed} if the compression of $T$ to any proper subspace of $H$ has strictly smaller matrix range.
\end{definition}

Any tuple which is fully compressed is also minimal in the sense of Definition \ref{def:minimalitydefW}, but the reverse implication certainly does not hold. In fact, it is easy to see that if a compact tuple has matrix range $\mathcal{W}(T) = \Wmin{}(K)$, then the assumption that $T$ is fully compressed allows us to uniquely determine $T$. That is, the counterexamples we consider in this section are no longer counterexamples in the new framework.

\begin{proposition}\label{prop:compression_min_Ik}
Suppose that $S \in \cK(H_1)^d$ and $T \in \cK(H_2)^d$ are fully compressed compact tuples with matrix range $\Wmin{}(K)$. Then $S$ and $T$ are unitarily equivalent. 
\end{proposition}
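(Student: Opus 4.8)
The plan is to show that a fully compressed compact tuple with matrix range $\Wmin{}(K)$ is essentially forced to be the diagonal tuple whose eigenvalues are the nonzero extreme points of $K$, together with (possibly) an extra piece that detects $0$ without a vector state; but the fully compressed hypothesis should kill that extra piece. First I would invoke Theorem \ref{thm:compact_eigenvector_hunt}: since $\mathcal{W}(T) = \Wmin{}(K)$ and $T$ is a compact tuple, each nonzero extreme point $\lambda$ of $K$ is a joint eigenvalue of $T$, and $\text{ext}(K)$ is finite or a sequence tending to $0$. Collecting joint eigenvectors at all nonzero extreme points yields a normal (in fact diagonal) reducing summand $N$ of $T$ whose spectrum contains $\text{ext}(K) \setminus \{0\}$; by Proposition \ref{prop:minWrange}, $\mathcal{W}(N) = \Wmin{}(\text{conv}(\overline{\text{ext}(K) \setminus \{0\}}))$. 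Write $T \cong N \oplus M$ on $H = H_N \oplus H_M$.

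The crux is to analyze $M$. If $0 \notin K$ or more precisely if $0$ is not needed, then $\text{conv}(\overline{\text{ext}(K)\setminus\{0\}}) = K$, so $\mathcal{W}(N) = \Wmin{}(K)$ already, and the full compression hypothesis forces $H_M = 0$, giving $T \cong N$ is the diagonal tuple uniquely determined by $K$ (its eigenvalues are exactly $\text{ext}(K)$, each appearing once — here I would use full compression again to rule out repeated eigenvalues, since a repeated diagonal entry can be compressed away without shrinking the matrix range). Then $S$ and $T$ are both unitarily equivalent to this canonical diagonal tuple, hence to each other. The remaining case is when $0 \in \text{ext}(K)$ and $\mathcal{W}(N) = \Wmin{}(L)$ for $L = \text{conv}(\overline{\text{ext}(K)\setminus\{0\}}) \subsetneq K$, so the summand $M$ must "add back" the point $0$: we need $\mathcal{W}(M) \not\subseteq \Wmin{}(L)$ even though $0$ is an extreme point of $K$ that $M$ cannot detect with a vector state (that is the content of the pathology in Example \ref{exam:simplex_surprise}). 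Here I expect the main obstacle, and the key claim to establish is: \emph{a fully compressed compact tuple $M$ cannot have $0 \in \mathcal{W}_1(M)$ unless $0$ is attained by a genuine vector state}. Indeed, if $0 \in \mathcal{W}_1(M)$ is attained only as a limit of vector states $\langle M_i v_k, v_k\rangle \to 0$ with no actual eigenvector at $0$, then by compactness $\|M_i v_k\| \to 0$ along a subsequence, and one can compress $M$ onto the orthogonal complement of $\text{span}\{v_k : k \le k_0\}$ for large $k_0$; the point is that this compression $M'$ still satisfies $0 \in \overline{\mathcal{W}_1(M')}$ (the tails of the $v_k$ still approximately achieve $0$) while $\mathcal{W}(M') \subseteq \mathcal{W}(M)$, and one argues the matrix range does not actually shrink — contradicting full compression, which demands strict shrinkage under \emph{every} proper compression. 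Thus $0$ must be a genuine joint eigenvalue of $M$, so $0$ is a joint eigenvalue of $T$, the eigenvector can be absorbed into the normal summand, and we are reduced to the previous case.

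The subtle point in the argument just sketched is that "full compression" requires the matrix range to shrink under \emph{every} proper compression, not just reducing ones, so one has a lot of leverage; the work is in verifying that compressing off finitely many near-null vectors does not change $\mathcal{W}(M)$, which follows because $\mathcal{W}(M)$ is closed and the compressed tuple approximates $M$ in a suitable sense (finite-rank perturbation plus the uniform bound from matrix convexity), so its matrix range is both contained in and Hausdorff-close to $\mathcal{W}(M)$, forcing equality. I should also double-check the uniqueness of the multiplicity of each nonzero extreme eigenvalue under full compression: if $\lambda \ne 0$ is a joint eigenvalue of $T$ with a two-dimensional joint eigenspace, compressing onto a codimension-one subspace that still contains a $\lambda$-eigenvector leaves the matrix range unchanged (the eigenvalue $\lambda$, and the rest of $T$, are untouched), again contradicting full compression; so each nonzero extreme point of $K$ appears with multiplicity exactly one. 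Assembling these pieces: both $S$ and $T$ are unitarily equivalent to the diagonal tuple $\text{diag}(\text{ext}(K))$ (one copy of each extreme point, the point $0$ included only if it is an extreme point), and therefore $S \cong T$.

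Finally, I would note that the infinite-dimensional bookkeeping is harmless: when $K$ has infinitely many extreme points, Theorem \ref{thm:compact_eigenvector_hunt} guarantees they form a sequence tending to $0$, the associated diagonal operator is genuinely compact on a separable space, and the argument above applies verbatim. I expect the proof of the key claim about near-null vectors and full compression to be the only place demanding real care; everything else is a clean combination of Theorem \ref{thm:compact_eigenvector_hunt}, Proposition \ref{prop:minWrange}, and the definition of fully compressed.
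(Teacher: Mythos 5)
Your overall strategy is the same as the paper's: use Theorem \ref{thm:compact_eigenvector_hunt} to split off the diagonal summand with one eigenvalue at each nonzero extreme point, and then use the fully compressed hypothesis to kill everything else, the only real work being the summand that witnesses $0$ when $0$ is a vertex of $K$. The case analysis, the multiplicity-one argument, and the final assembly are all fine. However, the justification you offer for the crucial step --- that compressing onto the orthogonal complement of $\mathrm{span}\{v_k : k \leq k_0\}$ does not shrink the matrix range --- does not work as written. First, the compressed tuple is a finite-rank but \emph{not} a small-norm perturbation of $M$: only the vector states $\langle M_i v_k, v_k\rangle$ are near $0$, not the vectors $M_i v_k$ themselves (and certainly not for the early $v_k$ being discarded), so there is no Hausdorff-closeness to exploit. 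Second, even granting closeness, ``contained in and $\varepsilon$-Hausdorff-close for one fixed $\varepsilon$'' does not force equality of closed convex sets. A related slip: $\langle M_i v_k, v_k\rangle \to 0$ does not imply $\|M_i v_k\| \to 0$ for compact self-adjoint $M_i$ (consider $\mathrm{diag}(1,-1)$ and $(1,1)/\sqrt{2}$); that implication needs $v_k \rightharpoonup 0$.

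The fix is simpler than what you attempt and needs no approximation of matrix ranges. First dispose of the weak-limit issue: if the $v_k$ have a nonzero weak limit point $v$, then $\langle M_i v, v\rangle = 0$, so normalizing gives a genuine vector state at the extreme point $0$ and hence, via a normal dilation and Lemma \ref{lem:AEPmin}, a joint eigenvector --- the case you already handle. Otherwise $v_k \rightharpoonup 0$, so $\langle v_k, v_j\rangle \to 0$ for the finitely many discarded $v_j$, and your tail-projection computation shows $0 \in \mathcal{W}_1(M')$. Now the proper compression of $T$ is $N \oplus M'$; you have $\mathcal{W}(N \oplus M') \subseteq \mathcal{W}(T) = \Wmin{}(K)$ because compression by an isometry is UCP, and $\mathcal{W}_1(N \oplus M')$ is a closed convex set containing $\mathrm{ext}(K)\setminus\{0\}$ and $0$, hence containing $K$. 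Since $\Wmin{}(K)$ is the smallest closed matrix convex set whose first level contains $K$, this forces $\mathcal{W}(N \oplus M') = \Wmin{}(K)$, contradicting full compression. This is exactly the mechanism in the paper's proof, which phrases the step as: any proper compression $Q'$ of the residual summand retaining $0 \in \mathcal{W}_1(Q')$ yields a proper compression of $T$ with unchanged matrix range.
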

\begin{proof}
Since any fully compressed tuple is automatically minimal, Corollary \ref{cor:good_uniqueness} shows that $S$ and $T$ are unitarily equivalent if $\text{ext}(K)$ is an infinite set, or if $K$ is a polyhedron which does not have $0$ as a vertex. If $K$ is a polyhedron with $0$ as a vertex, then any fully compressed compact tuple with matrix range $\Wmin{}(K)$ is of the form $\lambda_1 \oplus \cdots \oplus \lambda_n \oplus Q$, where $\lambda_1, \ldots, \lambda_n$ are the nonzero vertices of $K$ and $0 \in \mathcal{W}_1(Q)$. If $Q$ is anything other than $(0, \ldots, 0) \in \bC^d$, there is a proper compression $Q^\prime$ of $Q$ with $0 \in \mathcal{W}_1(Q)$, and hence a proper compression of the original tuple with the same matrix range $\Wmin{}(K)$.
\end{proof}

It is conceivable that in the general setting, if $T$ is a fully compressed tuple, then $\mathcal{W}(T)$ determines $T$ up to unitary equivalence. However, one should expect the closed and bounded matrix convex sets $\mathcal{S}$ which may be obtained as the matrix ranges of fully compressed tuples to be fairly restricted. 

We now consider a separate, but very much related problem which was examined in \cite{DDSS}. If $T$ is compact with $\mathcal{W}(T) = \Wmin{}(K)$, does there exist a summand of $T$ which is minimal for the same matrix range? For this problem, there is again an issue with the point $0$ and detection by non-vector states, but the obstruction is far more elementary.

\begin{example}\label{ex:not_a_surprise}
Let $K \not= \{0\}$ be a polyhedron with vertices $0, v_1, \ldots, v_n$. Then the diagonal tuple $N$ with joint eigenvalues $v_1, \ldots, v_n, \frac{1}{2}v_n, \frac{1}{3}v_n, \ldots$ is compact and normal with matrix range $\mathcal{W}(N) = \Wmin{}(K)$. However, it is not minimal for its matrix range, and it admits no minimal summand with the same matrix range, as any summand with the same matrix range has infinitely many eigenvalues converging to zero. 
\end{example} 

However, if one carefully avoids zero, then minimal summands are easy to pick out, as in the following corollary.

\begin{corollary}\label{cor:good_summands}
Let $T \in \cK(H)^d$ have matrix range $\mathcal{W}(T) = \Wmin{}(K)$. Then in any of the following circumstances, there exists a decomposition $T \cong N \oplus M$ where $N$ is minimal for the same matrix range. 
\end{corollary}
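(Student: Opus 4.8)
The plan is to produce $N$ as a diagonal \emph{normal} summand of $T$ whose spectrum consists exactly of the extreme points of $K$ that one can detect by vector states (no limits of vector states needed), and then to check directly that this $N$ is minimal for $\Wmin{}(K)$. As always we may pass to the self-adjoint picture, replacing $T$ by a $2d$-tuple of compact self-adjoint operators; this changes neither $\mathcal{W}(T)$ nor $\Wmin{}(K)$, and it has the bonus that joint eigenvectors of $T$ for distinct joint eigenvalues are automatically orthogonal.

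\emph{Extraction of eigenvectors and formation of the summand.} In each listed circumstance I would invoke the appropriate eigenvector theorem. If $\cH$ is finite-dimensional, Theorem \ref{thm:matrix_eigenvector_hunt} supplies a joint eigenvector of $T$ at every vertex of $K$ (including $0$, should $0$ be a vertex). Otherwise $T$ is genuinely compact on an infinite-dimensional space, and Theorem \ref{thm:compact_eigenvector_hunt} supplies a joint eigenvector at every \textbf{nonzero} extreme point of $K$, while also telling us that $\text{ext}(K)$ is finite or a null sequence. Choosing one unit eigenvector $v_\lambda$ for each relevant $\lambda$ gives an orthonormal set; let $\cH_N$ be its closed span. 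Being a closed span of joint reducing subspaces, $\cH_N$ is joint reducing, so $T \cong N \oplus M$ with $N := T|_{\cH_N}$, and $N$ is the diagonal normal tuple with $\sigma(N) = \overline{\{\lambda : v_\lambda \text{ was chosen}\}}$ (and $N$ is compact, being the restriction of a compact tuple).

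\emph{Checking the matrix range and minimality.} By Proposition \ref{prop:minWrange}, $\mathcal{W}(N) = \Wmin{}(\text{conv}(\sigma(N)))$, so it remains to see that $\text{conv}(\sigma(N)) = K$; this is precisely where the case hypotheses are used, and is the one spot requiring care (Example \ref{ex:not_a_surprise} shows the statement fails without them). When $\cH$ is finite-dimensional we chose an eigenvector at \emph{every} vertex, so $\text{conv}(\sigma(N)) = \text{conv}(\text{ext}(K)) = K$; when $K$ is a polyhedron with no vertex equal to $0$, every vertex is a nonzero extreme point, so again $\text{conv}(\sigma(N)) = \text{conv}(\text{ext}(K)) = K$; and when $K$ has infinitely many extreme points, $\text{ext}(K)$ is a sequence tending to $0$, hence $0 \in \overline{\text{ext}(K)\setminus\{0\}}$ and $\overline{\sigma(N)} = \overline{\text{ext}(K)\setminus\{0\}} = \overline{\text{ext}(K)}$, whose convex hull is $K$. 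For minimality, observe that $N$ is a diagonal self-adjoint tuple whose distinct joint eigenvalues are all extreme points of $K$, with $0$ the only possible accumulation point and $0 \notin \sigma_p(N)$; thus each rank-one spectral projection onto a $v_\lambda$ lies in $C^*(N)$, so every proper reducing subspace of $N$ omits some $v_\lambda$, and the restriction then has spectrum missing the extreme point $\lambda$. Its matrix range is therefore $\Wmin{}$ of a proper convex subset of $K$, already strictly smaller at the first level, so $N$ is minimal for $\mathcal{W}(N) = \Wmin{}(K)$, as required.

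The genuine content is concentrated in the convex-hull bookkeeping of the middle step: one must track whether $0$ is an extreme point, whether it is isolated in $\text{ext}(K)$, and whether it lies in the convex hull of the remaining extreme points — and the three enumerated circumstances are exactly the ones that make $\text{conv}(\sigma(N)) = K$ automatic. Everything else (orthogonality of eigenvectors, reducibility of $\cH_N$, membership of spectral projections in $C^*(N)$, the failure of extremality after deleting a vertex) is routine, and I would keep those verifications brief, flagging the explicit contrast with the obstruction in Example \ref{ex:not_a_surprise}.
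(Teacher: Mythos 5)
Your construction and minimality check are correct and essentially the paper's own argument for cases (1), (2), and (3): collect joint eigenvectors at the vertices (Theorem \ref{thm:matrix_eigenvector_hunt}) or at the nonzero extreme points (Theorem \ref{thm:compact_eigenvector_hunt}), let $N$ be the resulting diagonal summand, and note that since the distinct joint eigenvalues of $N$ accumulate only at $0$, which is not an eigenvalue of $N$, the rank-one spectral projections lie in $C^*(N,I)$, so every proper reducing subspace drops an isolated extreme point and the restricted tuple already has strictly smaller first level. That is all fine.

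The gap is that the corollary has \emph{four} cases and you have proved three --- your closing sentence even says ``the three enumerated circumstances.'' Case (4), where $T$ has $0$ as a joint eigenvalue, is not subsumed by what you wrote. The only situation it adds beyond (1)--(3) is $K$ a polyhedron with $0$ as a vertex and $H$ infinite-dimensional, and there your recipe gathers eigenvectors only at the \textbf{nonzero} vertices, so $\mathrm{conv}(\sigma(N))$ is the hull of the nonzero vertices, a proper subset of $K$ (the vertex $0$ is extreme, hence not in that hull), and $\mathcal{W}(N) \neq \Wmin{}(K)$. Example \ref{ex:not_a_surprise} shows that in this configuration no minimal summand need exist without an extra hypothesis, so the case cannot be waved away. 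The repair is exactly the hypothesis of case (4): adjoin the assumed joint $0$-eigenvector of $T$ to $\cH_N$, making $N$ the finite-dimensional diagonal tuple with one eigenvalue at each vertex of $K$ including $0$; then $\mathrm{conv}(\sigma(N)) = K$ and minimality follows as in your polyhedron case, since every vertex of a polyhedron is an isolated extreme point. (The paper's statement also carries a uniqueness rider --- $N$ is unique in cases (1)--(3) but possibly not in case (4), in view of Corollary \ref{cor:uniqueness_issues} --- which your write-up does not address, though that is secondary to the missing case.)
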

\begin{enumerate}
\item $H$ has finite dimension (so $K$ is a polyhedron with at most $\text{dim}(H)$ vertices), or
\item $\textrm{ext}(K)$ is infinite (so $\text{dim}(H)$ is infinite, and $\textrm{ext}(K)$ is a sequence tending to $0$), or
\item $K$ is a polyhedron and $0$ is not a vertex of $K$, or
\item $T$ has $0$ as a joint eigenvalue.
\end{enumerate}
We may choose $N$ diagonal with eigenvalues at isolated extreme points of $K$. For cases (1), (2), and (3), this is the only choice of $N$. For case (4), the choice might not be unique.
\begin{proof}
For case (1), application of Theorem \ref{thm:matrix_eigenvector_hunt} shows that $K$ is a polyhedron with at most $\text{dim}(H)$ vertices and $T \cong N \oplus M$, where $N$ is diagonal with eigenvalues for each vertex.

For case (2), Theorem \ref{thm:compact_eigenvector_hunt} shows that $\textrm{ext}(K)$ is a sequence tending to $0$, and we may write $T \cong N \oplus M$ where $N$ is diagonal with a single eigenvalue for each nonzero extreme point of $K$. Note in particular that in this case, every point of $\text{ext}(K) \setminus \{0\}$ is isolated, and $0$ is either not an extreme point, or it is an extreme point which is not isolated. In either case, $0$ is not used as an eigenvalue of $N$, as it is not needed.

For case (3), Theorem \ref{thm:compact_eigenvector_hunt} produces eigenvectors for each vertex of $K$, so $T \cong N \oplus M$ for $N$ the diagonal operator with those eigenvalues.

For case (4), we need only consider a polyhedron $K$ with $0$ as a vertex, as otherwise we may apply case (2) or (3).  Since $T$ by assumption has a joint eigenvector for $0$, and Theorem \ref{thm:compact_eigenvector_hunt} produces joint eigenvectors for the other vertices of $K$, we find that $T \cong N \oplus M$ where $N$ is a finite-dimensional diagonal tuple with an eigenvalue corresponding to each vertex of $K$.

Finally, we consider uniqueness. In cases (1), (2), and (3), if $M$ is another minimal summand of $T$ with the same matrix range, then Theorems \ref{thm:matrix_eigenvector_hunt} and \ref{thm:compact_eigenvector_hunt} produce the diagonal operator $N$ as a summand of $M$, a contradiction. In case (4), however, it is possible that $K$ is a polyhedron with vertices $0$, $v_1$, $v_2$, \ldots, $v_n$, and that $T$ is the direct sum of $0$ and an operator $S$ of the form in Corollary \ref{cor:uniqueness_issues}, which has each $v_i$ as a joint eigenvalue as well as an additional irreducible summand. We may choose a minimal summand $N$ which is diagonal with eigenvalues at $0, v_1, \ldots, v_n$, or we may choose a minimal summand $M = S$.
\end{proof}

As in the comments after Theorem \ref{thm:matrix_eigenvector_hunt}, we note that there is some overlap with Corollary \ref{cor:good_summands} and results in which uniqueness results for matrix ranges or their polar duals apply. We also note that the pathology involving $0$ disappears if one accepts summands of a tuple that is approximately unitarily equivalent to $T$.

\begin{corollary}\label{prop:approxunitaryequivalencefix}
Suppose $T \in \cK(H)^d$ is a tuple of compact operators with matrix range $\mathcal{W}(T) = \Wmin{}(K)$. Then there is a decomposition $H \cong \widetilde{H} \oplus X$ and a tuple $T^\prime \in \cK(H)^d$ such that $T^\prime$ is approximately unitarily equivalent to $T$, $T^\prime$ decomposes as $T^\prime = S \oplus X$, and $S \in \cB(\widetilde{H})^d$ is minimal for the matrix range $\mathcal{W}(S) = \Wmin{}(K)$.
\end{corollary}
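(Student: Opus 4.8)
\emph{Proof proposal.} The plan is to reduce to the self-adjoint case, peel off the diagonal part of $T$ forced by Theorems~\ref{thm:matrix_eigenvector_hunt} and~\ref{thm:compact_eigenvector_hunt}, and then repair the single pathological situation (the one exhibited by Example~\ref{exam:simplex_surprise}) by a Voiculescu-type perturbation that manufactures an honest joint $0$-eigenvector.

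First I would assume $T \in \cK(H)^d_{sa}$; the usual reduction preserves compactness, self-adjointness, and the form of the conclusion. By Theorem~\ref{thm:compact_eigenvector_hunt}, $\text{ext}(K)$ is finite or a null sequence and every nonzero extreme point of $K$ is a joint eigenvalue of $T$; picking one unit joint eigenvector for each such point (these are pairwise orthogonal since $T$ is self-adjoint) yields a reducing decomposition $T \cong N_1 \oplus M$ with $N_1$ diagonal, its eigenvalues being the nonzero extreme points of $K$, and $\mathcal{W}(N_1) = \Wmin{}(K_1)$ for $K_1$ the closed convex hull of those points. A diagonal tuple with distinct eigenvalues at extreme points of its scalar range is minimal (deleting any eigenvector strictly shrinks $\mathcal{W}_1$, using that the removed point is extreme; cf.\ Lemma~\ref{lem:AEPmin} and Corollary~\ref{cor:good_summands}), so $N_1$ is minimal. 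If $0$ is not an isolated extreme point of $K$ then $K_1 = K$ and I finish with $T' = T$, $S = N_1$, and $X = \widetilde{H}_1^{\perp}$. Otherwise $0$ is a vertex of $K$, $\text{ext}(K)$ is finite, $K$ is a polyhedron, $N_1$ is finite-dimensional, and — since $0$ is a vertex of $K$ but $0 \notin K_1$ — necessarily $0 \in \mathcal{W}_1(M)$.

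If in this polyhedral case $0$ is already a joint eigenvalue of $T$ (equivalently of $M$), I split a one-dimensional joint $0$-eigenspace off of $M$, set $S := N_1 \oplus 0_{\bC}$ (diagonal at \emph{all} vertices of $K$, hence minimal with $\mathcal{W}(S) = \Wmin{}(\text{conv}(\text{ext}(K))) = \Wmin{}(K)$), and take $T' = T$ with $R$ the remaining summand of $M$. The one genuinely new case is that $0$ is \emph{not} a joint eigenvalue of $T$, precisely the situation of Example~\ref{exam:simplex_surprise}; Theorem~\ref{thm:matrix_eigenvector_hunt} forbids it in finite dimensions, so $H$ and the space $H_2$ carrying $M$ are infinite-dimensional, and $\bigcap_i \ker M_i = \{0\}$ with the $M_i$ compact forces $H_2$ to be separable. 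The core claim is that $M \sim_a M \oplus 0_{\bC}$. Indeed $\sum_i M_i^2$ is a positive compact operator on the infinite-dimensional $H_2$, hence not invertible, so $0 \in \sigma(\sum_i M_i^2)$; unit vectors $u_n$ with $\|(\sum_i M_i^2)u_n\| \to 0$ then have $\|M_i u_n\|^2 \le \langle (\sum_j M_j^2)u_n, u_n\rangle \to 0$ for each $i$, and a weakly convergent subsequence must tend to $0$ weakly by compactness of the $M_i$ and $\bigcap_i \ker M_i = \{0\}$. A weak$^*$ cluster point of the states $\langle\,\cdot\, u_n, u_n\rangle$, restricted to $\cA := C^*(M_1,\ldots,M_d)$, kills every monomial of positive degree in the $M_i$, hence the ideal $\cA_0$ they generate (which consists of compacts), so it descends to a character $\cA/\cA_0 \cong \bC$ sending each $M_i$ to $0$ and annihilating $\cA \cap \cK(H_2)$; Voiculescu's noncommutative Weyl--von Neumann theorem then gives $M \sim_a M \oplus 0_{\bC}$. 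Taking $S := N_1 \oplus 0_{\bC}$, $R := M$, $T' := N_1 \oplus (M \oplus 0_{\bC}) = S \oplus R$, and identifying the (still separably infinite-dimensional) underlying space with $H$, I obtain $T' \in \cK(H)^d$ with $\mathcal{W}(T') = \Wmin{}(K)$ and $T' \sim_a N_1 \oplus M = T$, since $\sim_a$ is stable under direct sums with a fixed summand.

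I expect the main obstacle to be this last perturbation step, and specifically the passage from ``$0 \in \mathcal{W}_1(M)$'' to ``a character of $C^*(M_1,\ldots,M_d)$ sending the $M_i$ to $0$ and vanishing on the compacts'': this is exactly where compactness, infinite-dimensionality, and the absence of a joint $0$-eigenvector must be combined, and where Voiculescu's theorem enters. The remaining ingredients — the self-adjoint reduction, minimality of the diagonal tuples via Lemma~\ref{lem:AEPmin}, stability of $\sim_a$ under a fixed summand, and the Hilbert space bookkeeping — are routine.
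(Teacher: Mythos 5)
Your proof is correct and follows essentially the same route as the paper: reduce to the one pathological case ($K$ a polyhedron with $0$ as a vertex and $T$ acting on an infinite-dimensional space, the other cases being handled by Theorems~\ref{thm:matrix_eigenvector_hunt} and~\ref{thm:compact_eigenvector_hunt} / Corollary~\ref{cor:good_summands}), and there insert a joint $0$-eigenvector via the approximate unitary equivalence $Q \sim_a 0 \oplus Q$ for the compact remainder. The paper simply asserts that equivalence, whereas you justify it with a Voiculescu argument (which, for a compact tuple on an infinite-dimensional space, can be shortened: $C^*(I, M_1,\ldots,M_d) = \bC I + \cA_0$ with $\cA_0$ contained in the compacts, so the quotient character sending each $M_i$ to $0$ exists automatically); this is a welcome filling-in of detail rather than a different approach.
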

\begin{proof}
We may assume that $H$ is infinite-dimensional. From Corollary \ref{cor:good_summands}, a minimal summand of $T$ exists if $K$ is a polyhedron with $0 \not\in \text{ext}(K)$, or if $\text{ext}(K)$ is an infinite sequence tending to $0$. The only remaining case is that $K$ is a polyhedron which has $0$ as a vertex. In this case, $T$ decomposes as $\lambda_1 \oplus \cdots \lambda_n \oplus Q$, where the $\lambda_i$ are the nonzero vertices of $K$ and $Q$ is some infinite-dimensional compact tuple (which may or may not have $0$ as a joint eigenvalue). Since $Q$ is approximately unitarily equivalent to $0 \oplus Q$, an operator $T^\prime$ approximately unitarily equivalent to $T$ admits a finite-dimensional normal summand $S$ with joint eigenvectors at every vertex of $K$. This summand $S$ is therefore minimal.
\end{proof}

We now consider operator tuples which are not necessarily compact. A simple spectral theorem argument will show that minimal \textit{normal} tuples for matrix range $\Wmin{}(K)$ exist if and only if $K$ satisfies a geometric condition on its extreme points: the isolated extreme points of $K$ are dense in $\text{ext}(K)$. The condition also implies uniqueness of minimal normal tuples with $\mathcal{W}(T) = \Wmin{}(K)$. However, the same exact condition also guarantees there are many \textit{non-normal} minimal tuples with the same matrix range. 

Given a compact convex set $K \subseteq \bC^d$, let
\bes
\mathcal{I}_K = \{x \in \text{ext}(K): \exists V \subset \bC^d \textrm{ open  such that } V \cap \text{ext}(K) = \{x\}\}
\ees
be the set of isolated extreme points, and note that we may replace either instance of $\text{ext}(K)$ (or both instances) with $\overline{\text{ext}(K)}$ without changing the result. Further, if $r > 0$ is fixed, then there are only finitely many $x \in \text{ext}(K)$ which satisfy $||x - y|| \geq r$ for all $y \in \text{ext}(K) \setminus \{x\}$, so $\mathcal{I}_k$ is a finite or countably infinite set. Moreover, a point $x \in K$ is in $\mathcal{I}_K$ if and only if the closed convex hull of $\overline{\textrm{ext}(K)} \setminus \{x\}$ is a proper subset of $K$.

\begin{lemma}\label{lem:min_extreme_spectrum}
Let $N \in \cB(H)^d$ be a normal tuple which is minimal for its matrix range $\mathcal{W}(N) = \Wmin{}(K)$. Then $\sigma(N) = \ol{\text{ext}(K)}$.
\end{lemma}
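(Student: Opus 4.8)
The plan is to read off the shape of $\sigma(N)$ from the spectral theorem, using Proposition~\ref{prop:minWrange} to translate between matrix ranges and convex hulls of spectra, and then invoking minimality of $N$ to rule out any spectral mass outside $\overline{\text{ext}(K)}$.

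First I would establish the two inclusions that do not yet use minimality. Since $N$ is normal, Proposition~\ref{prop:minWrange} gives $\mathcal{W}(N) = \Wmin{}(L)$ with $L = \text{conv}(\sigma(N))$, which is compact because $\sigma(N) \subseteq \bC^d \cong \bR^{2d}$ is compact. Comparing the first levels of $\mathcal{W}(N) = \Wmin{}(K)$ and $\Wmin{}(L)$ forces $L = K$, i.e.\ $\text{conv}(\sigma(N)) = K$. By Milman's partial converse to Krein--Milman, the extreme points of $K = \overline{\text{conv}}(\sigma(N))$ lie in the closed set $\sigma(N)$, so $\overline{\text{ext}(K)} \subseteq \sigma(N)$; note also that $\overline{\text{ext}(K)} \neq \emptyset$ since $K$ is a nonempty compact convex set. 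It remains to prove $\sigma(N) \subseteq \overline{\text{ext}(K)}$.

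For this I would argue by contradiction. Suppose $\lambda \in \sigma(N) \setminus \overline{\text{ext}(K)}$, and let $E$ denote the joint spectral measure of $N$, so $\sigma(N) = \text{supp}(E)$. The set $U := \bC^d \setminus \overline{\text{ext}(K)}$ is open and contains $\lambda$, hence $E(U) \neq 0$; writing $U$ as a countable union of open balls whose closures are contained in $U$, at least one such ball $B$ has $E(B) \neq 0$. Put $P := I - E(B)$ and $\widetilde{H} := PH$. Then $P$ commutes with each $N_i$, so $\widetilde{H}$ reduces $N$; moreover $P \neq I$ because $E(B) \neq 0$, and $P \neq 0$, for otherwise $\sigma(N) \subseteq \overline{B} \subseteq U$, contradicting $\emptyset \neq \overline{\text{ext}(K)} \subseteq \sigma(N)$. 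Thus $\widetilde{H}$ is a nonzero proper reducing subspace, and $M := N|_{\widetilde{H}}$ is a normal tuple whose joint spectral measure is $F \mapsto E(F \setminus B)$; in particular $\sigma(M) \subseteq \sigma(N) \setminus B \subseteq K$. Conversely, every $x \in \overline{\text{ext}(K)}$ lies in $\sigma(N)$ and outside $\overline{B}$, so any sufficiently small neighborhood $U'$ of $x$ has $E(U' \setminus B) = E(U') \neq 0$, giving $x \in \sigma(M)$. Hence $\overline{\text{ext}(K)} \subseteq \sigma(M) \subseteq K$, so $K = \overline{\text{conv}}(\overline{\text{ext}(K)}) \subseteq \text{conv}(\sigma(M)) \subseteq K$ and $\text{conv}(\sigma(M)) = K$. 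By Proposition~\ref{prop:minWrange} once more, $\mathcal{W}(M) = \Wmin{}(K) = \mathcal{W}(N)$, so the restriction of $N$ to the proper reducing subspace $\widetilde{H}$ does not have strictly smaller matrix range, contradicting minimality. Therefore $\sigma(N) = \overline{\text{ext}(K)}$.

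The genuine content is the contradiction step, and the rest is bookkeeping. The main (mild) obstacle is purely measure-theoretic: extracting a ball $B$ with $E(B) \neq 0$ from $E(U) \neq 0$, and checking that passing to the reducing subspace $(I - E(B))H$ deletes $\lambda$ from the spectrum while retaining all of $\overline{\text{ext}(K)}$, so that the convex hull of the spectrum --- and hence the matrix range, via Proposition~\ref{prop:minWrange} --- is left unchanged.
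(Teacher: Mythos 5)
Your proposal is correct and follows essentially the same route as the paper: first deduce $\operatorname{conv}(\sigma(N)) = K$ from Proposition~\ref{prop:minWrange}, get $\ol{\text{ext}(K)} \subseteq \sigma(N)$ (you via Milman's converse, the paper via a direct extreme-point argument), and then, for the reverse inclusion, cut out a nonzero spectral projection supported away from $\ol{\text{ext}(K)}$ to produce a proper reducing subspace whose restriction still has spectrum with convex hull $K$, contradicting minimality. The only difference is bookkeeping: the paper removes $E(V \cap \sigma(N))$ for an open $V$ disjoint from $\ol{\text{ext}(K)}$, while you extract a ball $B$ with $\overline{B}$ in the complement and $E(B) \neq 0$; both are valid.
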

\begin{proof}
Since $\mathcal{W}(N) = \Wmin{}(K)$, we have that $\text{conv}(\sigma(N)) = K$ by Proposition \ref{prop:minWrange}. Note that a closure of the convex hull is not needed.

First, suppose that $\ol{\text{ext}(K)} \not\subseteq \sigma(N)$. Since $\sigma(N)$ is closed, it follows that there is an extreme point $x$ of $K$ which is missing from $\sigma(N)$. From $\textrm{conv}(\sigma(N)) = K$ we find that $x$ is nontrivial convex combination of points from $\sigma(N) \subseteq K$, a contradiction.

Next, assume that $\ol{\text{ext}(K)}$ is properly contained in $\sigma(N)$. Let $V$ be an open set in $\bC^d$ such that $V \cap \ol{\text{ext}(K)} = \varnothing$ and there exists $y \in \sigma(N) \cap V$. Letting $(\pi_1, \ldots, \pi_d)$ denote the coordinate functions on $\sigma(N)$, there is a spectral measure $E$ defined on Borel subsets of $\sigma(N)$ with
\bes
N_i = \int_{\sigma(N)} \pi_i \,\,dE,
\ees
and the support of $E$ is exactly $\sigma(N)$. Since $V$ is open and intersects $\sigma(N)$, it follows that $E(V \cap \sigma(N))$ is a nonzero projection, and the set of Borel functions which vanish on $V \cap \sigma(N)$ is a proper, nonzero reducing subspace of each $N_i$. The restriction to the complement, i.e.
\bes
L_i = \int_{\sigma(N)} \pi_i \cdot I_{\sigma(N) \setminus V} \,\,dE,
\ees
has joint spectrum which includes all of $\ol{\text{ext}(K)}$, since $V \cap \overline{\text{ext}(K)} = \varnothing$. It follows that $\text{conv}(\sigma(L)) = K$ and $\mathcal{W}(L) = \Wmin{}(K)$, a contradiction of the minimality of $N$.
\end{proof}

\begin{theorem}\label{thm:easy_spec_thm}
Let $K$ be a nonempty compact convex subset of $\bC^d$. Then there is a normal, minimal tuple $N$ with $\mathcal{W}(N) = \Wmin{}(K)$ if and only if $\ol{\mathcal{I}_K} = \ol{\text{ext}(K)}$. Moreover, in this case, the only such $N$ is diagonal with eigenvalues at each $\lambda \in \mathcal{I}_K$.
\end{theorem}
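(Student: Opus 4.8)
The plan is to treat the two implications separately; the ``$N$ is diagonal with eigenvalues at $\mathcal{I}_K$'' assertion will drop out of the forward implication. Throughout I will use three finite‑dimensional/standard facts: since $K\subseteq\bR^{2d}$, Minkowski's theorem gives $K=\operatorname{conv}(\ol{\text{ext}(K)})$ with no closure needed on the convex hull; a point is isolated in $\ol{\text{ext}(K)}$ iff it lies in $\mathcal{I}_K$; and for a commuting normal tuple $N$ one has $C^*(N)\cong C(\sigma(N))$ via the joint functional calculus. I also use monotonicity of $\Wmin{}$ together with $\Wmin{1}(L)=L$, so that $L_1\subsetneq L_2$ implies $\Wmin{}(L_1)\subsetneq\Wmin{}(L_2)$.

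For the \emph{if} direction, assume $\ol{\mathcal{I}_K}=\ol{\text{ext}(K)}$ and set $N:=\bigoplus_{\lambda\in\mathcal{I}_K}\lambda$, a diagonal (normal) tuple on $\ell^2(\mathcal{I}_K)$, which is nonzero because $K\neq\varnothing$. Then $\sigma(N)=\ol{\mathcal{I}_K}=\ol{\text{ext}(K)}$, so $\operatorname{conv}(\sigma(N))=K$ and $\mathcal{W}(N)=\Wmin{}(K)$ by Proposition \ref{prop:minWrange}. For minimality, note each $\lambda\in\mathcal{I}_K$ is isolated in $\sigma(N)$, so $\chi_{\{\lambda\}}\in C(\sigma(N))$ and the corresponding coordinate projection lies in $C^*(N)$; hence every reducing subspace of $N$ is a coordinate subspace $\ell^2(A)$ for some $A\subseteq\mathcal{I}_K$. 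If $A$ is proper, pick $\mu\in\mathcal{I}_K\setminus A$; isolatedness of $\mu$ in $\ol{\text{ext}(K)}$ forces $\mu\notin\ol A$, and then extremality of $\mu$ in $K$ forces $\mu\notin\operatorname{conv}(\ol A)$, so $\operatorname{conv}(\sigma(N|_{\ell^2(A)}))=\operatorname{conv}(\ol A)\subsetneq K$ and the restriction has strictly smaller matrix range.

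For the \emph{only if} direction (and uniqueness), let $N$ be normal and minimal with $\mathcal{W}(N)=\Wmin{}(K)$; Lemma \ref{lem:min_extreme_spectrum} gives $\sigma(N)=\ol{\text{ext}(K)}$. Let $E$ be its spectral measure. A short minimality argument shows $E$ has a rank‑one atom at each point of $\mathcal{I}_K$ and no other atoms: deleting the spectral subspace over a non‑isolated atom, or deleting a one‑dimensional piece of a rank‑$\ge 2$ atom, produces a proper reducing subspace whose spectrum has the same closure $\sigma(N)$, hence the same matrix range $\Wmin{}(K)$. Thus $E=E_a+E_c$, with $E_a$ the rank‑one atomic part carried by $\mathcal{I}_K$ and $E_c$ continuous. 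The crux is to show $E_c=0$. Suppose not; let $F:=\operatorname{supp}(E_c)$, a nonempty closed subset of $\sigma(N)\setminus\mathcal{I}_K$ with no isolated points, and let $\mu:=\langle E_c(\cdot)v,v\rangle$ for a unit vector $v$ in the range of $E_c$, a nonzero finite non‑atomic Borel measure on $F$. A standard ``fat Cantor set'' construction — remove from $F$, inside each member $U_n$ of a countable base of $F$, a nonempty relatively open ball of $\mu$‑measure $<2^{-n-1}\mu(F)$, and let $B$ be what remains — produces a closed $B\subseteq F$ with $\mu(B)>0$ and empty interior in $F$, hence empty interior in $\sigma(N)$. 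Then $H':=E(\sigma(N)\setminus B)H$ is a proper reducing subspace (as $E(B)=E_c(B)\ne 0$), and the restricted spectral measure $X\mapsto E_a(X)+E_c(X\setminus B)$ still has full support: $E_c(\,\cdot\,\setminus B)$ charges every nonempty relatively open subset of $F$ because $B$ is nowhere dense in $F$, so the support contains $\ol{\mathcal{I}_K}\cup F=\operatorname{supp}(E)=\sigma(N)$. Hence $\sigma(N|_{H'})=\ol{\text{ext}(K)}$ and $\mathcal{W}(N|_{H'})=\Wmin{}(K)$, contradicting minimality. Therefore $E=E_a$, so $N\cong\bigoplus_{\lambda\in\mathcal{I}_K}\lambda$ and $\sigma(N)=\ol{\mathcal{I}_K}$; comparing with $\sigma(N)=\ol{\text{ext}(K)}$ gives $\ol{\mathcal{I}_K}=\ol{\text{ext}(K)}$ and pins down $N$.

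The main obstacle is precisely the step $E_c=0$: one must turn ``$N$ has a continuous spectral part'' into a genuine proper reducing subspace with unchanged matrix range, and the subtlety is that simply discarding $H_c$ need not preserve the matrix range when $\ol{\mathcal{I}_K}\subsetneq\ol{\text{ext}(K)}$ (the atomic part alone may then have a smaller spectrum). The fat‑Cantor‑set trick circumvents this by removing only a topologically negligible — but $E_c$‑positive — slice, so that the closure of the remaining spectrum, and hence the convex hull, is untouched. Everything else (the finite‑dimensional Minkowski/Carathéodory facts, identifying the reducing subspaces of a multiplicity‑free diagonal tuple, and the bookkeeping with supports of restricted spectral measures) is routine.
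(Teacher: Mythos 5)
Your proof is correct and follows essentially the same route as the paper: both rest on Lemma \ref{lem:min_extreme_spectrum}, on the observation that minimality forces the spectral measure to have (rank-one) atoms exactly at the points of $\mathcal{I}_K$, and on excising a closed, nowhere-dense set of positive spectral measure to contradict minimality whenever a non-atomic part survives. The paper's Case (b) projection $E\left(O^c \cup \bigcup_n V_n\right)$, obtained by covering a dense sequence with neighborhoods of small measure, is precisely the complement of your fat Cantor set, so the differences are only organizational (you run the argument globally via the decomposition $E = E_a + E_c$ and read off uniqueness from the resulting structure of $E$, rather than localizing to an open set $O$ disjoint from $\ol{\mathcal{I}_K}$ and treating uniqueness separately).
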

\begin{proof}

\textbf{Step I}. Assume $\ol{\mathcal{I}_K} = \ol{\text{ext}(K)}$. Let $D$ be diagonal with joint eigenvalues from the countable set $\mathcal{I}_K$, so $\sigma(D) = \ol{\mathcal{I}_K} = \ol{\text{ext}(K)}$ and $\mathcal{W}(D) = \Wmin{}(K)$. Any projection that commutes with $D$ corresponds to a direct sum of eigenspaces for $D$, so if $\widetilde{H}$ is a proper reducing subspace for $D$, then the restriction $R$ misses an eigenvalue $x \in \mathcal{I}_K$. Since $x$ is isolated, the convex hull of $\sigma(R)$ does not equal $K$, and therefore $\mathcal{W}(R) \not= \Wmin{}(K)$. We conclude that $D$ is minimal, and in particular a normal minimal tuple for matrix range $\Wmin{}(K)$ exists.
If $N$ is any other minimal normal tuple for the same matrix range, then by Lemma \ref{lem:min_extreme_spectrum}, $\sigma(N) = \overline{\text{ext}(K)}$, so every point $x \in \mathcal{I}_K$ is an atom of the spectral measure. It follows that $N$ has eigenvalues at each $x \in \mathcal{I}_K$, and $D$ is a summand of $N$ with the same matrix range. By minimality, $N$ must equal $D$.

\textbf{Step II}. Suppose there exists a minimal normal $N$, but $\ol{\mathcal{I}_K}$ is a proper subset of $\ol{\text{ext}(K)}$. We know from Lemma \ref{lem:min_extreme_spectrum} that $\sigma(N) = \ol{\text{ext}(K)}$. Let $O$ be an open set in $\bC^d$ such that $O \cap \text{ext}(K) \not= \varnothing$ and $O \cap \ol{\mathcal{I}_K} = \varnothing$, and let $E$ be the spectral measure that represents $N$. 

\textbf{Case (a)}. Suppose there is a point $x \in O \cap \text{ext}(K)$ with $E(\{x\}) \not= 0$. Since $x$ is not an isolated extreme point, it follows that $E(\ol{\text{ext}(K)} \setminus \{x\})$ is a projection onto a proper reducing subspace of $N$ whose restriction has the same spectrum, a contradiction of minimality.

\textbf{Case (b)}. Suppose that for every $x \in O \cap \text{ext}(K)$, $E(\{x\}) = 0$. Since the support of $E$ is $\sigma(N) = \ol{\text{ext}(K)}$, and $O$ is an open set which includes some extreme points, it follows that $E(O)$ must be a nontrivial projection. Let $z$ be a unit vector in its range, so $\langle E(O)z, z \rangle = 1$. Also let $x_n \in O \cap \sigma(N)$ form a sequence which is dense in $\ol{O} \cap \sigma(N)$. Because $E(\{x_n\}) = 0$, there is an open neighborhood $x_n \in V_n \subset O$ such that $\langle E(V_{n}) z, z \rangle \leq \frac{1}{2^{n+1}}$ holds, and
\[
\left \langle E \left( O^c \cup \bigcup_{n=1}^\infty V_n \right) z, z \right \rangle = \left\langle E \left( \bigcup_{n=1}^\infty V_n \right) z, z \right \rangle \leq \sum_{n=1}^\infty \langle E(V_n) z, z \rangle \leq \sum_{n=1}^\infty \frac{1}{2^{n+1}} = \frac{1}{2}.
\]
Therefore, $P := E \left( O^c \cup \bigcup\limits_{n=1}^\infty V_n \right)$ is a proper nonzero projection. The spectrum of $N|_{\text{Ran}(P)}$ contains at least $\{x_1, x_2, \ldots \} \cup (\sigma(N) \setminus O)$ by design, and the closure of the union is $\sigma(N)$. This contradicts the minimality of $N$.
\end{proof}

The condition $\overline{\mathcal{I}_K} = \overline{\text{ext}(K)}$ also allows us to expand Corollary \ref{cor:uniqueness_issues} to the non-compact setting (without the need to assume $K$ is a polyhedron). That is, the condition which characterizes existence and uniqueness of minimal normal tuples also guarantees the existence of a plethora of non-normal minimal tuples for the same matrix range $\Wmin{}(K)$. 

\begin{corollary}\label{cor:non-compact_non-unique}
Let $K$ be a compact convex set with at least three extreme points, such that $\overline{\mathcal{I}_K} = \overline{\text{ext}(K)}$. Then there are uncountably many unitarily inequivalent tuples $T$ such that $T$ is minimal for matrix range $\mathcal{W}(T) = \Wmin{}(K)$. For such $T$, the Shilov ideal of $S_T$ in $C^*(S_T)$ is trivial if and only if $T$ is normal.
\end{corollary}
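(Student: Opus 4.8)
The plan is to imitate the passage from Example \ref{exam:simplex_surprise} to Corollary \ref{cor:uniqueness_issues}, but to glue the irreducible compact pair of that example onto a possibly infinite, possibly non-compact diagonal tuple, using Theorem \ref{thm:easy_spec_thm} in place of the compactness that is no longer available. First I would reduce to the self-adjoint picture, so $K\subseteq\bR^d$. Since $\ol{\mathcal{I}_K}=\ol{\text{ext}(K)}$ and $K$ has at least three extreme points, $\mathcal{I}_K$ contains three distinct (hence affinely independent) isolated extreme points, so after an invertible affine transformation of $\bR^d$ --- which by (\ref{eq:affine}) and (\ref{eq:affineminmax}) preserves the hypothesis, and which changes neither minimality nor unitary equivalence since it does not alter the von Neumann algebra generated by a tuple nor the restriction operation --- I may assume $0,e_1,e_2\in\mathcal{I}_K$. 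Write $\Delta:=\text{conv}(0,e_1,e_2)\subseteq K$, a $2$-simplex whose only extreme points in common with $K$ are $0,e_1,e_2$. Let $N$ be the diagonal normal $d$-tuple with one eigenvalue at each point of $\mathcal{I}_K\setminus\{0\}$, acting on $H_N$ with eigenbasis $\{v_\lambda\}$; then $\sigma(N)=\ol{\mathcal{I}_K\setminus\{0\}}$ does not contain $0$, since $0$ is isolated in $\ol{\mathcal{I}_K}$. For $p\in[1,\infty)$ let $(S_1^{(p)},S_2^{(p)})$ be the compact self-adjoint pair of Example \ref{exam:simplex_surprise} with $\frac1{3n}$ replaced by $\frac1{3n^p}$, on a separable infinite-dimensional $H_S$; set $S^{(p)}=(S_1^{(p)},S_2^{(p)},0,\dots,0)\in\cK(H_S)^d_{sa}$ and $T^{(p)}=N\oplus S^{(p)}$. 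From the example I would record: $(S_1^{(p)},S_2^{(p)})$ is irreducible and has no joint eigenvector; $S_1^{(p)}+S_2^{(p)}$ is compact, positive, and acts on an infinite-dimensional space, so $0\in\mathcal{W}_1(S^{(p)})$; $\|S_j^{(p)}\|\le\frac13$ and $S_1^{(p)}+S_2^{(p)}\le\frac23 I$, so $\mathcal{W}_1(S^{(p)})\subseteq\Delta$ but $e_1,e_2\notin\mathcal{W}_1(S^{(p)})$; and, since simplices satisfy $\Wmax{}=\Wmin{}$ by (\ref{eq:simplex2tod-1}), $S^{(p)}\in\Wmax{}(\Delta)=\Wmin{}(\Delta)$ admits a normal dilation $M^{(p)}$ with $\sigma(M^{(p)})\subseteq\Delta$.

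That $\mathcal{W}(T^{(p)})=\Wmin{}(K)$ I would check in two steps. For $\subseteq$, the tuple $N\oplus M^{(p)}$ is a normal dilation of $T^{(p)}$ with joint spectrum $\sigma(N)\cup\sigma(M^{(p)})\subseteq K$, so by Proposition \ref{prop:minWrange}, $\mathcal{W}(T^{(p)})\subseteq\mathcal{W}(N\oplus M^{(p)})=\Wmin{}(\text{conv}(\sigma(N\oplus M^{(p)})))\subseteq\Wmin{}(K)$. For $\supseteq$, the first level $\mathcal{W}_1(T^{(p)})$ is the closed convex hull of $\mathcal{W}_1(N)\cup\mathcal{W}_1(S^{(p)})$, which contains $\sigma(N)\cup\{0\}\supseteq\mathcal{I}_K$, and $\ol{\text{conv}}(\mathcal{I}_K)=\ol{\text{conv}}(\ol{\text{ext}(K)})=K$ by Krein--Milman; since $\Wmin{}(K)$ is the smallest matrix convex set whose first level contains $K$, this gives $\mathcal{W}(T^{(p)})\supseteq\Wmin{}(K)$.

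The crux, and the step I expect to require the most care, is minimality of $T^{(p)}$: since $N$ and hence $T^{(p)}$ need not be compact, Theorem \ref{thm:compact_eigenvector_hunt} does not apply and the joint eigenvectors at the extreme points of $K$ must be produced by a direct spectral argument. Let $\widetilde H$ reduce $T^{(p)}$ with $\mathcal{W}(T^{(p)}|_{\widetilde H})=\Wmin{}(K)$. For $\lambda\in\mathcal{I}_K\setminus\{0\}$ the operator $B_\lambda:=E_{\{0\}}\big(\sum_{j=1}^d (T^{(p)}_j-\lambda_j)^2\big)$ --- the spectral projection at $0$ of a self-adjoint polynomial in $T^{(p)}$ --- lies in the von Neumann algebra generated by $T^{(p)}$; on $H_N$ it is the rank-one projection onto $v_\lambda$, and on $H_S$ it is $0$ because $\sum_j(S^{(p)}_j-\lambda_j)^2\ge \dist(\lambda,\mathcal{W}_1(S^{(p)}))^2\,I$ with $\lambda\notin\mathcal{W}_1(S^{(p)})$ (a nonzero extreme point of $K$ lies in $\Delta\supseteq\mathcal{W}_1(S^{(p)})$ only if it equals $e_1$ or $e_2$, and those are not in $\mathcal{W}_1(S^{(p)})$). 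Hence $B_\lambda$ is the rank-one projection onto $v_\lambda$, so $P_{H_N}=\sum_{\lambda}B_\lambda$ also lies in that von Neumann algebra, whence $\widetilde H=(\widetilde H\cap H_N)\oplus(\widetilde H\cap H_S)$; write $\widetilde H\cap H_N=\ol{\spn}\{v_\lambda:\lambda\in J\}$ with $J\subseteq\mathcal{I}_K\setminus\{0\}$, and $L:=\widetilde H\cap H_S$, a reducing subspace for $(S_1^{(p)},S_2^{(p)})$. From $K=\mathcal{W}_1(T^{(p)}|_{\widetilde H})=\ol{\text{conv}}(\ol J\cup\mathcal{W}_1(S^{(p)}|_L))$ and Milman's theorem, every extreme point of $K$ lies in $\ol J\cup\mathcal{W}_1(S^{(p)}|_L)$. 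Each $\lambda\in\mathcal{I}_K\setminus\{0\}$ is not in $\mathcal{W}_1(S^{(p)}|_L)\subseteq\mathcal{W}_1(S^{(p)})$, hence $\lambda\in\ol J$, and being isolated in $\ol{\mathcal{I}_K}\supseteq\ol J$ it lies in $J$; so $J=\mathcal{I}_K\setminus\{0\}$ and $\widetilde H\supseteq H_N$. Also $0\notin\ol J$, so $0\in\mathcal{W}_1(S^{(p)}|_L)$, forcing $L\neq\{0\}$ and thus $L=H_S$ by irreducibility. Therefore $\widetilde H=H_N\oplus H_S$ and $T^{(p)}$ is minimal.

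Because $(S_1^{(p)},S_2^{(p)})$ has no joint eigenvector, $H_N$ is exactly the closed span of the joint eigenvectors of $T^{(p)}$, so any unitary $U$ with $UT^{(p)}U^*=T^{(q)}$ must carry $H_N$ onto $H_N$, hence $H_S$ onto $H_S$, conjugating $(S_1^{(p)},S_2^{(p)})$ to $(S_1^{(q)},S_2^{(q)})$; but $S_1^{(p)}$ and $S_1^{(q)}$ have distinct eigenvalue multisets for $p\neq q$, so the tuples $T^{(p)}$, $p\in[1,\infty)$, are pairwise unitarily inequivalent --- uncountably many. Finally, let $T$ be any minimal tuple with $\mathcal{W}(T)=\Wmin{}(K)$. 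By Example \ref{ex:naturalenvelope}, $C^*_e(S_T)\cong C(\ol{\text{ext}(K)})$ is commutative; so if the Shilov ideal of $S_T$ in $C^*(S_T)$ is trivial, then $C^*(S_T)\cong C^*_e(S_T)$ is commutative, forcing the coordinates of $T$ to commute and be normal, i.e.\ $T$ is normal. Conversely, if $T$ is normal then Theorem \ref{thm:easy_spec_thm} makes $T$ diagonal with eigenvalues at $\mathcal{I}_K$, so $C^*(S_T)=C(\sigma(T))=C(\ol{\mathcal{I}_K})=C(\ol{\text{ext}(K)})\cong C^*_e(S_T)$, whence the quotient map $C^*(S_T)\to C^*_e(S_T)$ is an isomorphism and the Shilov ideal is $\{0\}$.
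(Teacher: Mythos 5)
Your proposal is correct and follows essentially the same route as the paper: the same gluing of the irreducible compact pair from Example \ref{exam:simplex_surprise} (with the scaling parameter $p$) onto a diagonal tuple supported on $\mathcal{I}_K\setminus\{0\}$, the same appeal to Example \ref{ex:naturalenvelope} for the $C^*$-envelope, and the same use of Theorem \ref{thm:easy_spec_thm} in the normal case; your spectral-projection verification of minimality carefully fills in a step the paper only asserts. The one repair needed is at the very end: from $C^*(S_T)\cong C^*_e(S_T)$ as abstract $C^*$-algebras you cannot conclude that the quotient map is injective, since a $C^*$-algebra can be isomorphic to a proper quotient of itself. Instead argue as in Example \ref{ex:naturalenvelope}: after identifying $C^*(S_T)$ with $C(\overline{\text{ext}(K)})$ and the $T_j$ with the coordinate functions, any nontrivial ideal corresponds to restriction to a proper closed subset of $\overline{\text{ext}(K)}$, which strictly shrinks the joint spectrum and hence the first level of the matrix range, so the quotient is not completely isometric on $S_T$ and the Shilov ideal must be trivial.
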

\begin{proof}
Let $\mathcal{I}_K = \{v_1, v_2, v_3, \ldots \}$. Following an affine transformation of Corollary \ref{cor:uniqueness_issues}, choose $T$ to be the direct sum of $v_2, v_3, \ldots$ and an irreducible tuple $S$. The summand $S$ is chosen such that $\mathcal{W}_1(S)$ is contained in the simplex $\text{conv}[v_1, v_2, v_3]$ and $v_1 \in \mathcal{W}_1(S)$, but $\mathcal{W}_1(S)$ does not include any extreme points of $K$ besides $v_1$. The uncountably many options for $T$ are all minimal for matrix range $\mathcal{W}(T) = \Wmin{}(K)$.

Next, let $T$ be any minimal tuple for matrix range $\mathcal{W}(T) = \Wmin{}(K)$, and let $N$ be a diagonal operator with eigenvalues at $\mathcal{I}_K$, which is dense in $\ol{\text{ext}(K)}$. The $C^*$-envelope of $S_T$ is isomorphic to $C(\ol{\text{ext}(K)})$, and in particular is commutative. Therefore, if $T$ is not normal, the Shilov ideal of $S_T$ in $C^*(S_T)$ is nontrivial. However, if $T$ is normal, then by Theorem \ref{thm:easy_spec_thm}, $T$ is unitarily equivalent to $N$. Finally, we verify that the Shilov ideal of $N$ in $S_N$ is trivial, which follows from the fact that $C^*(S_N) \cong C(\ol{\text{ext}(K)})$. In particular, any quotient of $C^*(S_N)$ by a nontrivial ideal $I$ gives rise to a normal tuple $(M_1, \ldots, M_d) = (N_1 + I, \ldots, N_d + I)$ whose joint spectrum $\sigma(M)$ is strictly smaller than $\sigma(N) = \ol{\text{ext}(K)}$. It follows that $W_1(M) = \text{conv}(\sigma(M))$ is a proper subset of $W_1(N) = K$, and the unital map sending $N_i \mapsto M_i$ is certainly not completely isometric. That is, the nontrivial ideal $I$ cannot be the Shilov ideal.
\end{proof}

We also produce another pathological example using the simplex. Consider the universal $C^*$-algebra
\be\label{eq:universal_AC_sec3}
\mathcal{A}_d := C^*(x_1, \ldots, x_d \hspace{4 pt} | \hspace{4 pt} x_i = x_i^*, \hspace{4 pt} x_i^2 = 1, \hspace{4 pt} x_i x_j = -x_j x_i \text{ for } i \not= j),
\ee
which has played a major role, under various guises, in previous problems concerning matrix convex sets and free spectrahedra \cite{originalhelton, DDSS, PSS}. The generators may be realized as a tuple $F^{[d]} = (F_1^{[d]}, \ldots F_d^{[d]})$ of $2^{d-1} \times 2^{d-1}$ matrices defined by $F_1^{[1]} = 1$ and the following recursive identities for $d \geq 2$:
\bes
F_j^{[d]} := F_j^{[d-1]} \otimes \begin{pmatrix} 1 & 0 \\ 0 & -1 \end{pmatrix}, 1 \leq j \leq d - 1,
\ees
\bes
F_{d}^{[d]} := I_{2^{d-2}} \otimes \begin{pmatrix} 0 & 1 \\ 1 & 0 \end{pmatrix}.
\ees

Anticommutation of the $F_i^{[d]}$ implies that for any real $d$-tuple $\lambda$ with $||\lambda||_{\ell^2} = 1$, it follows that $||\lambda_1 F^{[d]}_1 + \ldots + \lambda_d F^{[d]}_d|| \leq 1$. In particular, $\mathcal{W}_1(F^{[d]})$ is contained in the closed $\ell^2$ unit ball $\eucball{d}$, so $F^{[d]} \in \Wmax{}(\eucball{d})$. Elementary computations show that the unit sphere $\mathbb{S}^{d-1}$ is contained in $\mathcal{W}_1(F^{[d]})$, which then implies that $\mathcal{W}_1(F^{[d]}) = \eucball{d}$.

\begin{theorem}\label{thm:body_ball_covering}
Let $K$ be a compact convex set with at least 3 extreme points. Then there is a tuple $A \in \cB(H)^d$ such that $\mathcal{W}(A) = \Wmin{}(K)$, $A$ has no nontrivial normal summands, and any summand $B$ of $A$ such that $\mathcal{W}(B) = \mathcal{W}(A)$ has the property that $B$ is not minimal for its matrix range.
\end{theorem}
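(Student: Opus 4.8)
The plan is to build the tuple $A$ as a direct sum $A = \bigoplus_{k=1}^\infty A^{(k)}$ indexed by the isolated extreme points of $K$ together with a countable dense family of boundary phenomena, where each summand $A^{(k)}$ is a scaled, affinely transformed copy of the universal anticommuting self-adjoint unitary tuple $F^{[d_k]}$ (or rather the operators $Z_j = \frac12(I - F^{[d_k]}_j)$ built from it) so that $\mathcal{W}_1(A^{(k)})$ is a small Euclidean ball of radius $r_k \to 0$ placed inside $K$. The key geometric idea, generalizing Corollary \ref{cor:uniqueness_issues} and Corollary \ref{cor:non-compact_non-unique}, is that since $K$ has at least three extreme points one can cover a dense subset of $\text{ext}(K)$ (indeed of $K$) by the first-level sets $\mathcal{W}_1(A^{(k)})$ of these ball-shaped summands while keeping $\bigcup_k \mathcal{W}_1(A^{(k)}) \subseteq K$; taking closed convex hulls then gives $\mathcal{W}_1(A) = K$, and since each summand already dilates into $\Wmin{}(K)$ (its matrix range is contained in $\Wmin{}$ of a ball contained in $K$) we get $\mathcal{W}(A) = \Wmin{}(K)$ by the direct-sum stability of matrix convex sets.

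First I would make the ball-covering precise: choose a countable set $\{y_k\}$ dense in $K$ with each $y_k$ an interior point of some ball $B(c_k, r_k) \subseteq K$, $r_k \to 0$, arranged so that no single $B(c_k,r_k)$ contains an extreme point of $K$ in its interior other than possibly passing through it on the boundary — this is where "at least three extreme points" is used, exactly as in the proof of Corollary \ref{cor:non-compact_non-unique}, to guarantee each ball fits inside $K$ while still allowing the union to be dense. Then set $A^{(k)}$ to be an affine image (using (\ref{eq:affine})) of $r_k F^{[d]}$ recentered at $c_k$, noting $\mathcal{W}_1(F^{[d]}) = \eucball{d}$, so $\mathcal{W}_1(A^{(k)}) = \ol{B(c_k,r_k)} \subseteq K$ and hence $\mathcal{W}(A^{(k)}) \subseteq \Wmin{}(K)$. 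Boundedness of the norms (all $\leq \sup_k(\|c_k\| + r_k) < \infty$) makes $A = \bigoplus_k A^{(k)} \in \cB(H)^d$ well-defined, and $\mathcal{W}(A) = \ol{\bigcup_k \mathcal{W}(A^{(k)})}^{\,\text{mc}}$, whose first level is $\ol{\text{conv}}(\bigcup_k \ol{B(c_k,r_k)}) = K$ by density; matrix convexity plus containment in $\Wmin{}(K)$ forces $\mathcal{W}(A) = \Wmin{}(K)$.

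Next I would verify the two structural properties. That $A$ has no nontrivial normal summand: any normal summand would correspond to a reducing subspace on which $A$ acts normally, hence (restricting) would give a normal summand of some $A^{(k)}$ or a "diagonal" cut across infinitely many of them; but each $A^{(k)}$, coming from the universal anticommuting tuple with $d_k \geq 2$, is irreducible and non-normal, and a joint eigenvector of $A$ would be a joint eigenvector of a single $A^{(k)}$ which it does not have (the $F^{[d]}_j$ for $d \geq 2$ have no common eigenvector) — so there is no normal summand. That any summand $B$ with $\mathcal{W}(B) = \mathcal{W}(A) = \Wmin{}(K)$ fails minimality: such a $B$ must, for its first level to recover $K$, "use" infinitely many of the $A^{(k)}$ (since each individual $\mathcal{W}_1(A^{(k)})$ is a proper subset of $K$ and only countably many are needed to be dense, dropping all but finitely many shrinks the closed convex hull below $K$); but then $B$ contains a subsequence of summands $A^{(k)}$ with radii $r_k \to 0$, and one can delete the summand of smallest index (or any one whose ball is redundant because the remaining balls are still dense) to get a proper reducing subspace whose restriction still has matrix range $\Wmin{}(K)$ — so $B$ is never minimal.

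The main obstacle I anticipate is the covering lemma in the middle paragraph: one must arrange a countable family of closed balls, with radii tending to zero, that are each contained in $K$ yet whose union is dense in $K$ (equivalently whose closed convex hull is $K$), and do so in a way compatible with the "at least three extreme points" hypothesis — for a set with only two extreme points ($K$ a segment) this is impossible since any ball in $K$ is a point, matching the necessity of the hypothesis. Handling the geometry near the boundary of $K$ (making sure the little balls stay inside, including near low-dimensional faces and at extreme points) is the delicate part; I expect to reduce to the convex-body case via (\ref{eq:affine})–(\ref{eq:proj}) as in the paper's standing conventions, and then use a straightforward shrinking argument (e.g. balls centered at points of a dense sequence in the interior, with radius a small fraction of the distance to $\partial K$), which makes density of the union automatic while keeping each ball inside $K$.
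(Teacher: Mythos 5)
Your overall architecture matches the paper's: $A$ is a direct sum of translated, scaled copies $A^{(k)} = x_k + c_k F^{[d]}$ whose first levels are small balls accumulating at every extreme point of $K$. But there is a genuine gap at the central step where you claim $\mathcal{W}(A^{(k)}) \subseteq \Wmin{}(K)$. Your justification is that $\mathcal{W}_1(A^{(k)})$ is a ball contained in $K$, and that the matrix range of $A^{(k)}$ is ``contained in $\Wmin{}$ of a ball contained in $K$.'' That last assertion is false: containment of the first level in a set $L$ only yields $\mathcal{W}(A^{(k)}) \subseteq \Wmax{}(L)$, and for the anticommuting tuple one has (for $d=2$, by (\ref{eq:ACrange=max2})) $\mathcal{W}(F^{[2]}) = \Wmax{}(\eucball{2})$, which by (\ref{eq:simplex2tod-1}) is \emph{strictly} larger than $\Wmin{}(\eucball{2})$ since a disk is not a simplex. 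So the balls being inside $K$ does not by itself give $\mathcal{W}(A) \subseteq \Wmin{}(K)$, and without that containment the equality $\mathcal{W}(A) = \Wmin{}(K)$ is unproved. The paper closes exactly this gap by requiring each ball $x_k + c_k\eucball{d}$ to sit inside a simplex $S_k \subseteq \operatorname{int}(K)$: then $\mathcal{W}(A^{(k)}) \subseteq \Wmax{}(S_k) = \Wmin{}(S_k) \subseteq \Wmin{}(K)$, the middle equality being the simplex criterion. (An alternative repair along your lines would be to shrink each ball so that its $\theta(\eucball{d})$-fold dilate about its center still lies in $K$, but you would have to say this; it does not come for free.)

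A second, lesser gap: both your ``no normal summand'' claim and your ``no minimal summand'' claim silently assume that every reducing subspace of $\bigoplus_k A^{(k)}$ decomposes as a direct sum of reducing subspaces of the individual $A^{(k)}$. This is not automatic for an infinite direct sum (if two summands were unitarily equivalent there would be diagonal reducing subspaces), and your construction does not arrange anything to prevent it. The paper makes the balls pairwise \emph{disjoint}, separates $\mathcal{W}_1(A^{(i)})$ from $\mathcal{W}_1(A^{(j)})$ by a hyperplane, forms the corresponding linear combination $T = \sum_m b_m A_m$, and uses the continuous functional calculus on $T$ to kill the off-diagonal blocks of any reducing projection. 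Some argument of this kind is needed before you may speak of a summand $B$ as ``a subsequence of the $A^{(k)}$'' or reduce the normal-summand question to the irreducibility and non-normality of $F^{[d]}$. Once both points are supplied, the rest of your outline (density of the balls forcing infinitely many summands, hence redundancy of any single one) is sound and agrees with the paper.
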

\begin{proof}
We may assume that $K$ is a convex body in $\bR^d$, $d \geq 2$, and construct a tuple of self-adjoint operators. For $k \in \bZ^+$, choose $x_k \in \bR^d$ and $c_k \in (0, \infty)$ such that the collection $\{x_1 + c_1 \eucball{d}, x_2 + c_2 \eucball{d}, \ldots \}$ of $\ell^2$-balls has the following properties. For each $k$, there exists a simplex $S_k$ with $x_k + c_k \eucball{d} \subset S_k \subseteq \text{int}(K)$, and for $i \not= j$, the balls $x_i + c_i \eucball{d}$ and $x_j + c_j \cdot \eucball{d}$ do not intersect. Finally, every $\lambda \in \text{ext}(K)$ is a limit point of the union of all the balls.

Let $A^{(k)}$ be the tuple $x_k + c_k \cdot (F^{[d]}_1, \ldots, F^{[d]}_d)$, so that $\mathcal{W}_1(A^{(k)}) = x_k + c_k \eucball{d}$, and let $A = \bigoplus\limits_{k=1}^\infty A^{(k)}$. Then $\mathcal{W}(A)$ is a closed matrix convex set whose first level has $\text{ext}(K) \subseteq \mathcal{W}_1(A)$, which shows that $K \subseteq \mathcal{W}_1(A)$ and $\Wmin{}(K) \subseteq \mathcal{W}(A)$. On the other hand, because $S_k$ is a simplex, we have that each $A^{(k)}$ admits a normal dilation with spectrum inside $S_k$. Therefore, $A$ admits a normal dilation with spectrum inside $K$, and $\mathcal{W}(A) \subseteq \Wmin{}(K)$ holds. Finally, $\mathcal{W}(A) = \Wmin{}(K)$.

Suppose $\cK$ is a nontrivial subspace of $\bigoplus\limits_{i=1}^\infty \bC^{2^{d-1}}$ which is reducing for $A$. Let $P$ denote the projection onto $\cK$ and write $P$ in block form $[P_{ij}]$ with $P_{ij} \in M_{2^{d-1}}$. Fix any $i \not= j$, so that $\mathcal{W}_1(A^{(i)})$ and $\mathcal{W}_1(A^{(j)})$ are disjoint compact convex sets, meaning there is a hyperplane which separates them. We may therefore fix constants $b_1, \ldots, b_d$ such that the self-adjoint operators
\bes
T^{(k)} := \sum_{m=1}^d b_m A^{(k)}_m
\ees 
have $\mathcal{W}_1(T^{(i)})$ and $\mathcal{W}_1(T^{(j)})$ disjoint, which gives that $T^{(i)}$ and $T^{(j)}$ have disjoint spectrum. Letting 
\bes
T = \sum_{m=1}^d b_m A_m,
\ees
we have that $\sigma(T) \subset \bR$ is a compact set which includes the spectrum of any summand $T^{(k)}$. Fix a continuous function $f$ on $\sigma(T)$, which we may apply using the functional calculus, such that $f(T^{(i)}) = 0$ and $f(T^{(j)})$ is invertible. Since $P$ commutes with $T$, we have that $P f(T) = f(T) P$, which in block form shows that $P_{ij} f(T^{(j)}) = f(T^{(i)}) P_{ij}$. By the choice of $f$, we have that $P_{ij} = 0$. Therefore, $P$ is a direct sum of projections $P_{ii} \in M_{2^{d-1}}$ corresponding to reducing subspaces of $A^{(i)}$. 

Since every reducing subspace $\cK$ of $A$ is a direct sum of reducing subspaces for the $A^{(i)}$, it follows that if $A$ has a nontrivial normal summand, then there is some $A^{(i)}$ which has a nontrivial normal summand as well. Since $(F^{[d]}_1, \ldots, F^{[d]}_d)$ has no nontrivial normal summand, this is impossible. Similarly, if $B$ is a summand of $A$ with the same matrix range $\Wmin{}(K)$, then $B$ is a direct sum of $A^{(i)}|_{\cK_i}$ for reducing subspaces $\cK_i$ of $A^{(i)}$. Since each set $\mathcal{W}_1(A^{(i)})$ is contained in the interior of $K$, it follows that $B$ must have infinitely many summands, with detection of the extreme points of $K$ in $\mathcal{W}_1(B)$ unaffected by the removal of one summand. Finally, $B$ is not minimal for its matrix range $\mathcal{W}(B) = \Wmin{}(K)$.
\end{proof}

%%%%%%%%%%%%%%%%%%%%%%%%%%%%%

\section{Scaled Containments}\label{sec:scaled_containments}

In this section, we consider two matrix convex set containments which may be demonstrated by explicit dilation procedures. We first consider the problem of dilating tuples $T \in \cB(H)^d$ of (not necessarily self-adjoint) contractions to normal tuples $N \in \cB(K)^d$ such that $||N_i|| \leq C$ for each $i$. Recall from (\ref{eq:IcantbelieveImissedthisonthefirsttimearound}) that
\be\label{eq:headdesk2}
\prod_{j=1}^d \Wmin{}(\overline{\bD}) = \{T \in \Md: \text{for each } i \in \{1, \ldots, d\}, ||T_i|| \leq 1\}
\ee
and that an abstract dilation result for contractions can be found in Corollary \ref{cor:contractionSDdilation}, in the language of SD-tuples. Below we show that the constant $C = \sqrt{2d}$ can be achieved explicitly.

\begin{lemma}\label{lem:doublesquaretodiamond}
Suppose $S \in \cB(H)^d$ satisfies
\bes
S_1 S_1^* + \ldots + S_d S_d^* = I = S_1^* S_1 + \ldots + S_d^* S_d.
\ees
Then there is a normal dilation $M$ of $S$ with $||M_i|| \leq \sqrt{2d}$ and $M_i M_j = 0$ for $i \not= j$.
\end{lemma}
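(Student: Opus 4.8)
\emph{Reformulating the target.} The plan is to construct the dilation directly. Note first that a commuting normal tuple $M=(M_1,\dots,M_d)$ with $M_iM_j=0$ for $i\ne j$ and $\|M_i\|\le\sqrt{2d}$ is exactly a normal tuple whose joint spectrum lies in the ``complex cross'' $X:=\bigcup_{i=1}^d\{z e_i:\ z\in\mathbb C,\ |z|\le\sqrt{2d}\}$, since $\sigma(M)\subseteq X$ forces each joint spectral point to have at most one nonzero coordinate of modulus $\le\sqrt{2d}$, and conversely (using Fuglede) $M_iM_j=0$ together with commuting normality pins $\sigma(M)$ into $X$. Thus it suffices to produce a normal dilation of $S$ with $\sigma(M)\subseteq X$; equivalently, since $\operatorname{conv}(X)=\sqrt{2d}\cdot\diamondsuit$ with $\diamondsuit=\{z\in\mathbb C^d:\sum_i|z_i|\le1\}$ the complex $\ell^1$-ball, it is enough to show $\mathcal W(S)\subseteq\Wmin{}(\sqrt{2d}\cdot\diamondsuit)$ and then invoke Proposition \ref{prop:introcara} (which delivers a dilation with spectrum in $\overline{\mathrm{ext}}(\sqrt{2d}\diamondsuit)=X$, keeping finite dimensions if $H$ is finite-dimensional).

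\emph{Setting up the construction.} Because $M_iM_j=0$ and each $M_i$ is normal, each $M_i$ must be supported on a subspace $\mathcal R_i$ of the dilation space, with the $\mathcal R_i$ mutually orthogonal. So I would build $\widehat H=\bigoplus_{i=1}^d\mathcal R_i$ (plus a harmless extra summand), embed $H\hookrightarrow\widehat H$ by an isometry $\iota=(\iota_1,\dots,\iota_d)$ with $\sum_i\iota_i^*\iota_i=I_H$, and put a normal operator $M_i$ of norm $\sqrt{2d}$ on $\mathcal R_i$ (zero elsewhere). Then $M_iM_j=0$ and $\|M_i\|\le\sqrt{2d}$ are automatic, and the only thing to arrange is $\iota_i^*M_i\iota_i=S_i$ for each $i$. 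A dimension/compression estimate ($S_i^*S_i=\iota_i^*M_i^*\iota_i\iota_i^*M_i\iota_i\le \iota_i^*M_i^*M_i\iota_i\le 2d\,\iota_i^*\iota_i$, and symmetrically with $S_iS_i^*$) shows $\iota_i^*\iota_i\ge\tfrac1{2d}(S_i^*S_i+S_iS_i^*)$, so that $\sum_i\iota_i^*\iota_i\ge\tfrac1{2d}\sum_i(S_i^*S_i+S_iS_i^*)$. Here is exactly where the balanced hypothesis enters: $\sum_i(S_i^*S_i+S_iS_i^*)=2I$, so taking $\iota_i^*\iota_i=\tfrac12(S_i^*S_i+S_iS_i^*)$ (e.g. $\iota_i h=\tfrac1{\sqrt2}(|S_i^*|h,|S_i|h)$ into $\mathcal R_i=H\oplus(\cdots)$) the weights sum to exactly $I$ with no waste — whereas the naive uniform choice $\iota_i^*\iota_i=\tfrac1d I$ only yields the weaker bound $d$. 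The remaining defect data needed to make $M_i$ normal of norm $\sqrt{2d}$ while recovering $S_i$ should be supplied by the other factorization $I-S_iS_i^*=\sum_{k\ne i}S_kS_k^*$ and $I-S_i^*S_i=\sum_{k\ne i}S_k^*S_k$, which lets the Halmos-type defect operators be ``spread'' across the summand $\mathcal R_i$.

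\emph{Main obstacle.} The hard part is this last step: verifying $\iota_i^*M_i\iota_i=S_i$ with a genuinely \emph{normal} $M_i$ of norm $\le\sqrt{2d}$. Symmetric choices of $M_i$ (scalar-plus-off-diagonal $2\times2$ blocks) tend to force the compression $\iota_i^*M_i\iota_i$ to be self-adjoint or skew-adjoint, which cannot match a general non-self-adjoint $S_i$; and placing ``$S_i$ in a corner'' forces compression by a full isometry, which costs the factor $d$ rather than $\sqrt{2d}$. So the construction of $M_i$ must be arranged to interact nontrivially with the operator weight $\iota_i^*\iota_i=\tfrac12(S_i^*S_i+S_iS_i^*)$, using the polar structure $S_i=|S_i^*|U_i=U_i|S_i|$ and the defect factorizations above so that no unbounded inverses of $|S_i|$ appear. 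Once a normal $M_i$ with $\sigma(M_i)\subseteq\{0\}\cup\{|z|=\sqrt{2d}\}$ and $\iota_i^*M_i\iota_i=S_i$ is exhibited, forming $M=(M_1,\dots,M_d)$ on $\widehat H$ finishes the proof; alternatively one checks $\mathcal W(S)\subseteq\Wmin{}(\sqrt{2d}\diamondsuit)$ and applies Proposition \ref{prop:introcara}.
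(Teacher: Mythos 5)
Your proposal has a genuine gap: the lemma is never actually proved. The opening reduction is fine (a normal tuple with $M_iM_j=0$ for $i\ne j$ and $\|M_i\|\le\sqrt{2d}$ is the same as one with joint spectrum in the extreme points of $\sqrt{2d}$ times the complex $\ell^1$-ball, so by Proposition \ref{prop:introcara} it would suffice to prove $\mathcal W(S)\subseteq\Wmin{}(\sqrt{2d}\,\diamondsuit)$), and your observation that the balanced hypothesis should enter through $\sum_i(S_i^*S_i+S_iS_i^*)=2I$ is the right instinct. But everything after that is a description of what a construction \emph{would} have to look like, ending with an explicitly acknowledged ``main obstacle'': producing a normal $M_i$ of norm $\le\sqrt{2d}$ supported on $\mathcal R_i$ with $\iota_i^*M_i\iota_i=S_i$. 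That step \emph{is} the lemma; neither branch of your closing sentence (``once such an $M_i$ is exhibited\dots; alternatively one checks $\mathcal W(S)\subseteq\Wmin{}(\sqrt{2d}\diamondsuit)$'') is carried out. (As a minor point, the motivational estimate should read $\iota_i^*\iota_i\ge\tfrac1{4d}(S_i^*S_i+S_iS_i^*)$, since you must average the two one-sided bounds.)

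The missing idea is a reduction to the self-adjoint setting. Write $S_j=X_j+iY_j$ with $X_j,Y_j$ self-adjoint; the two hypotheses are equivalent to $\sum_j(X_j^2+Y_j^2)=I$ together with $\sum_j[X_j,Y_j]=0$. The $2d$-tuple $(X_1,Y_1,\ldots,X_d,Y_d)$ of self-adjoint operators whose squares sum to $I$ falls under \cite[Theorem 6.6]{PSS}, which (combined with Proposition \ref{prop:introcara}) yields a normal dilation $(Q_1,R_1,\ldots,Q_d,R_d)$ with joint spectrum in the extreme points of $\sqrt{2d}\cdot\Diam_{2d}$; these operators pairwise annihilate and each has norm $\sqrt{2d}$. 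Setting $M_j:=Q_j+iR_j$ then gives a normal tuple (normality follows from $Q_jR_j=R_jQ_j=0$), with $M_iM_j=0$ for $i\ne j$ and $\|M_j\|^2=\|Q_j^2+R_j^2\|=2d$. This is exactly the ``interaction with the operator weights'' you were searching for, but it is obtained by quoting an existing real-coordinates dilation theorem rather than by a bare-hands construction; without that input (or an equivalent argument) your proposal does not close.
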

\begin{proof}
Write $S_j = X_j + i Y_j$, so that the two identities given imply that
\bes
X_1^2 + Y_1^2 + \ldots + X_d^2 + Y_d^2 = I
\ees
and
\bes
[X_1, Y_1] + [X_2, Y_2] + \ldots + [X_d, Y_d] = 0.
\ees
From \cite[Theorem 6.6]{PSS} and Proposition \ref{prop:introcara}, there is a normal dilation $(Q_1, R_1, \ldots, Q_d, R_d)$ of $(X_1, Y_1, \ldots, X_d, Y_d)$ with joint spectrum in the extreme points of $\sqrt{2d} \cdot \Diam_{2d}$. Therefore, the self-adjoint operators $Q_i$ and $R_j$ have norm $\sqrt{2d}$ and satisfy $Q_i R_j = 0$ for all $i, j$ and $Q_i Q_j = 0 = R_i R_j$ if $i \not= j$. It follows that $(S_1, \ldots, S_d)$ has a normal dilation $(M_1, \ldots, M_d) = (Q_1 + iR_1, \ldots, Q_d + iR_d)$ such that $||M_i|| = \sqrt{2d}$ and $M_i M_j = 0$ for $i \not= j$.
\end{proof}

\begin{remark} For $d = 2$, this estimate cannot be improved. Consider the elementary $2 \times 2$ matrices $S_1 = E_{12}$ and $S_2 = E_{21} = S_1^*$, which meet the identities required. If a normal dilation  $(M_1, M_2)$ has $||M_j|| \leq r$ and $M_1 M_2 = 0$, then it follows that $||M_1 + M_2^*|| \leq r$. Since $M_1 + M_2^*$ is a dilation of $S_1 + S_2^* = 2 S_1$, which has norm $2$, this implies that $r \geq 2 = \sqrt{2 d}$.
\end{remark}

\begin{theorem}\label{thm:contraction_exp_dilation}
Suppose $T \in \cB(H)^d$ is a tuple of (not necessarily self-adjoint) contractions. Then there is a normal dilation $N$ of $T$ with $||N_i|| \leq \sqrt{2d}$ for each $i$. It follows that
\be\label{eq:yetanothercontainment}
\prod\limits_{j=1}^d \Wmin{}(\overline{\bD}) \subseteq \sqrt{2d} \cdot \Wmin{}\left(\overline{\mathbb{D}}^d\right).
\ee
The constant $\sqrt{2d}$ is not necessarily optimal.
\end{theorem}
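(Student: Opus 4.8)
The plan is to feed $T$, after some massaging, into Lemma~\ref{lem:doublesquaretodiamond}, so that the constant $\sqrt{2d}$ is inherited from there. The containment (\ref{eq:yetanothercontainment}) then follows formally: by (\ref{eq:headdesk2}) every matrix tuple in $\prod_{j}\Wmin{}(\overline{\bD})$ is a tuple of matrix contractions, so the dilation statement applied to it, together with a rescaling by $1/\sqrt{2d}$, puts it into $\Wmin{}(\overline{\bD}^d)$, i.e. into $\sqrt{2d}\cdot\Wmin{}(\overline{\bD}^d)$; the operator case of the dilation statement is handled either directly or by applying Proposition~\ref{prop:introcara} to $\mathcal W(T)\subseteq\sqrt{2d}\cdot\Wmin{}(\overline{\bD}^d)$ (whose extreme points are $(\sqrt{2d}\,\bS^1)^d$).

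It is cleanest to realify. Writing $T_j = X_j + iY_j$ with $X_j,Y_j$ self-adjoint, the contraction hypothesis gives the operator inequality $X_j^2 + Y_j^2 = \tfrac12(T_jT_j^* + T_j^*T_j)\le I$ for each $j$, and what must be produced is a commuting self-adjoint dilation $(P_1,Q_1,\dots,P_d,Q_d)$ of $(X_1,Y_1,\dots,X_d,Y_d)$ with $P_j^2 + Q_j^2\le 2d$ for all $j$ — equivalently, a commuting normal dilation $N_j := P_j + iQ_j$ of $T$ with $\|N_j\|\le\sqrt{2d}$. To set up Lemma~\ref{lem:doublesquaretodiamond}, I would first dilate each $T_j$ to a unitary via Halmos dilation (\ref{eq:SDhalmosdilationdefinition}) and, after padding the defect spaces, arrange the unitaries $U_1,\dots,U_d$ to act on one common Hilbert space; then, tensoring the $U_j$ against carefully chosen rank-one weights (matrix units arranged along permutation patterns, or matrices as in (\ref{eq:thetensors})), build a tuple $S=(S_1,\dots,S_d)$ with $S_1S_1^*+\dots+S_dS_d^* = I = S_1^*S_1+\dots+S_d^*S_d$ for which each $T_j$ is the compression of some fixed linear combination $\sum_k a_{jk}S_k$, the coefficients being normalized so that $\max_k|a_{jk}|\le 1$. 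Lemma~\ref{lem:doublesquaretodiamond} then produces a commuting normal dilation $M$ with $\|M_k\|\le\sqrt{2d}$ and $M_jM_k=0$ for $j\ne k$; since $M$ is a commuting normal tuple, $M_jM_k=0$ also forces $M_jM_k^*=0$, so the joint spectral measure of $M$ is supported on the union of the coordinate axes, and therefore $\big\|\sum_k a_{jk}M_k\big\|\le\sqrt{2d}\cdot\max_k|a_{jk}|\le\sqrt{2d}$. Setting $N_j := \sum_k a_{jk}M_k$ completes the dilation.

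The step I expect to be the main obstacle is precisely this construction: one must keep the identities $\sum_j S_jS_j^* = I = \sum_j S_j^*S_j$ exactly while the tensoring still lets each $T_j$ be read off by a single compression, and with combining coefficients of modulus at most $1$. The naive attempt $S_j=\tfrac1{\sqrt d}U_j$ satisfies the identities but recovers only $\tfrac1{\sqrt d}T_j$, so undoing that scaling would worsen the constant from $\sqrt{2d}$ to $d\sqrt 2$; the role of the more elaborate weights is to block this loss. It is the ``spectrum on the axes'' feature of $M$ — which controls $\big\|\sum_k a_{jk}M_k\big\|$ by $\max_k|a_{jk}|$ rather than by $\sum_k|a_{jk}|$ or $(\sum_k|a_{jk}|^2)^{1/2}$ — that pins the final constant at $\sqrt{2d}$ rather than something larger. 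That $\sqrt{2d}$ need not be optimal is already clear from (\ref{eq:nonuniformconstsumm}).
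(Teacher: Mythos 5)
Your architecture is exactly right and matches the paper's: Halmos-dilate each $T_j$ to a unitary $U_j$, manufacture a tuple $S$ satisfying $\sum_j S_jS_j^* = I = \sum_j S_j^*S_j$, feed it to Lemma~\ref{lem:doublesquaretodiamond}, and recover the $N_j$ as linear combinations of the $M_k$ with coefficients of modulus at most $1$, using precisely the ``spectrum on the axes'' observation ($M_jM_k=0$ plus normality forces $M_jM_k^*=0$) to bound $\bigl\|\sum_k a_{jk}M_k\bigr\|$ by $\sqrt{2d}\cdot\max_k|a_{jk}|$. You also correctly isolate the one step that everything hinges on. But that step is left unfilled, and the mechanism you propose for it --- tensoring the $U_j$ against rank-one weights as in (\ref{eq:thetensors}) --- does not work. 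A short computation shows why: if $S_j = U_j\otimes V_j$, then $\sum_j S_jS_j^* = I\otimes\sum_j V_jV_j^*$, so you need $\sum_j V_jV_j^* = I = \sum_j V_j^*V_j$; but reading off $U_j$ (hence $T_j$) by a single compression with coefficients $|a_{jk}|\le 1$ forces each $V_j$ to have a common matrix entry of modulus at least $1$, which makes the $(1,1)$ entry of $\sum_j V_jV_j^*$ at least $d$. Any repair reintroduces a factor of $d$ or $\sqrt d$ into the coefficients, which is exactly the loss you were trying to avoid.

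The missing idea is a discrete Fourier transform rather than a tensor product. Let $\omega$ be a primitive $d$th root of unity and set
\begin{equation*}
S_j := \frac{1}{d}\sum_{k=0}^{d-1}\omega^{jk}U_k .
\end{equation*}
Orthogonality of characters ($1+\omega^r+\cdots+\omega^{r(d-1)}=0$ for $\omega^r\neq 1$) gives $\sum_j S_jS_j^* = \frac{1}{d}\sum_k U_kU_k^* = I$ and likewise for $\sum_j S_j^*S_j$, because the unitarity of the $U_k$ kills the diagonal terms and the character sums kill the cross terms. Moreover Fourier inversion gives $U_j = \sum_{n}\omega^{-jn}S_n$ \emph{exactly} --- no compression is needed at this stage, and the coefficients $\omega^{-jn}$ have modulus exactly $1$. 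From there your own finishing argument applies verbatim: $N_j := \sum_n \omega^{-jn}M_n$ is a normal tuple dilating $U_j$ (hence $T_j$) with $\|N_j\|\le\sqrt{2d}$. One further small caution: the non-optimality disclaimer in the statement is only the assertion that $\sqrt{2d}$ is not \emph{known} to be optimal; (\ref{eq:nonuniformconstsumm}) concerns self-adjoint contractions and does not by itself show that $\sqrt{2d}$ can be beaten for general contractions.
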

\begin{proof}
For convenience, we label the operators in $T$ as $T_0, \ldots, T_{d-1}$. Use Halmos dilation (if necessary) to obtain a dilation tuple $U = (U_0, \ldots, U_{d-1})$ where each $U_i$ is unitary. Let $\omega$ be a primitive $d$th root of unity, and define the averages
\be\label{eq:av_proc}
S_j := \frac{1}{d} \sum_{k=0}^{d-1} \omega^{jk} U_k.
\ee
A simple computation using the identity $1 + \omega^{r} + \ldots + \omega^{r(d-1)} = 0$ for $\omega^r \not= 1$ shows that
\bes
\sum_{j=0}^{d-1} S_j S_j^* = I = \sum_{j=0}^{d-1} S_j^* S_j.
\ees
By Lemma \ref{lem:doublesquaretodiamond}, there is a normal dilation $M$ of $S$ with $||M_j|| \leq \sqrt{2d}$ and $M_i M_j = 0$ for $i \not=j$. Moreover, it follows from (\ref{eq:av_proc}) that
\bes
N_j := \sum_{n=0}^{d-1} \omega^{-jn} M_n
\ees
is a dilation of $U_j$ (and hence also of $T_j$). Since $M_i M_j = 0$ for $i \not= j$, we have that $||N_j|| \leq \sqrt{2d}$, and $N = (N_0, \ldots, N_{d-1})$ is a normal tuple.
\end{proof}

\begin{corollary}
Suppose $T \in \cK(H)^d$ is a tuple of (not necessarily self-adjoint) compact contractions. Then for any $\varepsilon > 0$, there is a normal tuple $N$ consisting of compact operators such that $N$ is a dilation of $T$ and $||N_i|| \leq \sqrt{2d} + \varepsilon$ for each $i$.
\end{corollary}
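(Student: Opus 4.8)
The plan is to combine the quantitative dilation of Theorem \ref{thm:contraction_exp_dilation} with the approximate compact dilation of Proposition \ref{prop:approxcompactdilation}. A direct adaptation of the explicit procedure behind Theorem \ref{thm:contraction_exp_dilation} will not work, since it begins with a Halmos dilation (\ref{eq:halmosdilationdefinition}), which destroys compactness; instead I would feed the \emph{containment} (\ref{eq:yetanothercontainment}) into the perturbed-spectrum machinery. First I would dispose of the finite-dimensional case: if $\dim H < \infty$, then the proof of Theorem \ref{thm:contraction_exp_dilation} (via Lemma \ref{lem:doublesquaretodiamond} and Proposition \ref{prop:introcara}) produces a normal dilation $N$ acting on a finite-dimensional, hence automatically compact, space, with $\|N_i\| \le \sqrt{2d} < \sqrt{2d}+\varepsilon$. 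So from now on I may assume $H$ is infinite-dimensional.

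Next, set $K := (\sqrt{2d}\,\overline{\bD})^d \subseteq \bC^d$, a compact convex body. Since each $T_i$ is a contraction and UCP maps are contractive, every tuple in $\mathcal{W}(T)$ consists of matrix contractions, so $\mathcal{W}(T) \subseteq \prod_{j=1}^d \Wmin{}(\overline{\bD})$ by (\ref{eq:headdesk2}); combined with (\ref{eq:yetanothercontainment}) and (\ref{eq:affineminmax}) this gives $\mathcal{W}(T) \subseteq \sqrt{2d}\cdot\Wmin{}(\overline{\bD}^d) = \Wmin{}(K)$. Now fix $\delta \in (0, \varepsilon/\sqrt{2d}]$ and apply Proposition \ref{prop:approxcompactdilation}: there is a compact normal dilation $N$ of $T$ decomposing as $N = A \oplus B$ with $\|A_i\| < \delta$ and $\sigma(B) \subseteq (1+\delta)K = \big((1+\delta)\sqrt{2d}\,\overline{\bD}\big)^d$. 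Because the $B_i$ are commuting normal operators whose joint spectrum lies in this polydisc, each $\sigma(B_i)$ is contained in $(1+\delta)\sqrt{2d}\,\overline{\bD}$, whence $\|B_i\| \le (1+\delta)\sqrt{2d}$. Therefore $\|N_i\| = \max\{\|A_i\|,\|B_i\|\} \le (1+\delta)\sqrt{2d} \le \sqrt{2d}+\varepsilon$ for every $i$, and $N$ is the required compact normal dilation of $T$.

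The argument is essentially bookkeeping, and I do not expect a genuine obstacle; the only points requiring care are (i) that Proposition \ref{prop:approxcompactdilation} is applied with the polydisc $K$, so that a bound on the joint spectrum of $B$ converts directly into the coordinatewise norm bound $\|B_i\| \le (1+\delta)\sqrt{2d}$, and (ii) that $\|A_i \oplus B_i\|$ is the maximum of the two summand norms, so the small-norm summand $A$ does not spoil the estimate. Shrinking $\delta$ then yields the bound $\sqrt{2d}+\varepsilon$ exactly as stated.
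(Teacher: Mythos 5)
Your proposal is correct and is essentially the paper's own proof, which simply applies Proposition \ref{prop:approxcompactdilation} to the containment $\mathcal{W}(T) \subseteq \Wmin{}\bigl( \sqrt{2d} \cdot \overline{\bD}^d \bigr)$ obtained from Theorem \ref{thm:contraction_exp_dilation}. Your additional bookkeeping (the separate finite-dimensional case, the passage from the joint spectrum of $B$ to the coordinatewise norm bounds, and the choice of $\delta$) just makes explicit what the paper leaves implicit.
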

\begin{proof}
Apply Proposition \ref{prop:approxcompactdilation} to the claim $\mathcal{W}(T) \subseteq \Wmin{}\left( \sqrt{2d} \cdot \overline{\bD}^d \right)$, which follows from Theorem \ref{thm:contraction_exp_dilation}.
\end{proof}

The constant $\sqrt{2d}$ in Theorem \ref{thm:contraction_exp_dilation} strictly improves the constant $\min\{d, 2 \sqrt{d}\}$ from \cite[Corollary 6.11]{PSS} when $d \geq 3$, but for $d = 2$, the two constants are equal. While Lemma  \ref{lem:doublesquaretodiamond} is optimal when $d = 2$, the proof of Theorem \ref{thm:contraction_exp_dilation} does not use the full strength of the lemma. Namely, the final step of the proof (when $d = 2$) seeks to show that
\bes
N_1 := M_1 + M_2 \hspace{.4 in} \text{and} \hspace{.4 in} N_2 := M_1 - M_2
\ees
are normal operators which commute and have some norm bound $||N_i|| \leq C$. Lemma \ref{lem:doublesquaretodiamond} in the case $d = 2$ shows that $C = \sqrt{2 \cdot 2} = 2$ can be obtained, with the stronger condition that the building blocks $M_1$ and $M_2$ are normal operators with $M_1 M_2 = 0 = M_2 M_1$. Thus, we cannot necessarily conclude that the constant in Theorem \ref{thm:contraction_exp_dilation} is optimal. We remind the reader that due to the estimate in Corollary \ref{cor:contractionSDdilation}, knowledge of the optimal constant in Theorem \ref{thm:contraction_exp_dilation} also produces a bound on $\USD{d}$, which might improve the bound  (\ref{eq:USD_bounds_sqrtandd}).

As noted in (\ref{eq:headdesk2}), the set $\mathcal{S}$ of all $d$-tuples of matrix contractions is equal to $\prod\limits_{j=1}^d \Wmin{}(\overline{\bD})$, and in particular, it is strictly smaller than $\Wmax{}\left(\overline{\mathbb{D}}^d\right)$. In fact, the containment (\ref{eq:yetanothercontainment}) holds even though the larger scale $2 \sqrt{d}$ in 
\bes
\Wmax{}\left(\overline{\mathbb{D}}^d\right) \subseteq 2 \sqrt{d} \cdot \Wmin{}\left(\overline{\mathbb{D}}^d\right)
\ees
is optimal by \cite[Corollary 6.11]{PSS}. However, while $\mathcal{S}$ is not a maximal matrix convex set, $\mathcal{S}$ is trivially equal to the set of matrix tuples $T \in \Md$ such that
\be\label{eq:complex_polar}
\lambda \in \bC^d, \hspace{.1 in} |\lambda_1| + \ldots + |\lambda_d| \leq 1 \hspace{.5 in} \implies \hspace{.5 in} ||\lambda_1 T_1 + \ldots + \lambda_d T_d|| \leq 1.
\ee
The scalar tuple $\lambda$ is selected from the complex $\ell^1$ ball, which is dual to the complex $\ell^\infty$ ball $\overline{\mathbb{D}}^d$. Therefore, (\ref{eq:complex_polar}) may be considered a $\bC$-linear analogue of the real inequalities (\ref{eq:wmaxdef}) that characterize sets of the form $\Wmax{}(K)$. Further, since each $\mathcal{S}_n$ is closed under multiplication by $n \times n$ unitary matrices, $\mathcal{S}$ is a free circular matrix convex set in the sense of \cite[\S 1]{circular}.

We now pursue another matrix convex set containment through explicit dilation. As in (\ref{eq:universal_AC_sec3}), consider the universal $C^*$-algebra
\be\label{eq:universal_AC}
\mathcal{A}_d := C^*(x_1, \ldots, x_d \hspace{4 pt} | \hspace{4 pt} x_i = x_i^*, \hspace{4 pt} x_i^2 = 1, \hspace{4 pt} x_i x_j = -x_j x_i \text{ for } i \not= j)
\ee
and the concrete realization of the generators in the tuple $F^{[d]} = (F_1^{[d]}, \ldots F_d^{[d]})$, where the $F^{[i]}_j$ are $2^{d-1} \times 2^{d-1}$ matrices defined by $F_1^{[1]} = 1$ and the following recursive identities for $d \geq 2$:
\bes
F_j^{[d]} := F_j^{[d-1]} \otimes \begin{pmatrix} 1 & 0 \\ 0 & -1 \end{pmatrix}, 1 \leq j \leq d - 1,
\ees
\bes
F_{d}^{[d]} := I_{2^{d-2}} \otimes \begin{pmatrix} 0 & 1 \\ 1 & 0 \end{pmatrix}.
\ees
If $\eucball{d}$ denotes the closed $\ell^2$ unit ball in $\bR^d$, then \cite[Proposition 14.14]{originalhelton} (adjusted to the self-adjoint complex setting) shows that the free spectrahedron
\bes
D_{F^{[2]}} := \left\{X \in \mathcal{M}^2: X_1 \otimes F^{[2]}_1 + X_2 \otimes F^{[2]}_2 \leq I \right\}
\ees
satisfies
\be\label{eq:spec=min}
D_{F^{[2]}} = \Wmin{}(\eucball{2}).
\ee
Two proofs are given in \cite[Proposition 14.14]{originalhelton}, one of which uses an explicit dilation procedure. The polar dual (see \cite[\S 3]{DDSS} for details) of (\ref{eq:spec=min}) is the equivalent expression 
\be\label{eq:ACrange=max2}
\mathcal{W}(F^{[2]}) = \Wmax{}(\eucball{2}).
\ee
It is not clear how to extend the techniques of \cite[Proposition 14.14]{originalhelton} in order to prove the extension of (\ref{eq:spec=min}) or its dual (\ref{eq:ACrange=max2}) to $d > 2$. We will focus on providing explicit dilation evidence for the dual formulation
\be\label{eq:ACrange=maxd}
\mathcal{W}(F^{[d]}) \queseq \Wmax{}(\eucball{d}),
\ee
noting that the containment $\subseteq$ is trivial. Moreover, while the polar dual may allow one to switch between two equivalent problems, we note that explicit dilation information does not generally survive applying the polar dual.

 Since the members of $F^{[d]}$ are generators of the universal $C^*$-algebra $\mathcal{A}_d$, application of Stinespring factorization shows that a tuple $T \in \Md$ is in $\mathcal{W}(F^{[d]})$ if and only if there is a dilation $A$ of $T$ consisting of self-adjoint unitiaries $A_i$ such that $A_i A_j = -A_j A_i$ for $i \not= j$. Thus, a proof that (\ref{eq:ACrange=maxd}) holds would imply that the existence of such a dilation $A$ is characterized by the satisfaction of linear inequalities by $T$. We will provide some evidence for (\ref{eq:ACrange=maxd}) by using more restrictive linear inequalities, which place $\mathcal{W}_1(T)$ in a rectangular prism inside the ball.

\begin{theorem}\label{thm:cont_to_ACaj}
Let $X = (X_1, \ldots, X_d) \in \cB(H)^d_{sa}$ be a $d$-tuple of self-adjoint contractions, and define $\text{AC}(X) \subseteq \cB(H)_{sa}$ as the collection of all self-adjoint operators which anticommute with each $X_j$. If $a_1, \ldots, a_d > 0$ satisfy $\sum\limits_{j=1}^d a_j^{-2} \leq 1$, then there exists a dilation tuple $A \in \cB(H \otimes \mathbb{C}^{4^{d-1}})^d_{sa}$ of $X$ with the following properties.
\begin{itemize}
\item For each $j$, $||A_j|| \leq a_j$.
\item For $j \not= k$, $A_j$ and $A_k$ anticommute.
\item For each $W \in \text{AC}(X)$, $W \otimes \begin{pmatrix} 1 & 0 \\ 0 & -1 \end{pmatrix}^{\otimes 2(d-1)}$ anticommutes with every $A_j$.
\item The block entries of each $A_j$ are in the real unital $C^*$-algebra generated by $X_1, \ldots, X_d$.
\end{itemize}
\end{theorem}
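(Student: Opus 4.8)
The plan is to establish all four properties simultaneously by induction on $d$, the clause about $\mathrm{AC}(X)$ being exactly the strengthening of the inductive statement that makes the recursion close. The base case $d=1$ is trivial: take $A_1=X_1$ on $H\otimes\mathbb C^{4^0}=H$, so that $\|A_1\|\le 1\le a_1$, the anticommutation clause is vacuous, every $W\in\mathrm{AC}(X_1)$ anticommutes with $A_1$ by definition, and the only block entry is $X_1\in C^*(X_1)$.

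For the inductive step I would tensor the ambient space by a new factor $\mathbb C^4=\mathbb C^2\otimes\mathbb C^2$, using on each $\mathbb C^2$ the real Pauli-type reflections $\sigma_x$, $\sigma_z$ (with $\sigma_x\sigma_z=-\sigma_z\sigma_x$ and $\sigma_x^2=\sigma_z^2=I$; real, so that block entries stay in the \emph{real} $C^*$-algebra). The tuple $A'=(A'_1,\dots,A'_{d-1})$ supplied by the case $d-1$, applied with the same constants $a_1,\dots,a_{d-1}$ (which again satisfy $\sum_{j<d}a_j^{-2}\le 1$), is carried up by the twist $A_j:=A'_j\otimes\sigma_z\otimes\sigma_z$ for $j<d$; this preserves self-adjointness, preserves $\|A_j\|=\|A'_j\|\le a_j$, preserves the dilation property, and preserves pairwise anticommutation since $\sigma_z^2=I$. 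The new operator $A_d$ is assembled from $X_d$ by a Halmos-type splitting $A_d=(\text{term in }X_d)\otimes\sigma_z^{(a)}+(\text{term in the defect }D)\otimes\sigma_x^{(a)}$, where $D:=\bigl(I-\sum_{j=1}^{d}X_j^2/a_j^{2}\bigr)^{1/2}$ is the positive operator playing the role of Halmos's $\sqrt{I-X^2}$: it lies in the real unital $C^*$-algebra generated by $X_1,\dots,X_d$ and is well defined precisely because $\sum a_j^{-2}\le 1$ forces $\sum X_j^2/a_j^2\le I$, while the second factor $\mathbb C^2_b$ is used to force $\{A_d,A_j\}=0$ for $j<d$. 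The off-diagonal $\sigma_x^{(a)}$ in the second summand rescues the compression: restricting $H\otimes\mathbb C^{4^{d-1}}$ back to $H$ along a unit vector lying in the $+1$-eigenspace of $\sigma_z^{(a)}$ annihilates the $D$-term (because $\langle\sigma_x^{(a)}e_+,e_+\rangle=0$ for $e_+=(1,0)$), so $A_d$ compresses to $X_d$ exactly, with no need for a common eigenvector. The propagation of the $\mathrm{AC}(X)$ clause, namely that $W\otimes\sigma_z^{\otimes 2(d-1)}$ anticommutes with each $A_j$ for $W\in\mathrm{AC}(X)$, is then a Pauli-algebra bookkeeping: for $j<d$ one feeds in the level-$(d-1)$ hypothesis (the twist appends two $\sigma_z$'s), and for $j=d$ one computes directly; this same computation verifies $\{A_d,A_j\}=0$.

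The step I expect to be the real obstacle is matching the optimal norm bound $\|A_j\|\le a_j$ to the Euclidean constraint $\sum a_j^{-2}\le 1$. A naive orthogonalization of $X_1,\dots,X_d$ (pairwise products zero, hence trivially anticommuting) only reaches the weaker $\ell^1$ threshold $\sum a_j^{-1}\le 1$ — the "simplex-pointed'' regime visible in Theorem~\ref{thm:positivescaling} and Corollary~\ref{cor:prodpossimplex} — so the argument must use anticommutation essentially: for genuinely anticommuting self-adjoint $A_j$ one has $\bigl(\sum\lambda_j A_j\bigr)^2=\sum\lambda_j^2 A_j^2$, an $\ell^2$ rather than an $\ell^1$ behaviour, and the Halmos-type formula is arranged so that in $A_j^2$ all cross terms cancel and the result collapses to the scalar $a_j^2$ times the identity, with the inequality $I-\sum X_k^2/a_k^2\ge 0$ providing exactly the right size for the defect term. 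Orchestrating self-adjointness, exact compression, pairwise anticommutation, block entries in the real $C^*(X_1,\dots,X_d)$, and $\|A_j\|=a_j$ at once pins down a rigid choice of ingredients on the new $\mathbb C^4$ factor; checking that this choice is internally consistent — in particular that the commutators of the Halmos-ified blocks are annihilated once they are tensored against the auxiliary reflections — is the technical heart, while the remaining claims (self-adjointness, finite dimension of the auxiliary space, membership of block entries in the real $C^*$-algebra) are immediate from the shape of the construction.
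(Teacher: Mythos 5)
There is a genuine gap in the inductive step: the anticommutation $\{A_d,A_j\}=0$ for $j<d$ does not follow from your construction. You apply the inductive hypothesis to the untouched tuple $(X_1,\ldots,X_{d-1})$, obtain $A_j'$ whose block entries are (noncommutative words in) $X_1,\ldots,X_{d-1}$, set $A_j=A_j'\otimes\sigma_z\otimes\sigma_z$, and then try to bolt on $A_d$ built from $X_d$ and a defect operator on the new $\bC^4$ factor. Write $A_d=\sum_k P_k\otimes M_k$ with $M_k\in M_4(\bR)$; for $A_d$ to anticommute with $A_j'\otimes(\sigma_z\otimes\sigma_z)$ each $P_k$ must either commute or anticommute with every $A_j'$ (according as $M_k$ anticommutes or commutes with $\sigma_z\otimes\sigma_z$). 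But the $P_k$ carrying $X_d$ (needed so that $A_d$ compresses to $X_d$) have no commutation relations whatsoever with the $A_j'$, since $X_d$ is an arbitrary self-adjoint contraction with no relation to $X_1,\ldots,X_{d-1}$. Tensoring with Pauli matrices cannot manufacture this anticommutation; the cross terms between $X_d$ and the words in $X_1,\ldots,X_{d-1}$ survive. (Your defect $D=(I-\sum_j X_j^2/a_j^2)^{1/2}$ also does not commute with $X_d$, so even the norm bound $\|A_d\|\le a_d$ is not secured by the ``cross terms cancel'' argument.)

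The paper's proof closes the recursion differently, and this is the idea you are missing. After Halmos-dilating all of $X_1,\ldots,X_d$ to self-adjoint unitaries $Y_j$, one forms \emph{intermediate} dilations
$G_j=\begin{pmatrix} Y_j & \tfrac{Y_jY_d+Y_dY_j}{2r} \\ \tfrac{Y_jY_d+Y_dY_j}{2r} & -Y_j\end{pmatrix}$ for $j<d$ and
$E=\begin{pmatrix} Y_d & -rI \\ -rI & -Y_d\end{pmatrix}$;
the off-diagonal correction term $\tfrac{Y_jY_d+Y_dY_j}{2r}$ is chosen precisely so that each $G_j$ anticommutes with $E$. The inductive hypothesis is then applied to the tuple $G=(G_1,\ldots,G_{d-1})$ --- not to $(X_1,\ldots,X_{d-1})$ --- with $E\in\text{AC}(G)$, so that the $\text{AC}$ clause of the induction (which you correctly identified as the strengthening that should make the recursion close, but then used only for the $W$'s from $\text{AC}(X)$) is what delivers $\{A_j,A_d\}=0$ for $A_d:=E\otimes\begin{pmatrix}1&0\\0&-1\end{pmatrix}^{\otimes 2(d-1-1)}$. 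This also explains why your ``same constants'' bookkeeping is a symptom of the gap: since $\|G_j\|\le\sqrt{1+1/r^2}>1$ with $a_d=\sqrt{1+r^2}$, the induction must be applied with the strictly smaller constants $b_j=a_j/\sqrt{1+1/r^2}$, which satisfy $\sum_{j<d}b_j^{-2}\le 1$ exactly because $\sum_{j\le d}a_j^{-2}\le 1$, and the final norms recombine as $(a_j/b_j)\cdot b_j=a_j$. Your base case, the overall inductive framework, and the heuristic that anticommutation yields $\ell^2$ rather than $\ell^1$ scaling are all correct, but without the intermediate $G_j$ (or an equivalent device) the inductive step does not go through.
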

\begin{proof}
The case $d = 1$ is trivial, as no dilation is necessary. We proceed by induction: suppose the theorem holds for $d - 1$. Given a tuple $X \in \cB(H)^d_{sa}$ of self-adjoint contractions, consider first the Halmos dilations $Y_i = \begin{pmatrix} X_i & \sqrt{1-X_i^2} \\ \sqrt{1-X_i^2} & -X_i \end{pmatrix} \in \cB(H \otimes \mathbb{C}^2)_{sa}$, which anticommute with $\begin{pmatrix} W & 0 \\ 0 & -W \end{pmatrix} = W \otimes \begin{pmatrix} 1 & 0 \\ 0 & -1 \end{pmatrix}$ for each $W \in \text{AC}(X)$. Given $a_1, \ldots, a_d > 0$ with $\sum\limits_{j=1}^d a_j^{-2} \leq 1$, let $a_d = \sqrt{1+r^2}$, and define $b_j = \frac{a_j}{\sqrt{1+1/r^2}}$ for $1 \leq j \leq d - 1$, so that $\sum\limits_{j=1}^{d-1} b_j^{-2} \leq 1$. Make intermediate dilations as follows:

\[
G_j := \begin{pmatrix} Y_j & \cfrac{Y_j Y_d + Y_d Y_j}{2 r} \\ \cfrac{Y_j Y_d + Y_d Y_j}{2 r} & -Y_j \end{pmatrix}, \hspace{.06 in} 1 \leq j \leq d-1,\hspace{.5 in}
E := \begin{pmatrix} Y_d & -r I \\ -r I & -Y_d \end{pmatrix},
\]
so that $E$ anticommutes with $G_1, \ldots, G_{d-1}$. Within each operator, the diagonal term anticommutes with the off-diagonal term, so applying the $C^*$-norm identity $||A|| = \sqrt{||AA^*||}$ to these self-adjoint operators shows that
\[
||G_j|| \leq \sqrt{1^2 + \frac{1}{r^2}} = \frac{a_j}{b_j}, \hspace{.06 in} 1 \leq j \leq d - 1, \hspace{.65 in}
||E|| \leq \sqrt{1^2 + r^2} = a_d.
\]
Moreover, since $Y_1, \ldots, Y_d$ anticommute with $W \otimes \begin{pmatrix} 1 & 0 \\ 0 & -1 \end{pmatrix}$ for $W \in \text{AC}(X)$, it follows that $G_1, \ldots, G_{d-1}$ anticommute with $W \otimes \begin{pmatrix} 1 & 0 \\ 0 & -1 \end{pmatrix}^{\otimes 2}$. That is,
\be\label{eq:inductive_AC_stuff}
\{E\} \cup \left\{W \otimes \begin{pmatrix} 1 & 0 \\ 0 & -1 \end{pmatrix}^{\otimes 2}: W \in \text{AC}(X) \right\} \subseteq \text{AC}(G).
\ee

Apply the inductive assumption (scaled by $\frac{a_j}{b_j}$) to the tuple $G \in \cB(H \otimes \mathbb{C}^4)^{d-1}_{sa}$, the collection $\text{AC}(G)$, and the scalars $b_1, \ldots, b_{d-1}$. It follows that there exist pairwise anticommuting dilations $A_1, \ldots, A_{d-1} \in \cB(H \otimes \mathbb{C}^4 \otimes \mathbb{C}^{4^{d-2}})_{sa} = \cB(H \otimes \bC^{4^{d-1}})_{sa}$ of $G_1, \ldots, G_{d-1}$ with norm $||A_j|| \leq \frac{a_j}{b_j} \cdot b_j = a_j$ and with block entries in the real unital $C^*$-algebra generated by $G_1, \ldots, G_{d-1}$. Moreover, for each $V \in \text{AC}(G)$, $A_1, \ldots, A_{d-1}$ anticommute with $V \otimes \begin{pmatrix} 1 & 0 \\ 0 & -1 \end{pmatrix}^{\otimes 2(d-2)}$. By (\ref{eq:inductive_AC_stuff}), for each $W \in \text{AC}(X)$, $A_1, \ldots, A_{d-1}$ anticommute with $W \otimes \begin{pmatrix} 1 & 0 \\ 0 & -1 \end{pmatrix}^{\otimes 2} \otimes \begin{pmatrix} 1 & 0 \\ 0 & -1 \end{pmatrix}^{\otimes 2(d-2)} = W  \otimes \begin{pmatrix} 1 & 0 \\ 0 & -1 \end{pmatrix}^{\otimes 2(d-1)}$. Similarly, if $A_d := E \otimes \begin{pmatrix} 1 & 0 \\ 0 & -1 \end{pmatrix}^{\otimes 2(d-2)}$, then $||A_d|| =||E|| \leq a_j$, and (\ref{eq:inductive_AC_stuff}) shows that $A_d$ anticommutes with $A_1, \ldots, A_{d-1}$. Finally, since $E$ anticommutes with $W \otimes \begin{pmatrix} 1 & 0 \\ 0 & -1 \end{pmatrix}^{\otimes 2}$ for each $W \in \text{AC}(X)$, it follows that $A_d$ anticommutes with $W \otimes \begin{pmatrix} 1 & 0 \\ 0 & -1 \end{pmatrix}^{\otimes 2(d-1)}$. The inductive step is complete, as an examination of the intermediate dilations shows that the block entries of $A_d$ belong to the real unital $C^*$-algebra generated by the $X_i$.
\end{proof}

We note that the operators $A_j$ constructed in Theorem \ref{thm:cont_to_ACaj} have $||A_j|| = a_j$, but they do not necessarily satisfy $A_j^2 = a_j^2 I$. This may be remedied by a \lq\lq step-by-step\rq\rq\hspace{0pt} modification of the Halmos dilation procedure, designed to preserve pairwise anticommutation. We include the details for completeness.

\begin{proposition}\label{prop:stepbystep_daybyday}
Let $A \in \cB(H)^d_{sa}$ be a tuple of pairwise anticommuting self-adjoint operators with $||A_j|| \leq a_j$. Then there is a dilation $M \in \cB(H \otimes \bC^{2^{d}})^d_{sa}$ consisting of pairwise anticommuting self-adjoint operators $M_j$ with $M_j^2 = a_j^2 I$. Moreover, we may choose $M_j$ such that for every $X \in \cB(H)_{sa}$ that anticommutes with $A_1, \ldots, A_d$, $\bigoplus\limits_{i=1}^{2^d} X$ anticommutes with $M_1, \ldots, M_d$.
\end{proposition}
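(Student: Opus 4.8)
The plan is to convert the operators one at a time by means of a scaled Halmos dilation, at each stage absorbing one tensor factor $\sigma := \begin{pmatrix} 1 & 0 \\ 0 & -1 \end{pmatrix}$ into every operator that has not yet been converted (and into the ``passenger'' $X$) so that pairwise anticommutation is never disturbed. Write $\rho := \begin{pmatrix} 0 & 1 \\ 1 & 0 \end{pmatrix}$. If $B \in \cB(H)_{sa}$ has $\|B\| \le a$ and $C := \sqrt{a^2 I - B^2} \ge 0$, then $B \otimes \sigma + C \otimes \rho$ is self-adjoint, compresses to $B$ along the isometry $h \mapsto h \otimes e_1$, and squares to $a^2 I$, since $B$ commutes with $C$ while $\sigma \rho = -\rho\sigma$.

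Everything then reduces to two short computations. First, if $P \in \cB(H)_{sa}$ anticommutes with $B$, then $P$ commutes with $B^2$, hence with $C = \sqrt{a^2 I - B^2}$, and a one-line check using $\sigma^2 = I$ and $\sigma\rho = -\rho\sigma$ shows that $P \otimes \sigma$ anticommutes with $B \otimes \sigma + C \otimes \rho$ (the $B\otimes\sigma$ part contributes $(PB+BP)\otimes I = 0$, and the $C\otimes\rho$ part contributes $(PC-CP)\otimes\sigma\rho = 0$). Second, $(P \otimes \sigma)(R \otimes \sigma) = PR \otimes I$ for all $P, R$, so tensoring a commuting or anticommuting pair by $\sigma$ on both factors preserves that relation.

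With these I would induct on $k = 0, 1, \dots, d$, maintaining the invariant: on $H \otimes \mathbb{C}^{2^k}$ there are self-adjoint operators $M_1, \dots, M_k$, $A^{(k)}_{k+1}, \dots, A^{(k)}_d$ and $X^{(k)} = X \otimes \sigma^{\otimes k}$ such that $M_i^2 = a_i^2 I$ and $M_i$ dilates $A_i$ for $i \le k$; $\|A^{(k)}_j\| \le a_j$ and $A^{(k)}_j$ dilates $A_j$ for $j > k$; and all of the listed operators pairwise anticommute. The base case $k = 0$ is the hypothesis of the proposition. To pass from $k$ to $k+1$, apply the scaled Halmos dilation of the first paragraph to $A^{(k)}_{k+1}$ (whose norm is at most $a_{k+1}$), obtaining $M_{k+1} := A^{(k)}_{k+1} \otimes \sigma + \sqrt{a_{k+1}^2 I - (A^{(k)}_{k+1})^2} \otimes \rho$ on $H \otimes \mathbb{C}^{2^{k+1}}$, which has $M_{k+1}^2 = a_{k+1}^2 I$ and dilates $A_{k+1}$; and replace every other operator $U$ on the list by $U \otimes \sigma$. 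Each such $U$ anticommutes with $A^{(k)}_{k+1}$ by the invariant, so the first identity gives $\{U \otimes \sigma,\, M_{k+1}\} = 0$, the second gives that the various $U \otimes \sigma$ still pairwise anticommute, and $(U \otimes \sigma)^2 = U^2 \otimes I$ keeps the norm bounds and the relations $M_i^2 = a_i^2 I$ intact. This restores the invariant with $k+1$ in place of $k$.

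After $d$ steps we have, on $H \otimes \mathbb{C}^{2^d}$, pairwise anticommuting self-adjoint $M_1, \dots, M_d$ with $M_j^2 = a_j^2 I$, each dilating $A_j$, together with $X^{(d)} = X \otimes \sigma^{\otimes d}$ anticommuting with every $M_j$. Since $\sigma^{\otimes d}$ is a diagonal matrix with $2^{d-1}$ entries equal to $1$ and $2^{d-1}$ equal to $-1$, the operator $X^{(d)}$ is, after permuting the $2^d$ diagonal copies of $H$, the direct sum of $2^{d-1}$ copies of $X$ and $2^{d-1}$ copies of $-X$; as $-X$ also anticommutes with each $A_j$, this is the asserted behaviour of $\bigoplus_{i=1}^{2^d} X$ (and matches the normalization $X \otimes \begin{pmatrix} 1 & 0 \\ 0 & -1 \end{pmatrix}^{\otimes d}$ used in Theorem \ref{thm:cont_to_ACaj}). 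The one point that needs thought is precisely this choice of auxiliary factor: tensoring the passenger operators with $I_2$ instead of $\sigma$ would destroy their anticommutation with the $C \otimes \rho$ part of the Halmos block, so $\sigma$ is forced; once that is recognized, the argument is just the two commutator identities applied repeatedly, which is why it can be dispatched as a ``for completeness'' proposition.
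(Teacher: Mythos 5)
Your construction is essentially the paper's: the paper also iterates the scaled Halmos dilation, replacing each already-dilated operator $L_j$ by $L_j \oplus (-L_j) = L_j \otimes \left(\begin{smallmatrix}1&0\\0&-1\end{smallmatrix}\right)$ at every stage, so unrolling its recursion on $d$ yields exactly your step-by-step loop, and your two identities are the same computations. The one place you diverge is the bookkeeping for the passenger operator, and there your version is the correct one: as you observe, $X \otimes I_{2^d}$ does \emph{not} anticommute with the Halmos block $B\otimes\sigma + \sqrt{a^2I-B^2}\otimes\rho$, since $X$ \emph{commutes} with $\sqrt{a^2I-B^2}$ and the anticommutator picks up $2X\sqrt{a^2I-B^2}\otimes\rho$, which is nonzero in general (already for $d=1$, $A_1=0$, $a_1=1$, $X=I$, no admissible $M_1$ exists). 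So the literal ``$\bigoplus_{i=1}^{2^d}X$'' clause in the statement, and the assertion ``$X\oplus X$ anticommutes with $M_1$'' in the paper's base case, are off by a sign; the correct passenger is your $X\otimes\sigma^{\otimes d}$, i.e.\ a direct sum of $2^{d-1}$ copies each of $X$ and $-X$. This is harmless for every application — internally the passenger is $A_d$ itself and one only needs \emph{some} $\pm$-signed direct sum of copies of it to anticommute with the $L_j$, and in Theorem \ref{thm:cont_to_ACaj} the passengers already carry $\left(\begin{smallmatrix}1&0\\0&-1\end{smallmatrix}\right)^{\otimes 2(d-1)}$ factors — but you should state your conclusion as ``$X\otimes\left(\begin{smallmatrix}1&0\\0&-1\end{smallmatrix}\right)^{\otimes d}$ anticommutes with $M_1,\ldots,M_d$'' rather than presenting it as the asserted $\bigoplus_{i=1}^{2^d}X$, which is false as written. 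With that rephrasing your proof is complete and, modulo the sign correction, identical in substance to the paper's.
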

\begin{proof}
The case $d = 1$ follows from the Halmos dilation $M_ 1 = \begin{pmatrix} A_1 & \sqrt{a_1^2 I - A_1^2} \\ \sqrt{a_1^2 I - A_1^2} & -A_1 \end{pmatrix}$, as if $X$ anticommutes with $A_1$, then $X \oplus X$ anticommutes with $M_1$. Therefore, we may induct, so we assume the result holds for $d - 1$. If $A \in \cB(H)^d_{sa}$ is a tuple of pairwise anticommuting self-adjoint operators with $||A_j|| \leq a_j$, and we let $B = (A_1, \ldots, A_{d-1})$, then $B$ admits a dilation $L \in \cB(H \otimes \bC^{2^{d-1}})_{sa}$ with $L_j^2 = a_j^2 I$ and $L_j L_k = - L_k L_j$ for $j \not= k$. Moreover, we may choose $L_j$ such that if $X$ anticommutes with $A_1, \ldots, A_{d-1}$, then $\bigoplus\limits_{i=1}^{2^{d-1}} X$ anticommutes with $L_1, \ldots, L_{d-1}$. In particular, this applies to $A_d$, and we may define
\bes
M_j := \begin{pmatrix} L_j & 0 \\ 0 & -L_j \end{pmatrix}, \hspace{.05 in} 1 \leq j \leq d - 1, \hspace{.28 in} M_{d+1} := \begin{pmatrix} \bigoplus\limits_{i=1}^{2^{d-1}} A_{d} & \sqrt{a_{d}^2 I - \bigoplus\limits_{i=1}^{2^{d-1}} A_{d}^2} \\ \sqrt{a_{d}^2 I - \bigoplus\limits_{i=1}^{2^{d-1}} A_{d}^2} & -\bigoplus\limits_{i=1}^{2^{d-1}} A_{d} \end{pmatrix}.
\ees
It follows that $M_j^2 = a_j^2 I$ for all $j$, $M_j M_k = - M_k M_j$ if $j \not= k$, and if $X$ anticommutes with $A_1, \ldots, A_d$, then $\bigoplus\limits_{j=1}^{2^d} X$ anticommutes with $M_1, \ldots, M_d$.
\end{proof}

Finally, we may place a lower bound on the matrix range of the tuple $F^{[d]}$, which consists of (universal) pairwise anticommuting, self-adjoint unitaries.

\begin{corollary}\label{cor:cubeball}
If $c_1, \ldots, c_d \geq 0$ satisfy $\sum c_j^2 \leq 1$, then $\Wmax{}\left( \prod\limits_{j=1}^d [-c_j, c_j] \right) \subseteq \mathcal{W}(F^{[d]})$.
\end{corollary}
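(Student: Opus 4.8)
The plan is to make both sides of the inclusion fully concrete and then assemble the required dilation from Theorem~\ref{thm:cont_to_ACaj} and Proposition~\ref{prop:stepbystep_daybyday}. Working in the self-adjoint convention, $\Wmax{}\left(\prod_{j=1}^d[-c_j,c_j]\right)$ is exactly the set of $d$-tuples $T$ of self-adjoint matrices with $\|T_j\|\le c_j$ for every $j$, since $\mathcal{W}_1(T)\subseteq\prod_{j=1}^d[-c_j,c_j]$ is equivalent to $-c_jI\le T_j\le c_jI$ for each $j$, i.e.\ $\|T_j\|\le c_j$. On the other side, as recalled just before the statement, $T\in\mathcal{W}(F^{[d]})$ if and only if $T$ admits a dilation by a tuple $(A_1,\dots,A_d)$ of self-adjoint unitaries with $A_iA_j=-A_jA_i$ for $i\ne j$ (Stinespring factorization applied to the generators of $\mathcal{A}_d$). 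So it suffices to show: every self-adjoint tuple $T$ with $\|T_j\|\le c_j$ and $\sum_{j=1}^d c_j^2\le1$ dilates to pairwise anticommuting self-adjoint unitaries.

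I would handle first the case $c_j>0$ for all $j$. Set $X_j:=c_j^{-1}T_j$, a self-adjoint contraction, and $a_j:=c_j^{-1}$, so that $\sum_{j=1}^d a_j^{-2}=\sum_{j=1}^d c_j^2\le1$. Theorem~\ref{thm:cont_to_ACaj} then produces a self-adjoint dilation $A=(A_1,\dots,A_d)$ of $X$ that is pairwise anticommuting and satisfies $\|A_j\|\le a_j=c_j^{-1}$. Rescaling, $B_j:=c_jA_j$ is a self-adjoint dilation of $T_j=c_jX_j$ (multiplying a dilation by a scalar dilates the scaled operator, and preserves both self-adjointness and pairwise anticommutation), with $\|B_j\|\le1$. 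Applying Proposition~\ref{prop:stepbystep_daybyday} with all norm bounds taken equal to $1$, we obtain a dilation $M$ of $B$ consisting of pairwise anticommuting self-adjoint operators with $M_j^2=I$, i.e.\ self-adjoint unitaries. Composing the two dilating isometries shows $M$ dilates $T$, hence $T\in\mathcal{W}(F^{[d]})$, establishing the inclusion when all $c_j>0$.

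Finally I would dispose of the boundary cases in which some $c_j$ vanish, which is where a little care is needed because one cannot in general enlarge the box back to the positive case — this fails precisely when $\sum_{c_i>0}c_i^2=1$. If every $c_j=0$, then $\Wmax{}\left(\prod_{j=1}^d\{0\}\right)$ consists of the zero tuples $0_n$, and $0_n=\psi(F^{[d]})$ for the UCP map $\psi(A)=2^{1-d}\operatorname{tr}(A)\,I_n$, since each $F^{[d]}_j$ is a tensor product of trace-zero matrices and hence traceless. If some but not all $c_j$ vanish, reindex so that $c_1,\dots,c_k>0$ and $c_{k+1}=\dots=c_d=0$; then any $T\in\Wmax{}\left(\prod_{j=1}^d[-c_j,c_j]\right)$ has the form $(T',0,\dots,0)$ with $T'\in\Wmax{}\left(\prod_{j=1}^k[-c_j,c_j]\right)$ and $\sum_{j\le k}c_j^2\le1$, so by the case already handled $T'$ has a pairwise anticommuting self-adjoint unitary dilation $(A_1,\dots,A_k)$ on some space $\mathcal{K}$. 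To append a zero coordinate, pass to $\mathcal{K}\otimes\mathbb{C}^2$, replace each $A_i$ by $A_i\otimes\begin{pmatrix}1&0\\0&-1\end{pmatrix}$, and adjoin $I_{\mathcal{K}}\otimes\begin{pmatrix}0&1\\1&0\end{pmatrix}$; a direct check shows the resulting $k+1$ operators are pairwise anticommuting self-adjoint unitaries whose compression to $\mathcal{K}\otimes e_1$ is $(A_1,\dots,A_k,0)$. Iterating $d-k$ times yields a pairwise anticommuting self-adjoint unitary dilation of $T=(T',0,\dots,0)$, so $T\in\mathcal{W}(F^{[d]})$. The genuine mathematical content of the argument is entirely packaged in Theorem~\ref{thm:cont_to_ACaj} and Proposition~\ref{prop:stepbystep_daybyday}; given those, the assembly is routine, and the only subtle point is this treatment of the degenerate directions.
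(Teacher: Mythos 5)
Your argument is correct, and for the main case $c_j>0$ it coincides with the paper's proof: rescale to contractions, apply Theorem~\ref{thm:cont_to_ACaj} with $a_j=1/c_j$, upgrade to self-adjoint unitaries via Proposition~\ref{prop:stepbystep_daybyday}, and rescale back (whether you rescale before or after invoking the proposition is immaterial). The only genuine divergence is the treatment of degenerate directions: the paper dismisses the case of vanishing $c_j$ in one line, \lq\lq we may assume $c_j>0$, as $\mathcal{W}(F^{[d]})$ is closed,\rq\rq\hspace{0pt} whereas you construct an explicit anticommuting self-adjoint unitary dilation of $(T',0,\ldots,0)$ by tensoring with $\begin{pmatrix}1&0\\0&-1\end{pmatrix}$ and adjoining $I\otimes\begin{pmatrix}0&1\\1&0\end{pmatrix}$. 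Your construction is correct --- it is essentially the recursion defining $F^{[d]}$ itself --- and it has the virtue of keeping the whole proof dilation-theoretic and limit-free. However, your stated reason for needing it, namely that one cannot enlarge the box when $\sum_{c_i>0}c_i^2=1$, does not actually obstruct the paper's shorter route: one perturbs the \emph{tuple} rather than the box, replacing $T$ by $\sqrt{1-\ep}\,T$, which lies in $\Wmax{}$ of a box with all sides strictly positive and squared half-sides summing to at most $1$; letting $\ep\to 0$ and using closedness of the matrix range recovers $T\in\mathcal{W}(F^{[d]})$. So both treatments of the boundary case succeed; yours trades a one-line limiting argument for an explicit (and slightly longer) dilation.
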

\begin{proof}
We may assume $c_j > 0$, as $\mathcal{W}(F^{[d]})$ is closed. Given a tuple $X \in \Wmax{}\left( \prod\limits_{j=1}^d [-c_j, c_j] \right)$, it follows that $B := (\frac{1}{c_1} X_1, \ldots, \frac{1}{c_d} X_d)$ is a tuple of self-adjoint contractions. Since $\sum\limits_{j=1}^d \left( \frac{1}{c_j}\right)^{-2} = \sum\limits_{j=1}^d c_j^2 \leq 1$, Theorem \ref{thm:cont_to_ACaj} and Proposition \ref{prop:stepbystep_daybyday} shows that $B$ admits a dilation $M$ consisting of pairwise anticommuting self-adjoint operators with $M_j^2 = \frac{1}{c_j^2} I$ for each $j$. Rescaling by $c_j$ shows that $X$ admits a dilation $L$ consisting of pairwise anticommuting self-adjoint unitaries. This implies that $L_1, \ldots, L_d$ satisfy the relations of (\ref{eq:universal_AC}), and hence there is a unital $*$-homomorphism $F^{[d]}_j \mapsto L_j$. Composition with a compression shows that there is a UCP map $F^{[d]}_j \mapsto X_j$, and finally $X \in \mathcal{W}(F^{[d]})$.
\end{proof}

With (\ref{eq:ACrange=max2}) and Corollary \ref{cor:cubeball}, it is within the realm of possibility that (\ref{eq:ACrange=maxd}) holds in full generality. We conclude by noting that
\bes
\mathcal{A}_d := C^*(x_1, \ldots, x_d \hspace{4 pt} | \hspace{4 pt} x_i = x_i^*, \hspace{4 pt} x_i^2 = 1, \hspace{4 pt} x_i x_j = -x_j x_i \text{ for } i \not= j)
\ees
is but one of many universal $C^*$-algebras that produces potential scaled containments
\bes
\frac{1}{C} \cdot \Wmax{}(K) \subseteq \mathcal{W}( x_1, \ldots, x_d)
\ees
that may be posed as (noncommutative) dilation problems through Stinespring factorization. It would be of great interest to the author if, in addition to a resolution of (\ref{eq:ACrange=maxd}), there were a general method by which one could compute or bound the optimal scale $C$, depending on the universal $C^*$-algebra and the relationship between $K$ and $\mathcal{W}_1(x_1, \ldots, x_d)$.

%%%%%%%%%%%%%%%%%%%%%%%%%%%%

\section*{Acknowledgments}

\noindent I am grateful to Orr Shalit and Adam Dor-On for their comments, and to the referee for significant improvements  (especially regarding section \ref{sec:compmin} and alternative notions of minimality).

%%%%%%%%%%%%%%%%%%%%%%%%%%%%%%%%%%%%%%%%%%%%%%%%%%%%%%%%%%%%%%%%%%%%%%%%%%%%
%%%%%%%%%%%%%%%%%%%%%%%%%%%%%%%%%%%%%%%%%%

\end{document}